\numberwithin{equation}{section}
\newtheorem{Th}{Theorem}[section]
\newtheorem{Rem}[Th]{Remark}
\newtheorem{Lemma}[Th]{Lemma}
\newtheorem{Def}[Th]{Definition}
\newtheorem{Prop}[Th]{Proposition}
\newtheorem{Cor}[Th]{Corollary}
\renewcommand{\section}%
   {\setcounter{equation}{0}\@startsection {section}{1}{\z@}{-3.5ex plus -1ex
  minus -.2ex}{2.3ex plus .2ex}{\Large\bf}}
\def\id{\mathop{\rm id}\nolimits}
\def\ds{\displaystyle}
\def\R{\mathbb R}
\def\C{\mathbb C}
\def\N{\mathbb N}
\newcommand{\D}{\mathcal{D}}
\newcommand{\E}{\mathcal{E}}
\newcommand{\M}{\mathcal{M}}
\newcommand{\Sch}{\mathcal{S}}
\newcommand{\bfM}{\mathbf{M}}
\newcommand{\bfN}{\mathbf{N}}
\newcommand{\bfc}{\mathbf{c}}
\newcommand{\bfW}{\mathbf{W}}
\newcommand{\afrac}[2]{\genfrac{}{}{0pt}{1}{#1}{#2}}
\newcommand{\beqsn}{\arraycolsep1.5pt\begin{eqnarray*}}
\newcommand{\eeqsn}{\end{eqnarray*}\arraycolsep5pt}
\newcommand{\beqs}{\arraycolsep1.5pt\begin{eqnarray}}
\newcommand{\eeqs}{\end{eqnarray}\arraycolsep5pt}
\title{Nuclear global spaces of ultradifferentiable functions in the matrix weighted setting}
\author[Boiti]{Chiara Boiti}
\address{
Dipartimento di Matematica e Informatica \\Universit\`a di Ferrara\\
Via Ma\-chia\-vel\-li n.~30\\
I-44121 Ferrara\\
Italy}
\email{chiara.boiti@unife.it}
\author[Jornet]{David Jornet}
\address{
Instituto Universitario de Matem\'atica Pura y Aplicada IUMPA\\
Universitat Po\-li\-t\`ecni\-ca de Val\`encia\\
Camino de Vera, s/n\\
E-46071 Valencia\\
Spain}
\email{djornet@mat.upv.es}
\author[Oliaro]{Alessandro Oliaro}
\address{Dipartimento di Matematica\\ Universit\`a di Torino\\
 Via Carlo Alberto n.~10\\ I-10123 Torino\\ Italy}
 \email{alessandro.oliaro@unito.it}
\author[Schindl]{Gerhard Schindl}
\address{Fakult\"at f\"ur Mathematik\\ Universit\"at Wien\\
Oskar-Morgenstern-Platz n.~1\\ A-1090 Wien\\ Austria}
 \email{gerhard.schindl@univie.ac.at}
\begin{document}

\keywords{weight matrices, ultradifferentiable functions, sequence spaces, nuclear spaces}
\subjclass[2010]{Primary 46A04, 46A45; Secondary 26E10}

\begin{abstract}
We prove that the Hermite functions are an absolute Schauder basis for many global weighted spaces of ultradifferentiable functions in the matrix weighted setting and we determine also the corresponding coefficient spaces, thus extending previous work by Langenbruch.~As a consequence we give very general conditions for these spaces to be nuclear.~In particular, we obtain the corresponding results for spaces defined by weight functions.  
\end{abstract}

\maketitle

\newcount\minutes
 \newcount\hour
 \newcount\minutea
 \newcount\minuteb
 \minutes=\time
  \divide\minutes by 60
  \hour=\minutes
\minutes=\time
 \multiply \hour by 60
  \advance \minutes by -\hour
 \divide \hour by 60
 \minuteb=\minutes
 \divide\minuteb by 10
\minutea=\minuteb
\multiply \minuteb by 10
\advance \minutes by  -\minuteb
\minuteb=\minutes
%computing hour and minutes
% IL NUMERO DEI MINUTI E' ORA ESPRESSO DA DUE CIFRE:
% \number\minutea    PER LE DECINE DI MINUTI
% \numnber\minuteb    PER LE UNITA' DI MINUTI
% A QUESTO PUINTO DATA E ORA SI SCRIVONO COME SOTTO:
\date{\today \space \number\hour:\number\minutea \number\minuteb }
 \markboth{\today \space \number\hour:\number\minutea \number\minuteb }{\today \space \number\hour:\number\minutea \number\minuteb }

\markboth{\sc Nuclear global spaces of ultradifferentiable functions in the matrix weighted setting}
 {\sc C.~Boiti, D.~Jornet, A.~Oliaro and G.~Schindl}

\section{Introduction}

The systematic study of nuclear locally convex spaces began in 1951 with the fundamental dissertation of A.~Grothendieck~\cite{Gr} to classify those infinite dimensional locally convex spaces which are not normed, suitable for mathematical analysis. Among the properties of a nuclear space, the existence of a Schwartz kernel for a continuous linear operator on the space is of crucial importance for the theory of linear partial differential operators. In our setting of ultradifferentiable functions, this fact helps, for instance, to study the behaviour (propagation of singularities or wave front sets) of a differential or pseudodifferential operator when acting on a distribution. See, for example, \cite{AJ,BJO-Gabor,fgj1,fgj2,NR,RW} and the references therein.

Since the middle of the last century several authors have studied the topological structure of global spaces of ultradifferentiable functions and, in particular, when the spaces are nuclear. See \cite{Mi}, or the book \cite{GV}. More recently,  the first three authors in \cite{BJO-Rodino} used the isomorphism established by Langenbruch~\cite{L} between global spaces of ultradifferentiable functions in the sense of Gel'fand and Shilov~\cite{GS} and some sequence spaces to see that under the condition that appears in  \cite[Corollary 16(3)]{BMM} on the weight function $\omega$ (in the sense of \cite{BMT})  the space $\Sch_{(\omega)}(\R^d)$ of rapidly decreasing ultradifferentiable functions of Beurling type in the sense of Bj\"orck~\cite{Bj} is nuclear. However, there was the restriction that the powers of the logarithm were not allowed as admissible weight functions. Later, the authors of the present work  proved in \cite{BJOS} that $\Sch_{(\omega)}(\R^d)$ is nuclear for any weight function satisfying $\log(t)=O(\omega(t))$  and $\omega(t)=o(t)$ as $t$ tends to infinity. The techniques used in \cite{BJOS} come especially from the field of time-frequency analysis and a mixture of ideas from \cite{BJO-Gabor,G,GZ,RW}. In both \cite{BJO-Rodino} and \cite{BJOS} we use (different) isomorphisms between that space $\Sch_{(\omega)}(\R^d)$ and some sequence space and prove that $\Sch_{(\omega)}(\R^d)$ is nuclear by an application of the Grothendieck-Pietsch criterion~\cite[Theorem 28.15]{MV}. Very recently, Debrouwere, Neyt and Vindas~\cite{DNV1,DNV2} (cf.~\cite{Kru} for related results about local spaces), using different techniques have extended our previous results in a very general framework. In \cite{DNV1} they characterize when mixed spaces of Bj\"orck~\cite{Bj} of Beurling type or of Roumieu type are nuclear under very mild conditions on the weight functions. In \cite{DNV2}, using weight matrices in the sense of \cite{RS} the same authors characterize the nuclearity of generalized Gel'fand-Shilov classes which extend their previous work \cite{DNV1} and treat also many other mixed classes defined by sequences.

The aim of the present paper is twofold. On the one hand, we extend the work of Langenbruch~\cite{L} to the matrix weighted setting in the sense of \cite{RS,S-PhDtesi}. In particular, we prove that the Hermite functions are a Schauder basis of many global weighted spaces of ultradifferentiable functions. Moreover, we determine the coefficient spaces corresponding to this Hermite expansion (Theorem~\ref{th41G}). These results are applied to spaces defined by weight functions $\Sch_{[\omega]}(\R^d)$, being $[\omega]=(\omega)$ (Beurling setting) or $[\omega]=\{\omega\}$ (Roumieu setting). Hence, we extend part of the previous work of Aubry~\cite{A} to the several variables case. 
As a consequence we extend to a very general situation  our previous study \cite{BJO-Rodino,BJOS}  about the nuclearity of the space $\Sch_{(\omega)}(\R^d)$   to global spaces of ultradifferentiable functions defined by weight matrices (Corollary~\ref{lemma57Gend}). An application to particular matrices gives that $\Sch_{(\omega)}(\R^d)$ is nuclear when $\omega(t)=o(t^2)$ as $t$ tends to infinity. Similarly we also prove the analogous result for the Roumieu setting, namely that $\Sch_{\{\omega\}}(\R^d)$ is nuclear when $\omega(t)=O(t^2)$ as $t$ tends to infinity (see Corollary~\ref{cor56G} for both results). For weights of the form $\omega(t)=\log^\beta(1+t)$ with $\beta>1$ our results hold and, hence, we generalize the results of \cite{L} to spaces that could not be treated there since, as is easily deduced from \cite[Example 20]{BMM}, $\Sch_{[(M_p)_p]}(\R)\neq \Sch_{[\omega]}(\R)$ for any sequence of positive numbers $(M_p)_{p\in\N}$ in the sense of \cite{K} (see Remark~\ref{remarkseparating}). We do not treat here the classical case $\omega(t)=\log(1+t)$, for which $\Sch_{(\omega)}(\R)=\Sch(\R)$, the Schwartz class, because in this case infinitely many entries of our weight matrices are not well defined. However, the results presented here are already well known for the Schwartz class.

 The classes of functions treated in \cite{DNV2} are in general different from ours. In fact, here we consider spaces of functions $f$ that are bounded in the following sense: for some (or any) $h>0$, there is $C>0$ such that for all $x\in\R^d$ and every multi-indices $\alpha$ and $\beta$  we have  
 $$
 (A)\qquad\qquad |x^\alpha \partial^\beta f(x)|\le C h^{|\alpha+\beta|}M_{\alpha+\beta}.
 $$
 And we pass to the matrix setting for the multi-sequence $(M_\alpha)_\alpha$, i.e. we make $M_\alpha^\lambda$  depend also on a parameter $\lambda>0$ (see the precise definition in the next section).
 In \cite{DNV2}, the authors consider spaces of functions $f$ bounded in the following sense: there is $C>0$ such that for all $x\in\R^d$ and every multi-index $\beta$ they have
 $$
 (B)\qquad \qquad |w(x) \partial^\beta f(x)|\le C M_\beta,
 $$
 where $w$ is a positive continuous function. They pass to the matrix setting by making $M_\beta^\lambda$ and $w^\lambda$ depend on the same parameter $\lambda>0$. Hence, taking unions (Roumieu setting) or intersections (Beurling setting) in $\lambda$ in the situation $(A)$ gives different classes of functions than in the situation $(B)$ in general. 
  On the other hand, it is a very difficult problem to determine when the classes treated in this work are non-trivial, a question not considered in \cite{DNV1,DNV2}. We characterize in a very general way (Propositions~\ref{cor36G} and \ref{lemma37G}) when the Hermite functions are contained in our classes and this fact is closely related to classes being non-trivial. Indeed, we can deduce from our results that, in the Beurling setting, the space $\Sch_{(\omega)}(\R^d)$ contains the Hermite functions if and only if $\omega(t)=o(t^2)$ as $t$ tends to infinity (Corollary~\ref{remopiccolo}). However, it is not difficult to see from the uncertainty principle \cite[Theorem]{Hi} that $\Sch_{(\omega)}(\R^d)=\{0\}$ when $t^2=O(\omega(t))$ as $t$ tends to infinity. In the same way, in the Roumieu case the space $\Sch_{\{\omega\}}(\R^d)$ contains the Hermite functions if and only if $\omega(t)=O(t^2)$ as $t$ tends to infinity (Corollary~\ref{remopiccolo}), but again from \cite[Theorem]{Hi}  we can deduce $\Sch_{\{\omega\}}(\R^d)=\{0\}$ when $t^2=o(\omega(t))$ as $t$ tends to infinity. For more information on the uncertainty principle for $\Sch_*(\R^d)$ being $\ast=(\omega)$ or $\ast=\{\omega\}$ see the nice introduction to the paper of Aubry~\cite{A} and the references therein.  Moreover, our classes are well adapted for Fourier transform (Corollary~\ref{Fourier-iso}). We should also mention that throughout this paper we assume,  on the multi-sequence $(M_\alpha)_\alpha$, that $(M_{\alpha})^{1/|\alpha|}$ tends to infinity when  $|\alpha|$ tends to infinity, which is stronger than the condition $\inf_{\alpha\in \N^d_0}(M_\alpha/M_0)^{1/|\alpha|}>0$ considered in \cite{DNV2}. The reason is that it is not clear how the results read when the associated function is infinite (see Remark~\ref{omegafinite}).

The paper is organized as follows: in the next section we give some necessary definitions, in Section~\ref{sec2prime} we introduce the classes under study in the matrix weighted setting and establish the analogous conditions to \cite{L} to determine in Section~\ref{sec3} when the Hermite functions belong to our classes. In Section~\ref{sec4} we introduce the suitable matrix sequence spaces and prove that they are isomorphic to our classes, which is the fundamental tool to see that our spaces are nuclear. We finally apply these results to the particular case of spaces defined by weight functions in Section~\ref{sec5}.

\section{Preliminaries}
\label{sec2}

We briefly recall from \cite{K} those basic notions about sequences $\bfM=(M_p)_{p\in\N_0}$, for $\N_0:=\N\cup\{0\}$, that we need in what follows. A sequence $(M_p)_p$ is called {\em normalized} if $M_0=1$. For a normalized sequence
$\bfM=(M_p)_p$ the {\em associated function} is denoted by
\beqs
\label{assofunc}
\omega_{\bfM}(t)=\sup_{p\in \N_0}\log\frac{|t|^p}{M_p},\qquad t\in\R.
\eeqs
We say that $(M_p)_p$ satisfies the {\em logarithmic convexity} condition $(M1)$
of \cite{K} if
\beqs
\label{M1}
M_p^2\leq M_{p-1}M_{p+1},\qquad p\in\N.
\eeqs
The following lemma is well known (see Lemmas 2.0.6 and 2.0.4
of \cite{S-tesi} for a proof).
\begin{Lemma}
\label{lemmatrivial}
Let $(M_p)_{p\in\N_0}$ be a normalized sequence satisfying \eqref{M1}. Then
\begin{itemize}
\item[(a)]
$M_jM_k\leq M_{j+k}$ for all $j,k\in\N_0$;
\item[(b)]
$p\mapsto(M_p)^{1/p}$ is increasing;
\item[(c)]
$\liminf_{p\to+\infty}(M_p)^{1/p}>0$.
\end{itemize}
\end{Lemma}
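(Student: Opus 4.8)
The plan is to reduce everything to the single observation that condition~\eqref{M1} is exactly the statement that the quotient sequence $m_p := M_p/M_{p-1}$ is non-decreasing. Dividing $M_p^2 \le M_{p-1}M_{p+1}$ by $M_{p-1}M_p$ (all entries being positive) gives $m_p \le m_{p+1}$ for every $p \in \N$. Since the sequence is normalized, $M_0 = 1$, and telescoping yields the two basic identities
\[
M_k = \prod_{i=1}^{k} m_i, \qquad M_{j+k} = M_j \prod_{i=1}^{k} m_{j+i},
\]
valid for all $j,k\in\N_0$ (empty products being $1$, so the cases $j=0$ or $k=0$ are immediate). These identities, together with the monotonicity of $(m_p)_p$, will do all the work.

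For part (a) I would compare the two products above factor by factor: by monotonicity of $(m_p)_p$ we have $m_{j+i} \ge m_i$ for each $i$, hence $\prod_{i=1}^{k} m_{j+i} \ge \prod_{i=1}^{k} m_i = M_k$, and multiplying through by $M_j$ gives $M_{j+k} \ge M_j M_k$.

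For part (b) I would note that $(M_p)^{1/p} = \bigl(\prod_{i=1}^{p} m_i\bigr)^{1/p}$ is the geometric mean of the first $p$ terms of the non-decreasing sequence $(m_i)_i$. To prove this is increasing in $p$, I would raise the target inequality $(M_p)^{1/p} \le (M_{p+1})^{1/(p+1)}$ to the power $p(p+1)$ and use $M_{p+1} = M_p\, m_{p+1}$; the inequality then reduces to $\prod_{i=1}^{p} m_i \le m_{p+1}^{p}$, which is clear since every factor $m_i$ with $i \le p$ is bounded above by $m_{p+1}$.

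Finally, part (c) follows immediately from (b): the sequence $\bigl((M_p)^{1/p}\bigr)_p$ is non-decreasing, so $(M_p)^{1/p} \ge (M_1)^{1/1} = M_1 = m_1 > 0$ for all $p \ge 1$, whence $\liminf_{p\to+\infty}(M_p)^{1/p} \ge M_1 > 0$. There is no genuine obstacle here—the lemma is elementary, as its label suggests—and the only point requiring a little care is the bookkeeping of empty products and the positivity of the entries, both of which are handled by the normalization $M_0=1$.
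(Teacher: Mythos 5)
Your proof is correct and complete: the reduction of \eqref{M1} to monotonicity of the quotients $m_p=M_p/M_{p-1}$, the telescoping identities, and the geometric-mean argument for (b) are exactly the standard route. The paper itself gives no proof but defers to Lemmas 2.0.4 and 2.0.6 of the cited thesis \cite{S-tesi}, where the argument is essentially this same one, so there is nothing to add.
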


%\begin{proof}
%Properties $(a)$ and $(b)$ follow, respectively, from Lemmas 2.0.6 and 2.0.4
%of \cite{S-tesi}, while $(c)$ trivially follows from $(b)$.
%\end{proof}

From Lemma~\ref{lemmatrivial}(c) and \cite[Prop. 3.2]{K}, we have that a normalized sequence $\bfM=(M_p)_p$ satisfies \eqref{M1} if and only if
\beqs
\label{33k}
M_p=\sup_{t>0}\frac{t^p}{\exp\omega_{\bfM}(t)},\qquad p\in\N_0.
\eeqs

We say that $(M_p)_p$ satisfies the {\em stability under differential operators}
condition $(M2)'$ of \cite{K} if
\beqs
\label{M2'}
\exists A,H\geq1\ \forall p\in\N_0:\quad M_{p+1}\leq AH^pM_p,
\eeqs
and
$(M_p)_p$ satisfies the stronger {\em moderate growth}
condition $(M2)$ of \cite{K} if
\beqs
\label{M2}
\exists A\geq1\ \forall p,q\in\N_0:\quad M_{p+q}\leq A^{p+q}M_pM_q.
\eeqs
%\green{Just for me, to understand, since I knew the terminology of Komatsu "stability under ultra\-dif\-fe\-ren\-tial operators"...: Why do you call it "moderate growth" condition?}

%\begin{Lemma}
%\label{lemma22G}
%Let $\bfM=(M_p)_{p\in\N_0}$ be a normalized sequence satisfying \eqref{M1}. Then
%\eqref{M2'} is equivalent to the following property:
%\beqs
%\label{23G}
%\exists A,H\geq1,\ \forall k,t>0,\quad
%\omega_{\bfM}(t)+\log(t/A)\frac{\log k}{\log H}\leq
%\omega_{\bfM}(kt).
%\eeqs
%\end{Lemma}
%
%\begin{proof}
%If \eqref{M2'} is satisfied, then \eqref{23G} holds  by Lemma~\ref{lemmatrivial}(c)
%and \cite[Prop. 3.4]{K}. Conversely, assume \eqref{23G} for $k=H$, i.e.
%\beqsn
%\omega_{\bfM}(t)+\log t\leq\omega_{\bfM}(Ht)+\log A,\qquad t>0.
%\eeqsn
%By \eqref{33k}, \eqref{M2'} follows since, for all $p\in\N_0$, we have
%\beqsn
%M_p=&&\sup_{t>0}\frac{t^p}{\exp\omega_{\bfM}(t)}\geq\sup_{t>0}
%\frac{t^{p+1}}{A\exp\omega_{\bfM}(Ht)}\\
%=&&\frac 1A\sup_{s>0}\frac{(s/H)^{p+1}}{\exp\omega_{\bfM}(s)}=\frac{1}{AH^{p+1}}
%M_{p+1}.
%\eeqsn
%\end{proof}
%
%\green{We never use the above Lemma~\ref{lemma22G}. Eliminate it?}

%It will be sometimes useful to compare two sequences $\bfM=(M_p)_{p\in\N_0}$
%and $\bfN=(N_p)_{p\in\N_0}$.
%In particular, we shall need the following condition,
%that generalizes condition (1.2) of \cite{L}:
%\beqs
%\label{12LR}
%\exists H,C,B>0\ \mbox{s.t.}\qquad
%p^{p/2}M_q\leq BC^pH^{p+q}N_{p+q},\quad\forall p,q\in\N_0.
%\eeqs
The following lemma extends \cite[Proposition 3.4]{K} for two sequences. We give the proof for the convenience of the reader.
\begin{Lemma}
\label{lemma23G}
Let $\bfM=(M_p)_{p\in\N_0}$
and $\bfN=(N_p)_{p\in\N_0}$ be two normalized sequences satisfying \eqref{M1}.
Then the following conditions are equivalent:
\begin{itemize}
\item[(i)]
$\exists A\geq1\ \forall p\in\N_0: \quad M_{p+1}\leq A^{p+1}N_p$.
\item[(ii)]
$\exists A\geq1, B>0\ \forall t>0: \quad\omega_{\bfN}(t)+\log t\leq\omega_{\bfM}(At)+B$.
\end{itemize}
\end{Lemma}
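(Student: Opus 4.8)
The plan is to exploit the Legendre-type duality between a normalized log-convex sequence and its associated function. By definition $\omega_{\bfM}(t)=\sup_{p\in\N_0}\big(p\log|t|-\log M_p\big)$, and conversely, since both sequences are normalized and satisfy \eqref{M1}, the inversion formula \eqref{33k} reads $\log M_p=\sup_{t>0}\big(p\log t-\omega_{\bfM}(t)\big)$. Both implications then reduce to manipulating these suprema, the recurrent bookkeeping device being that adding $\log t$ to $\omega_{\bfN}(t)$ shifts the summation index by one, which matches precisely the shift between $M_{p+1}$ and $N_p$ in $(i)$. For $(i)\Rightarrow(ii)$ I would argue straight from the definition of the associated function; for $(ii)\Rightarrow(i)$ I would invoke the inversion \eqref{33k}.

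For $(i)\Rightarrow(ii)$, assume $M_{p+1}\le A^{p+1}N_p$, equivalently $-\log N_p\le-\log M_{p+1}+(p+1)\log A$. Expanding $\omega_{\bfN}(t)+\log t$, inserting this estimate, and reindexing by $q=p+1$, I would obtain
\begin{align*}
\omega_{\bfN}(t)+\log t=\sup_{p\in\N_0}\big((p+1)\log t-\log N_p\big)&\le\sup_{p\in\N_0}\big((p+1)\log(At)-\log M_{p+1}\big)\\
&=\sup_{q\ge1}\big(q\log(At)-\log M_q\big)\le\omega_{\bfM}(At),
\end{align*}
the last step enlarging the supremum to all $q\in\N_0$. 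This already yields $(ii)$ with the same $A$ and any $B>0$.

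For $(ii)\Rightarrow(i)$, assume $\omega_{\bfN}(t)+\log t\le\omega_{\bfM}(At)+B$; the substitution $s=At$ rewrites this as the lower bound $\omega_{\bfM}(s)\ge\omega_{\bfN}(s/A)+\log s-\log A-B$. I would then apply \eqref{33k} to $\bfM$, namely $\log M_{p+1}=\sup_{s>0}\big((p+1)\log s-\omega_{\bfM}(s)\big)$, feed in this lower bound and change variables $r=s/A$; one factor $\log s$ cancels, and recognizing $\sup_{r>0}\big(p\log r-\omega_{\bfN}(r)\big)=\log N_p$ through \eqref{33k} for $\bfN$, I arrive at
\[
\log M_{p+1}\le\log N_p+(p+1)\log A+B.
\]
Since $B>0$ gives $e^B\le(e^B)^{p+1}$ for every $p\in\N_0$, this reads $M_{p+1}\le(e^BA)^{p+1}N_p$, so $(i)$ holds with the constant $A':=e^BA\ge1$.

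I expect the only delicate points to be the index shift $p\mapsto p+1$ and the careful tracking of the $\log t$ and $\log A$ terms through the change of variables. The genuine content is the inversion \eqref{33k}; once it is available, both directions are purely computational and present no analytic obstacle.
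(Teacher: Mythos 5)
Your proof is correct and follows essentially the same route as the paper's: the direction $(i)\Rightarrow(ii)$ is a direct estimate from the definition of the associated function with the index shift $p\mapsto p+1$, and $(ii)\Rightarrow(i)$ invokes the inversion formula \eqref{33k} with the change of variables $s=At$ — you merely work additively with logarithms where the paper works multiplicatively with quotients. Your explicit final step absorbing $e^B$ into the geometric constant via $e^B\leq(e^B)^{p+1}$ is a small point the paper leaves implicit, but the arguments are otherwise identical.
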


\begin{proof}
If $(i)$ is satisfied, then, for all $t>0$,
\beqsn
te^{\omega_\bfN(t)}=t\sup_{p\in\N_0}\frac{t^p}{N_p}\leq
\sup_{p\in\N_0}\frac{(At)^{p+1}}{M_{p+1}}
\leq\sup_{p\in\N_0}\frac{(At)^p}{M_p}=e^{\omega_\bfM(At)}.
\eeqsn
Conversely, if $(ii)$ holds, then, by \eqref{33k},
\beqsn
N_p=&&\sup_{t>0}\frac{t^p}{\exp\omega_\bfN(t)}\geq\sup_{t>0}
\frac{t^{p+1}}{e^B\exp\omega_\bfM(At)}\\
=&&e^{-B}\sup_{s>0}\frac{(s/A)^{p+1}}{\exp\omega_\bfM(s)}
=\frac{e^{-B}}{A^{p+1}}M_{p+1}.
\eeqsn
\end{proof}
Now, we consider sequences $\bfM=(M_\alpha)_{\alpha\in\N_0^d}$ of
positive real numbers for multi-indices $\alpha\in\N_0^d$. 
As in the one-dimensional case, we say that $(M_\alpha)_{\alpha\in\N_0^d}$ is normalized if $M_0=1$. %(w.l.o.g. from now on) and that it is logarithmic convex if
%\beqs
%\label{12G}
%M_\alpha^2\leq M_{\alpha-e_i}M_{\alpha+e_i},
%\quad\mbox{ for }\alpha\in\N_0^d\ \text{such\ that}\ \alpha-e_i\in\N_0^d,1\leq i\leq d,
%\eeqs
%where $e_i$ is the $i$-th vector of the canonical basis of $\R^d$ (we use it in Lemma~\ref{fromRS}(ii)).
%\green{We never use \eqref{12G}. Eliminate it?} {\color{red} I would leave it, we use it implicitly in Lemma~\ref{fromRS}(ii)}
We recall condition (3.7) of \cite{L}
\beqs
\label{37L}
\exists A\geq1\ \forall\alpha,\beta\in\N_0^d: \quad M_\alpha
M_\beta\leq A^{|\alpha+\beta|}M_{\alpha+\beta}.
\eeqs
Condition \eqref{M2'} takes in this setting the form (see \cite[$(2.1)$]{L})
\beqs
\label{M2'aniso}
\exists A\geq1\ \forall \alpha\in\N_0^d, 1\leq i\leq d:\quad M_{\alpha+e_j}\leq A^{|\alpha|+1}M_{\alpha},
\eeqs
and \eqref{M2} turns into
\beqs
\label{M2aniso}
\exists A\geq1\ \forall \alpha,\beta\in\N_0^d: \quad M_{\alpha+\beta}\leq A^{|\alpha+\beta|}M_{\alpha}M_{\beta}.
\eeqs
Now, for $t\in\R^d$, we denote
\beqs
\label{4}
\N_{0,t}^d:=\{\alpha\in\N^d_0 : \alpha_j=0\ \text{if}\ t_j=0,\ j=1,\dots,d\}.
\eeqs
The {\em associated weight function} of a normalized  $\bfM=(M_\alpha)_{\alpha\in\N_0^d}$ is given by
\beqsn
\omega_\bfM(t)=\sup_{\alpha\in\N_{0,t}^d}\log\frac{|t^\alpha|}{M_\alpha},\;\;\;t\in\R^d,
\eeqsn
where by convention $0^0:=1$. Note that for a normalized sequence we have $\omega_{\bfM}(0)=0$.

%\noindent\fbox{\begin{minipage}{170mm}Given $t=(t_1,\dots,t_d)\in\R^d$ such that $t_{j_1}=\dots=t_{j_l}=0$, $1\le l\le d$, we have $\frac{|t^\alpha|}{M_\alpha}=0$ for all $\alpha\in\N_0^d$ satisfying that for at least one $j_k$, $1\le k\le l$, one has $\alpha_{j_k}\ge 1$. If $\alpha_{j_1}=\dots=\alpha_{j_l}=0$, then $\frac{|t^\alpha|}{M_\alpha}=\frac{|s^{\alpha'}|}{M_{\alpha'}}$ with $s\in\R^{d-l}$ resp. $\alpha'\in\N_0^{d-l}$ consisting of all non-vanishing coordinates of $t$ resp. $\alpha$.
%And the restriction of $\omega_{\bfM}$ onto any coordinate axis yields a one dimensional associated weight function as defined in \eqref{assofunc}; more precisely by denoting $\bfM^i:=(M_{pe_i})_{p\in\N_0}$ we get $\omega_{\bfM}(xe_i)=\omega_{\bfM^i}(x)$ for all $x\in\R$ and $1\le i\le d$.
%\end{minipage}}
%
%\green{I still have not understood the comparison with the 1-dimensional sequences, because
%$M_{(0,\ldots,0,\alpha_j,0,\ldots,0)}$ may be different from $M_{\alpha_j}$. But do we need the above boxed comments?}

 \begin{Rem}\label{omegafinite}{\rm 
As it has already been pointed out in the geometric construction in \cite[Chap.~I]{MA} for the one dimensional weight function (see \eqref{assofunc}), we have that $\omega_\bfM(t)<+\infty$ for all $t\in\R^d$ if and only if $\lim_{|\alpha|\rightarrow\infty}(M_{\alpha})^{1/|\alpha|}=+\infty$.

%\green{Is there a precise reference to this in \cite[Chap.~I]{MA}? Because in the proof below the first implication is clear, but I don't understand the proof of the second implication, in the sense that $|t^\alpha|/M_\alpha\leq C$ does not imply that $\log |t^\alpha|/M_\alpha|$ is bounded (we should also prove that $|t^\alpha|/M_\alpha$ is far from 0):} 

First, assume that $\omega_\bfM(t)<+\infty$ for all $t\in\R^d$. Hence for all $t=(t_1,\dots,t_d)\in\R^d$ satisfying $t_{\min}:=\min_{1\le j\le d}|t_j|\ge 1$ we have that there exists some $C$ (depending only on $t$) such that $\log\frac{|t^\alpha|}{M_\alpha}\le C$ for all $\alpha\in\N_0^d$. So $t_{\min}^{|\alpha|}\le|t_1^{\alpha_1}\cdots t_d^{\alpha_d}|=|t^{\alpha}|\le e^CM_{\alpha}$ for all $\alpha\in\N_0^d$ and now let $t_{\min}\rightarrow+\infty$.

Conversely, let $\lim_{|\alpha|\rightarrow\infty}(M_{\alpha})^{1/|\alpha|}=\infty$ and so for any $A>0$ large, we can find some $C>0$ large enough such that $A^{|\alpha|}\le CM_{\alpha}$. Since $|t^{\alpha}|\le|t|^{|\alpha|}$ for all $t\in\R^d$ and $\alpha\in\N_0^d$ we see that for any given $t\in\R^d$ we get $\frac{|t^\alpha|}{M_\alpha}\le\frac{|t|^{|\alpha|}}{M_{\alpha}}\le C$ for some $C>0$ and all $\alpha\in\N_0^d$.}
\end{Rem}

%\noindent\fbox{\begin{minipage}{170mm}
%Using the notation $\bfM^i$ from above for any given $\bfM=(M_\alpha)_{\alpha\in\N_0^d}$ with \eqref{37L} and \eqref{M2aniso} one obtains
%\beqs
%\label{factorizing}
%\exists\;C>B\ge 1\;\forall\;\alpha=(\alpha_1,\dots,\alpha_d)\in\N_0^d:\;\;\;M_{\alpha}\le B^{|\alpha|}M^1_{\alpha_1}\cdots M^d_{\alpha_d}\le C^{|\alpha|}M_{\alpha}.
%\eeqs
%This can be seen by $d$ iterated applications of \eqref{M2aniso}: writing $\alpha=\sum_{i=1}^d\alpha_ie_i$  we get $M_{\alpha}=M_{\sum_{i=1}^d\alpha_ie_i}\le A^{|\alpha|}M_{\alpha_1e_1}M_{\sum_{i=2}^d\alpha_ie_i}\le\dots\le A^{d|\alpha|}M_{\alpha_1e_1}\cdots M_{\alpha_de_d}=A^{d|\alpha|}M^1_{\alpha_1}\cdots M^d_{\alpha_d}$, take $B:=A^d$ and finally apply \eqref{37L}.\end{minipage}}
%
%\green{Again, I would avoid the notation $\bfM^i$; it seems to me more clear to leave $M_{\alpha_je_j}$.
%But do we need the above boxed remarks?}

%We have the following
\begin{Lemma}
\label{lemma2}
Let $\bfM=(M_\alpha)_{\alpha\in\N_0^d}$. Then, for all
$h>0$ and $\alpha\in\N_0^d$,
\beqs
\label{13G}
M_\alpha h^{|\alpha|}\geq\sup_{t\in\R^d}|t^\alpha e^{-\omega_\bfM(t/h)}|.
\eeqs
\end{Lemma}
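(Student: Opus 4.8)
The plan is to reduce \eqref{13G} to the single scale-invariant pointwise estimate
\begin{equation*}
|s^\alpha|\le M_\alpha\, e^{\omega_\bfM(s)},\qquad s\in\R^d,\ \alpha\in\N_0^d, \tag{$\ast$}
\end{equation*}
which is essentially a restatement of the definition of $\omega_\bfM$. First I would perform the substitution $t=hs$ in the supremum on the right-hand side of \eqref{13G}. Since $h>0$ is a scalar we have $|t^\alpha|=h^{|\alpha|}|s^\alpha|$ and $t/h=s$, so $|t^\alpha e^{-\omega_\bfM(t/h)}|=h^{|\alpha|}|s^\alpha|e^{-\omega_\bfM(s)}$; as $t\mapsto t/h$ is a bijection of $\R^d$, proving \eqref{13G} amounts to showing $\sup_{s\in\R^d}|s^\alpha|e^{-\omega_\bfM(s)}\le M_\alpha$, which is exactly $(\ast)$.

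To establish $(\ast)$ I would fix $s\in\R^d$ and $\alpha\in\N_0^d$ and distinguish two cases according to whether $\alpha$ lies in the index set $\N_{0,s}^d$ introduced in \eqref{4}. If $\alpha\in\N_{0,s}^d$, then by the very definition of the associated weight function, $\omega_\bfM(s)=\sup_{\beta\in\N_{0,s}^d}\log(|s^\beta|/M_\beta)\ge\log(|s^\alpha|/M_\alpha)$, and exponentiating yields $(\ast)$ at once. If instead $\alpha\notin\N_{0,s}^d$, then there is an index $j$ with $s_j=0$ but $\alpha_j>0$; the corresponding factor $s_j^{\alpha_j}$ vanishes (the convention $0^0:=1$ plays no role here, since $\alpha_j>0$), whence $s^\alpha=0$ and $(\ast)$ holds trivially because its right-hand side is nonnegative. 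Taking the supremum over $s$ and multiplying by $h^{|\alpha|}$ then gives \eqref{13G}.

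I expect no serious obstacle: the estimate follows directly from the definition of $\omega_\bfM$ once the rescaling $t=hs$ has been carried out, and no growth or convexity hypothesis on $\bfM$ (such as \eqref{M1} or \eqref{M2}) is required. The only point demanding a little care is that the supremum defining $\omega_\bfM$ runs over $\N_{0,s}^d$ rather than over all of $\N_0^d$; this is precisely why the second case is needed, and it is exactly there that one verifies that the multi-indices excluded from $\N_{0,s}^d$ contribute a vanishing monomial and hence cannot violate $(\ast)$.
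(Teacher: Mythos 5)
Your proof is correct and takes essentially the same approach as the paper's: after your harmless rescaling $t=hs$, the case distinction $\alpha\in\N_{0,s}^d$ versus $\alpha\notin\N_{0,s}^d$ coincides with the paper's split $t\in\R^d_\alpha$ versus $t\in\R^d\setminus\R^d_\alpha$, and in both arguments the key step is choosing $\beta=\alpha$ in the supremum defining the associated weight function.
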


\begin{proof}
Fix $\alpha\in\N^d_0$ and $h>0$; we write $\R^d_\alpha:=\{t\in\R^d : t_j\neq 0\ \text{for}\ \alpha_j\neq 0,\ j=1,\dots,d\}$. Then for $t\in\R^d\setminus \R^d_\alpha$ we have $t^\alpha=0$, and so it is enough to prove that
\beqs
\label{44}
M_\alpha h^\alpha\geq \sup_{t\in\R^d_\alpha} |t^\alpha e^{-\omega_\bfM(t/h)}|.
\eeqs
We have
\beqsn
\frac{1}{\ds\sup_{t\in\R^d_\alpha}|t^\alpha e^{-\omega_\bfM(t/h)}|}=&&\inf_{t\in\R^d_\alpha}
\frac{e^{\omega_\bfM(t/h)}}{|t^\alpha|}
=\inf_{t\in\R^d_\alpha}\frac{\exp{\ds\sup_{\beta\in\N_{0,t}^d}\log\frac{\big|\left(\frac th\right)^\beta
\big|}{M_\beta}}}{|t^\alpha|}=\inf_{t\in\R^d_\alpha}
\frac{1}{|t^\alpha|}\sup_{\beta\in\N_{0,t}^d}\frac{\big|\left(\frac th\right)^\beta\big|}{
M_\beta};
\eeqsn
observe that $\alpha\in \N^d_{0,t}$ and so, choosing $\beta=\alpha$, we get
\beqsn
\frac{1}{\ds\sup_{t\in\R^d_\alpha}|t^\alpha e^{-\omega_\bfM(t/h)}|}\geq\inf_{t\in\R^d_\alpha}\frac{|t^\alpha|}
{|t^\alpha| h^{|\alpha|}M_\alpha}
=\frac{1}{h^{|\alpha|}M_\alpha},
\eeqsn
which proves \eqref{44}, and then the proof is complete.

Note that if $\omega_\bfM(t/h)=+\infty$, then \eqref{13G} is clear and so we could restrict in the estimates above to all $t\in\R^d_\alpha$ such that $\omega_\bfM(t/h)$ is finite.
\end{proof}

In the following we use two normalized sequences as above  $\bfM=(M_\alpha)_{\alpha\in\N_0^d}$
and $\bfN=(N_\alpha)_{\alpha\in\N_0^d}$ and we compare them  in the sense:
\beqsn
\bfM\leq\bfN\quad\mbox{if}\quad M_\alpha\leq N_\alpha, \quad \alpha\in\N_0^d.
\eeqsn
This clearly implies 
\beqsn
\omega_\bfN(t)\leq\omega_\bfM(t),\quad t\in\R^d.
\eeqsn
In~\cite{L}, Langenbruch uses his condition (1.2) to prove that the Hermite functions belong to the spaces considered there. In the present paper we need, for the same reason, a mixed condition that involves two sequences:
\beqs
\label{12L1R}
\exists H,C,B>0\ \forall \alpha,\beta\in\N_0^d:\quad
\alpha^{\alpha/2}M_\beta\leq BC^{|\alpha|}H^{|\alpha+\beta|}N_{\alpha+\beta}.
\eeqs

\begin{Rem}\label{omegafinite1}
\begin{em}
Condition \eqref{12L1R} yields that $\lim_{|\alpha|\rightarrow\infty}(N_{\alpha})^{1/|\alpha|}=+\infty$. 
Indeed, since by convention $0^0=1$ and by definition  $\alpha^{\alpha/2}:=\alpha_1^{\alpha_1/2}\cdots\alpha_d^{\alpha_d/2}$, from \eqref{12L1R} with 
$\beta=0$ we get, for $|\alpha|_\infty:=\max_{1\leq j\leq d}\alpha_j$,
%and remark that if $|\alpha|\rightarrow\infty$ then at least one coordinate $\alpha_{j_0}\rightarrow\infty$. If only this coordinate tends to infinity, then by using $0^0:=1$, we estimate $\alpha^{\alpha/2}=\alpha_1^{\alpha_1/2}\cdots\alpha_d^{\alpha_d/2}\ge\alpha_{j_0}^{\alpha_{j_0}/2}$ and $|\alpha|\le c+\alpha_{j_0}$ for some $c\in\N_0$ since none other coordinate is tending to infinity. If more coordinates tend to infinity, we can argue similarly.
\beqsn
N_\alpha^{1/|\alpha|}\geq&& B^{-\frac{1}{|\alpha|}}C^{-1}H^{-1}(\alpha^{\alpha/2})^{1/|\alpha|}
=B^{-\frac{1}{|\alpha|}}C^{-1}H^{-1}(\alpha_1^{\alpha_1/2}\cdots\alpha_d^{\alpha_d/2})^{1/|\alpha|}\\
\geq&&B^{-\frac{1}{|\alpha|}}C^{-1}H^{-1}(|\alpha|_\infty^{|\alpha|_\infty/2})^{1/|\alpha|}
\geq B^{-\frac{1}{|\alpha|}}C^{-1}H^{-1}\left(\frac{|\alpha|}{d}\right)^{\frac{1}{2d}}\to+\infty.
\eeqsn
\end{em}
\end{Rem}
\section{Global ultradifferentiable functions in the matrix weighted setting}
\label{sec2prime}
In this section we consider matrices of normalized sequences $(M^{(\lambda)}_\alpha)_{\lambda>0,\alpha\in\N_0^d}$ of real positive numbers:
\beqs
\label{defcalM}
\qquad \M:=\{(\bfM^{(\lambda)})_{\lambda>0}:\ \bfM^{(\lambda)}=
(M^{(\lambda)}_\alpha)_{\alpha\in\N_0^d},\
M^{(\lambda)}_0=1, \
\bfM^{(\lambda)}\leq\bfM^{(\kappa)}\,\mbox{for all}\,0<\lambda\leq\kappa\}.
\eeqs
We call $\M$ a {\em weight matrix} and consider matrix
weighted global ultradifferentiable functions of Roumieu type defined as follows
(from now on $\|\cdot\|_\infty$ denotes the 
supremum norm):
\beqsn
\Sch_{\{\bfM\}}:=&&\Big\{f\in C^\infty(\R^d):\  \exists C,h>0, \ \
\|f\|_{\infty,\bfM,h}:=\sup_{\alpha,\beta\in\N_0^d}
\frac{\|x^\alpha\partial^\beta f\|_\infty}{h^{|\alpha+\beta|}M_{\alpha+\beta}}\leq C\Big\},\\
\Sch_{\{\M\}}:=&&
\bigcup_{\lambda>0}\Sch_{\{\bfM^{(\lambda)}\}}
=\{f\in C^\infty(\R^d):\  \exists C,h,\lambda>0,\ \
\|f\|_{\infty,\bfM^{(\lambda)},h}\leq C\},
\eeqsn
endowed with the inductive limit topology (which may be thought countable if we take $\lambda,h\in\N$). For the Beurling setting, similarly we put:
\beqsn
\Sch_{(\bfM)}:=&&\{f\in C^\infty(\R^d):\  \forall h>0\ \exists C_h>0, \ \
\|f\|_{\infty,\bfM,h}\leq C_h\},\\
\Sch_{(\M)}:=&&
\bigcap_{\lambda>0}\Sch_{(\bfM^{(\lambda)})}
=\{f\in C^\infty(\R^d):\  \forall h,\lambda>0\ \exists C_{\lambda,h}>0,\ \
\|f\|_{\infty,\bfM^{(\lambda)},h}\leq C_{\lambda,h}\},
\eeqsn
endowed with the projective limit topology (countable for $\lambda^{-1},h^{-1}\in\N$).

%It will be sometimes useful to compare two weight matrices
%$\M=(M^{(\lambda)}_\alpha)_{\lambda>0,\alpha\in\N_0^d}$
%and $\mathcal N=(N^{(\lambda)}_\alpha)_{\lambda>0,\alpha\in\N_0^d}$.
%
%\blue{The previous sentence perhaps is not referred to this part of the paper, it seems that we are not comparing here two different matrices?}

Now we consider different conditions on the weight matrices that we use following the lines of \cite{L}. The next basic condition extends (1.2) of \cite{L} in the Roumieu case and is needed to show that the Hermite functions belong to $\Sch_{\{\M\}}$ (see Proposition~\ref{lemma37G}):
%Let us now generalize conditions \eqref{12LR} and \eqref{M2'} to a weight matrix $\M$
%as defined in \eqref{defcalM} in the Roumieu case
\beqs
\label{12L2R}
\qquad\forall\lambda>0\ \exists\;\kappa\geq\lambda, B,C,H>0\ \forall\alpha,\beta\in\N_0^d: \qquad
\alpha^{\alpha/2}M^{(\lambda)}_\beta\leq BC^{|\alpha|}H^{|\alpha+\beta|}
M^{(\kappa)}_{\alpha+\beta}.
\eeqs
The analogous condition to \eqref{12L2R} in the Beurling case, which is needed to show that the Hermite functions belong to $\Sch_{(\M)}$ is the following (see Proposition~\ref{lemma37G}):
\begin{equation}
\label{12L2B}
\begin{split}
&\forall\;\lambda>0\ \exists\;0<\kappa\leq\lambda, H>0\ \forall C>0\ \exists B>0\  \forall\alpha,\beta\in\N_0^d:\\
&\alpha^{\alpha/2}M^{(\kappa)}_\beta\leq BC^{|\alpha|}H^{|\alpha+\beta|}
M^{(\lambda)}_{\alpha+\beta}.
\end{split}
\end{equation}

\begin{Rem}\label{omegafinite2}
\begin{em}
Similarly, as commented in Remark \ref{omegafinite1} for \eqref{12L1R}, property \eqref{12L2R} (property \eqref{12L2B}) yields that $\lim_{|\alpha|\rightarrow\infty}(M^{(\kappa)}_{\alpha})^{1/|\alpha|}=+\infty$ for some $\kappa>0$, and hence for all $\kappa'\ge\kappa$ ($\lim_{|\alpha|\rightarrow\infty}(M^{(\lambda)}_{\alpha})^{1/|\alpha|}=+\infty$ for all $\lambda>0$).
\end{em}
\end{Rem}

We also need to extend condition (3.7) of \cite{L} to the matrix weighted setting. First, we state it in the Roumieu case:
\beqs
\label{37LR}
\forall\;\lambda>0\ \exists\;\kappa\geq\lambda, A\geq1\ \forall\alpha,\beta\in\N_0^d:\ \ M^{(\lambda)}_\alpha
M^{(\lambda)}_\beta\leq A^{|\alpha+\beta|}M^{(\kappa)}_{\alpha+\beta};
\eeqs
and in the Beurling case:
\beqs
\label{37LB}
\forall\;\lambda>0\ \exists\;0<\kappa\leq\lambda, A\geq1\ \forall\alpha,\beta\in\N_0^d:\ \  M^{(\kappa)}_\alpha
M^{(\kappa)}_\beta\leq A^{|\alpha+\beta|}M^{(\lambda)}_{\alpha+\beta}.
\eeqs

The extensions of condition \eqref{M2'aniso} ({\itshape mixed derivation closedness properties}) for a weight matrix $\M$ in the Roumieu and Beurling cases read as follows:
\beqs
\label{M2'R}
&&\forall\;\lambda>0\ \exists\;\kappa\geq\lambda,A\geq1\  \forall\alpha\in\N_0^d,1\leq j\leq d:\ \ 
M^{(\lambda)}_{\alpha+e_j}\leq A^{|\alpha|+1}M^{(\kappa)}_\alpha,\\
\label{M2'B}
&&\forall\;\lambda>0\ \exists\;0<\kappa\leq\lambda,A\geq1\ \forall\alpha\in\N_0^d,1\leq j\leq d:\ \  
M^{(\kappa)}_{\alpha+e_j}\leq A^{|\alpha|+1}M^{(\lambda)}_\alpha.
\eeqs

The following conditions generalize \eqref{M2aniso} to the weight matrix setting:
\beqs
\label{M2R}
&&\forall\;\lambda>0\ \exists\;\kappa\geq\lambda,A\geq1\ \forall\alpha,\beta\in\N_0^d:\ \ 
M^{(\lambda)}_{\alpha+\beta}\leq A^{|\alpha+\beta|}M^{(\kappa)}_\alpha M^{(\kappa)}_\beta,\\
\label{M2B}
&&\forall\;\lambda>0\ \exists\;0<\kappa\leq\lambda,A\geq1\ \forall\alpha,\beta\in\N_0^d:\ \ 
M^{(\kappa)}_{\alpha+\beta}\leq A^{|\alpha+\beta|}M^{(\lambda)}_\alpha M^{(\lambda)}_\beta.
\eeqs

It is immediate that for any given matrix $\M$ satisfying \eqref{M2R} and \eqref{37LR} we can replace in the definition of $\Sch_{\{\M\}}$ the seminorm $\|\cdot\|_{\infty,\bfM^{(\lambda)},h}$ by 
$$\sup_{\alpha,\beta\in\N_0^d}
\frac{\|x^\alpha\partial^\beta f\|_\infty}{h^{|\alpha+\beta|}M^{(\lambda)}_{\alpha}M^{(\lambda)}_{\beta}}.$$ We have an analogous statement for the class $\Sch_{(\M)}$ under \eqref{M2B} and \eqref{37LB}. When we define the spaces $\Sch_{\{\M\}}$ or $\Sch_{(\M)}$ with the weighted $L^2$ norms treated below  in \eqref{l2norms}, the similar property holds. 

%\noindent\fbox{\begin{minipage}{170mm}
%Moreover, one is able to see that \eqref{factorizing} can be generalized to this setting, hence $\Sch_{\{\M\}}$ resp. $\Sch_{(\M)}$ can be equivalently defined using products of sequences $(\bfM^{(\lambda)})^i:=(M^{(\lambda)}_{pe_i})_{p\in\N_0}$, $1\le i\le d$.
%\end{minipage}}
%
%\green{Still my problem in understanding this part. Do we need it?}

\begin{Lemma}
\label{lemma21G}
Let $\M$ be a weight matrix as defined in \eqref{defcalM}.

If \eqref{M2'R} holds, then
\begin{equation}
\label{22LR}
\begin{split}
&\forall\;\lambda>0\ \exists\;\kappa\geq\lambda,B_1,B_2\geq1\ \forall t\in\R^d:\\
&(1+|t|)^{2(d+1)}\exp \omega_{\bfM^{(\kappa)}}(t)\leq B_1
\exp \omega_{\bfM^{(\lambda)}}(B_2t).
\end{split}
\end{equation}

If \eqref{M2'B} holds, then
\begin{equation}
\label{22LB}
\begin{split}
&\forall\;\lambda>0\ \exists\;0<\kappa\leq\lambda,B_1,B_2\geq1\ \forall t\in\R^d:\\
&(1+|t|)^{2(d+1)}\exp \omega_{\bfM^{(\lambda)}}(t)\leq B_1
\exp \omega_{\bfM^{(\kappa)}}(B_2t).
\end{split}
\end{equation}
\end{Lemma}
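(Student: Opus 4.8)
The plan is to read both \eqref{22LR} and \eqref{22LB} through the identity $\exp\omega_{\bfM}(t)=\sup_{\alpha\in\N_{0,t}^d}|t^\alpha|/M_\alpha$, which is immediate from the definition of the associated weight function, and to regard the one-step conditions \eqref{M2'R} and \eqref{M2'B} as shift estimates that must be iterated $2(d+1)$ times; the one-dimensional prototype of the passage ``differential shift $\leftrightarrow$ extra factor in the weight function'' is Lemma~\ref{lemma23G}, but here I prefer to iterate directly in the multi-index setting. I treat the Roumieu case first and fix $\lambda>0$. Setting $m:=2(d+1)$, I build a finite chain $\lambda=\lambda_0\le\lambda_1\le\cdots\le\lambda_m=:\kappa$ by applying \eqref{M2'R} successively, $\lambda_{i+1}$ being the index it produces from $\lambda_i$, and I let $A\ge1$ be the largest of the finitely many constants that arise. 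Removing one unit vector $e_j$ at a time and using at each step that the current multi-index has length at most $|\alpha|+m$, the iteration gives (invoking the nesting $\bfM^{(\lambda_i)}\le\bfM^{(\kappa)}$ to reach the common index $\kappa$ when $|\gamma|<m$) a multi-index shift bound
\begin{equation*}
M^{(\lambda)}_{\alpha+\gamma}\le A^{m|\alpha|+m^2}\,M^{(\kappa)}_\alpha,\qquad \alpha\in\N_0^d,\ |\gamma|\le m .
\end{equation*}
Writing $A':=A^{m}$ and $B:=A^{m^2}$, this reads $M^{(\lambda)}_{\alpha+\gamma}\le B\,(A')^{|\alpha|}M^{(\kappa)}_\alpha$. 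The decisive feature is that the exponent stays \emph{linear} in $|\alpha|$, so that $(A')^{|\alpha|}\le (A')^{|\alpha+\gamma|}$ can subsequently be absorbed into a dilation of the variable.

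Next I linearize the polynomial factor. Since $|t|\le|t_1|+\cdots+|t_d|$, the multinomial theorem gives
\begin{equation*}
(1+|t|)^{2(d+1)}\le(1+|t_1|+\cdots+|t_d|)^{2(d+1)}=\sum_{|\gamma|\le 2(d+1)}c_\gamma\,|t^\gamma|
\end{equation*}
with finitely many positive coefficients $c_\gamma$. For each such $\gamma$ I use the weight-function identity to write $|t^\gamma|\exp\omega_{\bfM^{(\kappa)}}(t)=\sup_{\alpha}|t^{\alpha+\gamma}|/M^{(\kappa)}_\alpha$ and then insert the shift bound $1/M^{(\kappa)}_\alpha\le B(A')^{|\alpha|}/M^{(\lambda)}_{\alpha+\gamma}$. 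Because $(A')^{|\alpha|}\le (A')^{|\alpha+\gamma|}$, each term becomes $B\sup_\alpha|(A't)^{\alpha+\gamma}|/M^{(\lambda)}_{\alpha+\gamma}\le B\exp\omega_{\bfM^{(\lambda)}}(A't)$. Summing over $\gamma$ and collecting constants yields \eqref{22LR} with $B_2:=A'$ and $B_1:=B\sum_\gamma c_\gamma$. The bookkeeping with $\N_{0,t}^d$ is harmless: whenever $t_j=0$ the monomials that would leave $\N_{0,t}^d$ carry the zero factor $t_j$, so every index occurring in the suprema indeed lies in $\N_{0,t}^d$.

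The Beurling estimate \eqref{22LB} follows by the mirror-image argument. Given $\lambda$, I now iterate \eqref{M2'B}, which produces a \emph{decreasing} chain $\lambda=\lambda_0\ge\lambda_1\ge\cdots\ge\lambda_m=:\kappa\le\lambda$, and removing one unit vector at a time gives $M^{(\kappa)}_{\alpha+\gamma}\le B(A')^{|\alpha|}M^{(\lambda)}_\alpha$ for $|\gamma|\le 2(d+1)$; the same linearization of $(1+|t|)^{2(d+1)}$ together with the identity for $\exp\omega_{\bfM^{(\lambda)}}$ then gives \eqref{22LB}. The only genuine obstacle is the shift estimate of the first step: one must keep the accumulated constant of the precise form $B\,(A')^{|\alpha|}$, that is, force the exponent to grow linearly and not quadratically in $|\alpha|$, since it is exactly this linearity that lets the excess be swallowed by the dilation $t\mapsto B_2t$ rather than by uncontrolled growth in $t$. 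Finally, should any associated function take the value $+\infty$ at some $t$, the corresponding inequality is trivial, because the nesting of $\M$ forces the right-hand side to be $+\infty$ as well.
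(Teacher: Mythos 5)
Your argument is correct, and its engine is the same as the paper's: iterate the one-step condition \eqref{M2'R} (resp.\ \eqref{M2'B}) exactly $2(d+1)$ times, keep the accumulated constant in the linear form $B\,(A')^{|\alpha|}$, and then absorb the polynomial factor into a dilation $t\mapsto B_2t$ inside the associated weight function. Where you genuinely differ is in the decomposition of $(1+|t|)^{2(d+1)}$. The paper reduces it, for $|t|_\infty\geq1$, to the single power $|t|_\infty^{2(d+1)}=|t_j|^{2(d+1)}$ of the maximal coordinate, so it only needs the shift bound \eqref{1C} along one coordinate direction, $M^{(\lambda)}_{\alpha+2(d+1)e_j}\leq A^{|\alpha|+2d+2}M^{(\kappa)}_\alpha$; the price is a case distinction, with the region $|t|_\infty\leq1$ handled by a compactness constant $C_\lambda$. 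You instead expand $(1+|t_1|+\dots+|t_d|)^{2(d+1)}$ by the multinomial theorem into monomials $|t^\gamma|$ with $|\gamma|\leq 2(d+1)$, and correspondingly prove the shift bound for arbitrary multi-indices $\gamma$ of bounded length (the same number of iterations, just distributed over several coordinates, with the nesting of $\M$ used to land at a common index $\kappa$). This buys a uniform, case-free estimate: no splitting at $|t|_\infty=1$ and no constant of the form $\max_{|t|\leq\sqrt d}(1+|t|)^{2(d+1)}\exp\omega_{\bfM^{(\kappa)}}(t)$, which under the bare hypothesis \eqref{M2'R}/\eqref{M2'B} is not even obviously finite --- a degenerate situation that your term-by-term treatment, together with your closing remark on infinite values of the associated functions, sidesteps entirely (the paper excludes it elsewhere, via Remark~\ref{omegafinite2}, when the lemma is actually applied). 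Your bookkeeping with $\N^d_{0,t}$ is also sound: any monomial whose index leaves the admissible set carries a factor $t_j^{\gamma_j}=0$ and hence contributes nothing to the suprema.
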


\begin{proof}
First, we consider the Roumieu case. By $2(d+1)$ iterated applications of
\eqref{M2'R} we find $\kappa_{2d+2}\geq\kappa_{2d+1}\geq\ldots\geq\kappa_1\geq\lambda>0$ and $A_1,\ldots,A_{2d+2}\geq1$
such that, for all $\alpha\in\N_0^d$ and $1\leq j\leq d$,
\beqs
\nonumber
M^{(\lambda)}_{\alpha+2(d+1)e_j}\leq&&A_1^{|\alpha|+2d+2}
M^{(\kappa_1)}_{\alpha+(2d+1)e_j}\\
\nonumber
\leq&&
A_1^{|\alpha|+2d+2}A_2^{|\alpha|+2d+1}
M^{(\kappa_2)}_{\alpha+2de_j}\\
\nonumber
\le&&\cdots\leq
A_1^{|\alpha|+2d+2}A_2^{|\alpha|+2d+1}\cdots A_{2d+2}^{|\alpha|+1}
M^{(\kappa_{2d+2})}_{\alpha}\\
\label{1C}
\leq&&A^{|\alpha|+2d+2}M^{(\kappa)}_{\alpha}
\eeqs
for $A:=(\max\{A_1,\ldots,A_{2d+2}\})^{2d+2}$ and $\kappa:=\kappa_{2d+2}$.

Now, setting $|t|_{\infty}:=\max_{1\leq j\leq d}|t_j|$, we have, for $|t|_\infty\geq1$,
\beqsn
(1+|t|)^{2(d+1)}=&&\sum_{j=0}^{2(d+1)}\binom{2d+2}{j}|t|^j
\leq\sum_{j=0}^{2(d+1)}\binom{2d+2}{j}(\sqrt{d}|t|_{\infty})^j\\
\leq&& d^{d+1}|t|^{2(d+1)}_\infty\sum_{j=0}^{2(d+1)}\binom{2d+2}{j}
=(4d)^{d+1}|t|^{2(d+1)}_\infty,
\eeqsn
since $|t|=\sqrt{t_1^2+\ldots+t_d^2}\leq\sqrt{d}|t|_\infty$.
Therefore, by the definition of the associated weight function, choosing $\kappa\geq\lambda>0$ as
in \eqref{1C}, we have, assuming $|t|_\infty=t_j$ for some $1\leq j\leq d$ and $|t|_\infty\geq1$:
\beqsn
(1+|t|)^{2(d+1)}\exp\omega_{\bfM^{(\kappa)}}(t)\leq&&
(4d)^{d+1}|t_j|^{2(d+1)}\sup_{\alpha\in\N_0^d}\frac{|t^\alpha|}{M^{(\kappa)}_\alpha}\\
\leq&&(4d)^{d+1}\sup_{\alpha\in\N_0^d}
\frac{|(At)^{\alpha+2(d+1)e_j}|}{M^{(\lambda)}_{\alpha+2(d+1)e_j}}\\
\leq&&(4d)^{d+1}\sup_{\beta\in\N_0^d}
\frac{|(At)^{\beta}|}{M^{(\lambda)}_{\beta}}
=(4d)^{d+1}\exp\omega_{\bfM^{(\lambda)}}(At).
\eeqsn

On the other hand, if $t\in\R^d$ with $|t|_\infty\leq 1$, then $|t|\leq\sqrt{d}$ and hence, for $\kappa$
as in \eqref{1C},
\beqsn
(1+|t|)^{2(d+1)}\exp\omega_{\bfM^{(\kappa)}}(t)\leq C_\lambda
\leq C_\lambda\exp\omega_{\bfM^{(\lambda)}}(At),
\eeqsn
with $C_\lambda$ depending on $\lambda$ since $\kappa$ depends on $\lambda$.

We have thus proved \eqref{22LR} with $B_1=\max\{(4d)^{d+1},C_\lambda\}$
and $B_2=A$.

In the Beurling case, by $2(d+1)$ iterated applications of \eqref{M2'B}, we find
$0<\kappa_{2d+2}\leq\kappa_{2d+1}\leq\ldots\leq\kappa_1\leq\lambda$ and $A_1,\ldots,A_{2d+2}\geq1$ such that
\beqs\label{iter-B}
M^{(\lambda)}_\alpha\geq&&A_1^{-|\alpha|-1}M^{(\kappa_1)}_{\alpha+e_j}
\geq A_1^{-|\alpha|-1}A_2^{-|\alpha|-2}M^{(\kappa_2)}_{\alpha+2e_j}\\ \nonumber
\ldots\geq&&A_1^{-|\alpha|-1}A_2^{-|\alpha|-2}\cdots A_{2d+2}^{-|\alpha|-2d-2}
M^{(\kappa_{2d+2})}_{\alpha+2(d+1)e_j}
\geq A^{-|\alpha|-2d-2}
M^{(\kappa)}_{\alpha+2(d+1)e_j},
\eeqs
for $A:=(\max\{A_1,\ldots,A_{2d+2}\})^{2d+2}$ and $\kappa:=\kappa_{2d+2}$.
Then we proceed as in the Roumieu case and prove that
\beqsn
(1+|t|)^{2(d+1)}\exp\omega_{\bfM^{(\lambda)}}(t)
\leq B_1'\exp\omega_{\bfM^{(\kappa)}}(At),
\eeqsn
for $B_1':=\max\{(4d)^{d+1},\max_{|t|\leq\sqrt{d}}
(1+|t|)^{2(d+1)}\exp\omega_{\bfM^{(\lambda)}}(t)\}$.

The proof is complete.
\end{proof}

\begin{Lemma}
\label{lemma58G}
Let $\M$ be a weight matrix that satisfies \eqref{M2'B}. Then
\begin{equation}
\label{5BJO-Rodino}
\begin{split}
&\forall\;\lambda>0, N\in\N\ \exists\;0<\kappa\leq\lambda, A,B\geq1\ \forall t\in\R^d\setminus\{0\}:\\
&\omega_{\bfM^{(\lambda)}}(t)+N\log|t|\leq\omega_{\bfM^{(\kappa)}}(At)+B.
\end{split}
\end{equation}
Let $\M$ be a weight matrix that satisfies \eqref{M2'R}. Then
\begin{equation}
\label{5BJO-Rodino1}
\begin{split}
&\forall\;\lambda>0, N\in\N\ \exists\;\kappa\geq\lambda, A,B\geq1\ \forall t\in\R^d\setminus\{0\}: \\
&\omega_{\bfM^{(\kappa)}}(t)+N\log|t|\leq\omega_{\bfM^{(\lambda)}}(At)+B.
\end{split}
\end{equation}
\end{Lemma}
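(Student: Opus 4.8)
The two assertions are the matrix-weighted, $N$-fold analogues of the power-weight estimates \eqref{22LR}--\eqref{22LB} already proved in Lemma~\ref{lemma21G}, with the fixed exponent $2(d+1)$ replaced by an arbitrary $N\in\N$ and the polynomial factor $(1+|t|)^{2(d+1)}$ replaced by the homogeneous $|t|^N$. Accordingly I would follow the same scheme, in a slightly streamlined form: first convert the mixed derivation closedness condition into a single \emph{shifted} comparison of sequences, and then translate that comparison into an inequality between associated weight functions.

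\emph{Roumieu case.} Fix $\lambda>0$ and $N\in\N$. By $N$ iterated applications of \eqref{M2'R}, exactly as in the derivation of \eqref{1C}, I obtain $\kappa\geq\lambda$ and $A\geq1$ with
\[
M^{(\lambda)}_{\alpha+Ne_j}\leq A^{|\alpha|+N}M^{(\kappa)}_\alpha
\qquad\text{for all }\alpha\in\N_0^d,\ 1\leq j\leq d .
\]
Now fix $t\in\R^d\setminus\{0\}$ and pick an index $j$ with $|t_j|=|t|_\infty$; then $t_j\neq0$. Using $|t|\leq\sqrt d\,|t_j|$, the identity $|t_j|^N|t^\alpha|=|t^{\alpha+Ne_j}|$, the displayed comparison in the form $1/M^{(\kappa)}_\alpha\leq A^{|\alpha|+N}/M^{(\lambda)}_{\alpha+Ne_j}$, and finally $A^{|\alpha|+N}=A^{|\alpha+Ne_j|}$ to absorb the constant into the argument, I would estimate
\[
|t|^N e^{\omega_{\bfM^{(\kappa)}}(t)}
\leq d^{N/2}\sup_{\alpha\in\N_{0,t}^d}\frac{|t^{\alpha+Ne_j}|}{M^{(\kappa)}_\alpha}
\leq d^{N/2}\sup_{\alpha\in\N_{0,t}^d}\frac{|(At)^{\alpha+Ne_j}|}{M^{(\lambda)}_{\alpha+Ne_j}}
\leq d^{N/2}e^{\omega_{\bfM^{(\lambda)}}(At)} .
\]
Taking logarithms yields \eqref{5BJO-Rodino1} with the same $A$ and $B=\max\{1,\tfrac N2\log d\}$; and if $\omega_{\bfM^{(\kappa)}}(t)=+\infty$ the chain forces $\omega_{\bfM^{(\lambda)}}(At)=+\infty$, so the inequality is also valid in the extended sense.

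\emph{Beurling case.} The argument is symmetric. Fixing $\lambda$ and $N$, I would iterate \eqref{M2'B} $N$ times, mirroring \eqref{iter-B}, to produce $0<\kappa\leq\lambda$ and $A\geq1$ with $M^{(\kappa)}_{\alpha+Ne_j}\leq A^{|\alpha|+N}M^{(\lambda)}_\alpha$. The very same manipulation of the associated weight functions, now with the roles of $\lambda$ and $\kappa$ interchanged, gives $|t|^N e^{\omega_{\bfM^{(\lambda)}}(t)}\leq d^{N/2}e^{\omega_{\bfM^{(\kappa)}}(At)}$ and hence \eqref{5BJO-Rodino}.

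\emph{Where the work lies.} There is no deep obstacle here; the content is entirely in the index bookkeeping of the iteration and in making sure the passage to the supremum is legitimate. The one point requiring genuine care is that the supremum defining $\omega_\bfM$ runs over $\N_{0,t}^d$ rather than over all of $\N_0^d$: the reindexing $\beta=\alpha+Ne_j$ is admissible precisely because $t_j\neq0$ guarantees $\alpha+Ne_j\in\N_{0,t}^d$, and because this shifted set is contained in $\N_{0,t}^d$, which is what lets me pass from the middle supremum to $e^{\omega_{\bfM^{(\lambda)}}(At)}$ (respectively $e^{\omega_{\bfM^{(\kappa)}}(At)}$). Unlike in Lemma~\ref{lemma21G}, the homogeneity of $|t|^N$ removes the need to split into the cases $|t|_\infty\geq1$ and $|t|_\infty\leq1$, so the estimate holds uniformly for every $t\neq0$ and no separate small-$t$ constant is needed.
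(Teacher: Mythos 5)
Your proposal is correct and follows essentially the same route as the paper's own proof: the same $N$-fold iteration of \eqref{M2'R} (resp.\ \eqref{M2'B}) to get the shifted comparison $M^{(\lambda)}_{\alpha+Ne_j}\leq A^{|\alpha|+N}M^{(\kappa)}_\alpha$ (resp.\ its Beurling counterpart), the same estimate $|t|^N\leq d^{N/2}|t_j|^N$ with $|t_j|=|t|_\infty\neq0$, the same reindexing $\alpha\mapsto\alpha+Ne_j$ inside the supremum over $\N_{0,t}^d$, and the same constant $B=\max\{1,\tfrac N2\log d\}$. Your added remarks on the admissibility of the reindexing and on the absence of a small-$|t|$ case split are accurate but do not change the argument.
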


\begin{proof}
If $t\in\R^d\setminus\{0\}$, then by the definition of the associated weight function, for $1\leq j\leq d$ such that $|t|_\infty=t_j$, 
\beqs
\nonumber
|t|^N\exp\omega_{\bfM^{(\lambda)}}(t)\leq&&(\sqrt{d}|t|_\infty)^N\exp
\omega_{\bfM^{(\lambda)}}(t)
=d^{N/2}|t_j|^N\exp\omega_{\bfM^{(\lambda)}}(t)\\
\label{prel1}
=&&d^{N/2}|t^{Ne_j}|\sup_{\alpha\in\N_{0,t}^d}\frac{|t^\alpha|}{M^{(\lambda)}_\alpha}
=d^{N/2}
\sup_{\alpha\in\N_{0,t}^d}\frac{|t^{\alpha+Ne_j}|}{M^{(\lambda)}_\alpha},
\eeqs
where $\N^d_{0,t}$ is defined by \eqref{4}. This estimate is valid for any given index $\lambda>0$. 

In the Beurling case, by $N$ iterated applications of \eqref{M2'B} we find $\kappa_N\leq\kappa_{N-1}\leq\ldots\leq\kappa_1\leq\lambda$
and $A_1,\ldots,A_N\geq1$ such that, for $A:=(\max\{A_1,\ldots,A_N\})^N$ and
$\kappa:=\kappa_N$, we have, proceeding as in \eqref{iter-B}, $M^{(\kappa)}_{\alpha+Ne_j}\le A^{|\alpha|+N} M^{(\lambda)}_\alpha.$
%\beqs\label{iter-B}
%\frac{1}{M^{(\lambda)}_\alpha}\le \frac{A_1^{|\alpha|+1}}{M^{(\kappa_1)}_{\alpha+e_j}}\le \frac{A_1^{|\alpha|+1}A_2^{|\alpha|+2}}{M^{(\kappa_2)}_{\alpha+2e_j}}\le \cdots \le \frac{A_1^{|\alpha|+1}A_2^{|\alpha|+2}\cdots A_N^{|\alpha|+N}
%}{M^{(\kappa_N)}_{\alpha+Ne_j}}\le \frac{A^{|\alpha|+N}}{M^{(\kappa)}_{\alpha+Ne_j}}.
%\eeqs
Therefore 
\beqsn
|t|^N\exp\omega_{\bfM^{(\lambda)}}(t)
\leq d^{N/2}
\sup_{\alpha\in\N_{0,t}^d}
\frac{|(At)^{\alpha+Ne_j}|}{M^{(\kappa)}_{\alpha+Ne_j}}
\leq d^{N/2}\exp\omega_{\bfM^{(\kappa)}}(At),
\eeqsn
and we conclude that \eqref{5BJO-Rodino} is satisfied for $B:=\max\{\frac N2\log d,1\}$.

In the Roumieu case we make $N$ iterated applications of \eqref{M2'R} and we find indices $\kappa:=\kappa_N\geq\kappa_{N-1}\geq\ldots\geq\kappa_1\geq\lambda$ and $A_1,\ldots,A_N\geq1$ such that, for $A:=(\max\{A_1,\ldots,A_N\})^N$ and $\kappa=\kappa_N$, as in \eqref{1C} we have that
$M^{(\lambda)}_{\alpha+Ne_j}\leq A^{|\alpha|+N}M^{(\kappa)}_\alpha$ and hence
from \eqref{prel1}:
\beqsn
|t|^N\exp\omega_{\bfM^{(\kappa)}}(t)\leq d^{N/2}
\sup_{\alpha\in\N_{0,t}^d}
\frac{|t^{\alpha+Ne_j}|}{M^{(\kappa)}_{\alpha}}
%\leq&& d^{N/2}
%\sup_{\alpha\in\N_{0,t}^d}
%\frac{|t^{\alpha+Ne_j}|A_1^{|\alpha|+1}A_2^{|\alpha|+2}}{M^{(\kappa_{N-2})}_{\alpha+2e_j}}\\
%\ldots\leq&&d^{N/2}
%\sup_{\alpha\in\N_{0,t}^d}
%\frac{|t^{\alpha+Ne_j}|A_1^{|\alpha|+1}A_2^{|\alpha|+2}\cdots A_N^{|\alpha|+N}
%}{M^{(\lambda)}_{\alpha+Ne_j}}\\
\leq d^{N/2}
\sup_{\alpha\in\N_{0,t}^d}
\frac{|(At)^{\alpha+Ne_j}|}{M^{(\lambda)}_{\alpha+Ne_j}}
\le d^{N/2}\exp\omega_{\bfM^{(\lambda)}}(At),
\eeqsn
so that \eqref{5BJO-Rodino1} is satisfied with $B=\max\{\frac N2\log d,1\}$.
\end{proof}

Now, we  consider the different system of seminorms
\beqs\label{l2norms}
\|f\|_{2,{\bf M}^{(\lambda)},h}:=\sup_{\alpha,\beta\in\N_0^d}
\frac{\|x^\alpha\partial^\beta f\|_2}{h^{|\alpha+\beta|}M^{(\lambda)}_{\alpha+\beta}},
\qquad\lambda,h>0,
\eeqs
on $\Sch_{(\M)}$ and $\Sch_{\{\M\}}$, where $\|\cdot\|_2$ is the $L^2$ norm. Under suitable conditions on the weight matrix $\M$, it turns out to be equivalent to the previous one given by sup norms, as we prove in the following

\begin{Prop}
\label{lemma25G}
Let $\M$ be a weight matrix as defined in \eqref{defcalM} that satisfies \eqref{12L2B} and \eqref{M2'B} (\eqref{12L2R} and
\eqref{M2'R}). Then the system of seminorms $\|\cdot\|_{\infty,{\bf M}^{(\lambda)},h}$
in $\Sch_{(\M)}$ ($\Sch_{\{\M\}}$) is equivalent to the system of seminorms
$\|\cdot\|_{2,{\bf M}^{(\lambda)},h}$. More precisely, in the Beurling case we have the following two conditions for every $f\in C^\infty(\R^d)$
\beqs
\label{dis1}
\exists\;C_1>0\ \forall\, \lambda,h>0\ \exists\;\kappa>0, \tilde h=\tilde{h}_{\lambda,h}>0:\quad
\|f\|_{2,{\bf M}^{(\lambda)},h}\leq C_1\|f\|_{\infty,{\bf M}^{(\kappa)},\tilde h},
\eeqs
\beqs
\label{dis2}
\forall\,\lambda,h>0\ \exists\;\widetilde{\kappa}>0,  C_{\lambda,h}>0,\tilde h=\tilde h_{\lambda,h}>0:
\quad\|f\|_{\infty,{\bf M}^{(\lambda)},h}\leq C_{\lambda,h}\|f\|_{2,{\bf M}^{(\widetilde{\kappa})},\tilde h}\,;
\eeqs
in the Roumieu case we have the following two conditions, for every $f\in C^\infty(\R^d)$,
\beqs
\label{dis11}
\forall\;\lambda,h>0\,\exists\;C_{\lambda,h}>0,\,\exists\;\kappa\geq\lambda,\tilde h>0:\quad
\|f\|_{2,{\bf M}^{(\kappa)},\tilde h}\leq C_{\lambda,h}\|f\|_{\infty,{\bf M}^{(\lambda)},h},
\eeqs
\beqs
\label{dis4}
\forall\,\lambda,h>0\ \exists\, C_{\lambda,h}>0, \widetilde{\kappa}>0,\tilde h>0:
\quad\|f\|_{\infty,{\bf M}^{(\widetilde{\kappa})},\tilde h}\leq C_{\lambda,h}\|f\|_{2,{\bf M}^{(\lambda)},h}\,.
\eeqs
\end{Prop}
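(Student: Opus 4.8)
The plan is to establish the four inequalities \eqref{dis1}, \eqref{dis2} (Beurling) and \eqref{dis11}, \eqref{dis4} (Roumieu) one at a time, in each case reducing the comparison of the two seminorm systems to an estimate, for a \emph{single} pair $(\alpha,\beta)$, between $\|x^\alpha\partial^\beta f\|_2$ and a finite combination of sup-norms $\|x^{\alpha'}\partial^{\beta'}f\|_\infty$ (or vice versa), where $(\alpha',\beta')$ differs from $(\alpha,\beta)$ only by a bounded amount. Two classical analytic facts drive the two directions: the weight $(1+|x|)^{-2(d+1)}$ is integrable on $\R^d$ (since $2(d+1)>d$), and the Sobolev embedding $H^s(\R^d)\hookrightarrow L^\infty(\R^d)$ holds for any integer $s>d/2$, so that $\|g\|_\infty\le C_d\sum_{|\gamma|\le s}\|\partial^\gamma g\|_2$. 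All the weight-sequence information enters only in re-absorbing the resulting bounded multi-index shifts into the weights $M^{(\cdot)}_{\alpha+\beta}$ at a comparable matrix level, which is exactly what \eqref{M2'B}/\eqref{M2'R}, iterated as in the proofs of Lemma~\ref{lemma21G} and Lemma~\ref{lemma58G}, provide.

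For the direction $\|\cdot\|_2\lesssim\|\cdot\|_\infty$ (i.e.\ \eqref{dis1} and \eqref{dis11}) I would write
\[
\|x^\alpha\partial^\beta f\|_2^2=\int_{\R^d}(1+|x|)^{2(d+1)}|x^\alpha\partial^\beta f(x)|^2\,\frac{dx}{(1+|x|)^{2(d+1)}}\le K\sup_{x}(1+|x|)^{2(d+1)}|x^\alpha\partial^\beta f(x)|^2,
\]
with $K:=\int_{\R^d}(1+|x|)^{-2(d+1)}\,dx<\infty$. Since $(1+|x|)^{2(d+1)}\le 2^{d+1}(1+|x|^2)^{d+1}$ is a polynomial, one gets $(1+|x|)^{2(d+1)}|x^\alpha|\le\sum_{|\gamma|\le 2(d+1)}c_\gamma|x^{\alpha+\gamma}|$ with finitely many nonnegative $c_\gamma$ depending only on $d$, so the right-hand side is controlled by $\sum_\gamma c_\gamma\|x^{\alpha+\gamma}\partial^\beta f\|_\infty$, a combination of sup-norms at the \emph{upward-shifted} indices $\alpha+\gamma$ (nothing is lowered here). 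Iterating \eqref{M2'B} (resp.\ \eqref{M2'R}) exactly $|\gamma|\le 2(d+1)$ times — precisely the computation in \eqref{1C} and \eqref{iter-B} — yields $M^{(\kappa)}_{\alpha+\gamma+\beta}\le A^{|\alpha+\beta|+|\gamma|}M^{(\lambda)}_{\alpha+\beta}$ with $\kappa$ on the side of $\lambda$ dictated by the case; choosing $\tilde h$ small enough that $A\tilde h\le\min\{1,h\}$ absorbs all geometric and combinatorial constants and gives \eqref{dis1} with $C_1=K^{1/2}\sum_\gamma c_\gamma$ depending only on $d$, and the (easier) \eqref{dis11}, whose constant may depend on $\lambda,h$.

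For the direction $\|\cdot\|_\infty\lesssim\|\cdot\|_2$ (i.e.\ \eqref{dis2} and \eqref{dis4}) I would apply the Sobolev inequality to $g=x^\alpha\partial^\beta f$ and then the Leibniz rule: $\partial^\gamma(x^\alpha\partial^\beta f)$ is a finite sum of terms $\binom{\gamma}{\delta}\frac{\alpha!}{(\alpha-\delta)!}x^{\alpha-\delta}\partial^{\beta+\gamma-\delta}f$ with $\delta\le\gamma$, $\delta\le\alpha$, $|\gamma|\le s$. Thus $\|x^\alpha\partial^\beta f\|_\infty$ is bounded by $L^2$-norms at indices $\mu=\alpha+\beta+\gamma-2\delta$ with $|\mu-(\alpha+\beta)|\le 3s$; the combinatorial weight $\frac{\alpha!}{(\alpha-\delta)!}\le|\alpha|^{|\delta|}\le|\alpha|^{s}$ is subexponential in $|\alpha|$ and hence is swallowed by an arbitrarily small change of the parameter $h$. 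The upward part of the shift $\gamma-2\delta$ is again absorbed by iterating \eqref{M2'B}/\eqref{M2'R}, while the degree-lowering terms (derivatives falling on $x^\alpha$, so $\mu\le\alpha+\beta$) are controlled by the increasingness in the multi-index of the normalized matrix entries, cf.\ Lemma~\ref{lemmatrivial}, together with the monotonicity $\bfM^{(\lambda)}\le\bfM^{(\kappa)}$. Choosing the free level $\widetilde\kappa$ on the correct side of $\lambda$ and $\tilde h$ suitably small then yields \eqref{dis2} and \eqref{dis4}.

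A word on the hypotheses and on the hard part. Conditions \eqref{12L2B}/\eqref{12L2R} are used, through Remark~\ref{omegafinite2}, to guarantee $(M^{(\lambda)}_\alpha)^{1/|\alpha|}\to\infty$, hence the finiteness of the associated weight functions that legitimizes Lemma~\ref{lemma21G} and Lemma~\ref{lemma58G}. The step I expect to be the genuine obstacle is the multi-index bookkeeping: one must absorb bounded shifts of $\alpha+\beta$ into the coupled weights $M^{(\cdot)}_{\alpha+\beta}$ \emph{uniformly}, while moving the matrix parameter strictly in the direction imposed by each setting, keeping the geometric factors $A^{|\gamma|}$ and the combinatorial factors $|\alpha|^{s}$ under control (in particular keeping $C_1$ \emph{independent} of $\lambda,h$ in \eqref{dis1}), and handling the degree-lowering Leibniz terms of the second direction. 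It is the need to match the precise and asymmetric quantifier patterns of \eqref{dis1}--\eqref{dis4} that makes this delicate rather than routine.
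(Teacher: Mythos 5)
Your first direction (\eqref{dis1} and \eqref{dis11}) is fine and is essentially the paper's argument: only \emph{upward} index shifts $\alpha+\gamma$, $|\gamma|\le 2(d+1)$, appear there, and iterated \eqref{M2'B}/\eqref{M2'R} absorbs them exactly as in \eqref{1C} and \eqref{iter-B}. The genuine gap is in the second direction (\eqref{dis2} and \eqref{dis4}), at the step where you handle the degree-lowering Leibniz terms. You claim these are ``controlled by the increasingness in the multi-index of the normalized matrix entries, cf.\ Lemma~\ref{lemmatrivial}, together with the monotonicity $\bfM^{(\lambda)}\le\bfM^{(\kappa)}$.'' But Lemma~\ref{lemmatrivial} requires the log-convexity condition \eqref{M1}, which is \emph{not} among the hypotheses of this proposition: the matrix entries in \eqref{defcalM} are only assumed normalized and monotone in the parameter $\lambda$, and neither normalization nor \eqref{M2'B}/\eqref{M2'R} implies any monotonicity of $\alpha\mapsto M^{(\lambda)}_\alpha$. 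Concretely, the constant matrix $M^{(\lambda)}_\alpha:=e^{1-e^{|\alpha|}}$ (all $\lambda$) satisfies \eqref{defcalM} and both \eqref{M2'B}, \eqref{M2'R} with $\kappa=\lambda$, yet the estimate you need, $M_{\mu}\le C^{|\alpha+\beta|}M_{\alpha+\beta}$ for lowered indices $\mu\le\alpha+\beta$ with $|\alpha+\beta-\mu|$ bounded, fails: the ratio $M_{\alpha+\beta-e_j}/M_{\alpha+\beta}=\exp\bigl(e^{|\alpha+\beta|}-e^{|\alpha+\beta|-1}\bigr)$ is super-geometric in $|\alpha+\beta|$. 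Note that \eqref{M2'} only bounds \emph{rises} of the sequence (higher index $\lesssim$ lower index times geometric factors); it puts no floor under \emph{drops}, which is what the Leibniz terms with $2\delta>\gamma$ require.

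This is precisely where the paper uses \eqref{12L2B}/\eqref{12L2R}, whose role you have misassigned: you invoke it only through Remark~\ref{omegafinite2} for finiteness of associated weight functions and to ``legitimize'' Lemmas~\ref{lemma21G} and \ref{lemma58G} --- but those lemmas do not enter this proof at all (no associated weight function appears in it). In the paper's argument, \eqref{12L2B}/\eqref{12L2R}, with the factor $\alpha^{\alpha/2}\ge1$ applied to the shift $2\delta$ (it even swallows the combinatorial term $\delta^\delta$), is exactly the surrogate for the missing index-monotonicity: it raises the lowered index $\alpha+\beta+\gamma-2\delta$ back up to $\alpha+\beta+\gamma$ at the cost of factors $BC^{|2\delta|}H^{|\alpha+\beta+\gamma|}$, after which iterated \eqref{M2'B} (resp.\ \eqref{M2'R}) descends from $\alpha+\beta+\gamma$ to $\alpha+\beta$, with the matrix parameters moving in the direction required by the quantifiers of \eqref{dis2} and \eqref{dis4} (in the Roumieu case: first $\kappa\ge\lambda$ from \eqref{12L2R}, then $\widetilde\kappa\ge\kappa$ from \eqref{M2'R}). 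Without this use of \eqref{12L2B}/\eqref{12L2R} your sketch does not close, so as written the proof of \eqref{dis2} and \eqref{dis4} is incomplete.
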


\begin{proof}
Let $f\in C^\infty(\R^d)$. Then, for $C_1=(\int_{\R^d}\frac{1}{(1+|x|^2)^{d+1}}dx)^{1/2}$, we have 
\beqsn
\|x^\alpha\partial^\beta f\|_2 %=&&\left(\int_{\R^d}|x^\alpha\partial^\beta f(x)|^2dx\right)^{1/2}\\
%=&&\left(\int_{\R^d}\frac{(1+|x|^2)^{d+1}}{(1+|x|^2)^{d+1}}|x^\alpha\partial^\beta f(x)|^2dx\right)^{1/2}\\
%\leq&&\|(1+|x|^2)^{\frac{d+1}{2}}x^\alpha\partial^\beta f(x)\|_\infty
%\left(\int_{\R^d}\frac{1}{(1+|x|^2)^{d+1}}dx\right)^{1/2}\\
%=&&
\le C_1\|(1+|x|^2)^{\frac{d+1}{2}}x^\alpha\partial^\beta f(x)\|_\infty.
\eeqsn
%for $C_1=(\int_{\R^d}\frac{1}{(1+|x|^2)^{d+1}}dx)^{1/2}$. 
If $|x|_\infty\leq1$, then
\beqsn
(1+|x|^2)^{\frac{d+1}{2}}\leq(1+d|x|_\infty^2)^{\frac{d+1}{2}}
\leq(1+d)^{\frac{d+1}{2}}.
\eeqsn
On the other hand, if $|x|_\infty\geq1$ then
\beqsn
(1+|x|^2)^{\frac{d+1}{2}}\leq(|x|_\infty^2+|x|^2)^{\frac{d+1}{2}}
\leq(|x|_\infty^2+d|x|_\infty^2)^{\frac{d+1}{2}}
\leq(d+1)^{\frac{d+1}{2}}|x|_\infty^{d+1}.
\eeqsn
Therefore, for any fixed $x\in\R^d$, being $|x|_\infty=|x_j|$ for some $1\leq j\leq d$, we have
\beqsn
|(1+|x|^2)^{\frac{d+1}{2}}x^\alpha|\leq(d+1)^{\frac{d+1}{2}}
\max\{|x^{\alpha+(d+1)e_j}|,|x^\alpha|\}
\eeqsn
and hence
\begin{equation}
\label{5C}
\begin{split}
\|x^\alpha\partial^\beta f\|_2\leq C_1(d+1)^{\frac{d+1}{2}}
\max\{&\|x^{\alpha+(d+1)e_1}\partial^\beta f\|_\infty,
\|x^{\alpha+(d+1)e_2}\partial^\beta f\|_\infty,\\
&\ldots,\|x^{\alpha+(d+1)e_d}\partial^\beta f\|_\infty,\|x^\alpha\partial^\beta f\|_\infty\}.
\end{split}
\end{equation}

Now, we consider separately the Beurling and Roumieu cases.
In the Beurling case, for every $\lambda,h>0$ we first estimate $\|x^{\alpha+(d+1)e_j}
\partial^\beta f\|_{2,{\bf M}^{(\lambda)},h}$ in order to use \eqref{5C}.
By $(d+1)$ iterated applications of \eqref{M2'B} there exist
$0<\kappa:=\kappa_{d+1}\leq\kappa_d\leq\ldots\leq\kappa_1\leq\lambda$ and
$A_1,\ldots,A_{d+1}\geq1$ ($A_j$ depending on $\lambda$) such that, proceeding as in \eqref{iter-B}, we obtain $M^{(\kappa)}_{\alpha+\beta+(d+1)e_j}\le A_{\lambda}^{|\alpha+\beta|+d+1} M^{(\lambda)}_{\alpha+\beta}$ for $A_\lambda=(\max\{A_1,\ldots,A_{d+1}\})^{d+1}\geq1$. 
%\beqsn
%\frac{1}{M^{(\lambda)}_{\alpha+\beta}}\le \frac{A_1^{|\alpha+\beta|+1}}
%{M^{(\kappa_1)}_{\alpha+\beta+e_j}}\le \cdots\le \frac{A_1^{|\alpha+\beta|+1}A_2^{|\alpha+\beta|+2}
%	\cdots A_{d+1}^{|\alpha+\beta|+d+1}}{M^{(\kappa_{d+1})}_{\alpha+\beta+(d+1)e_j}}=\frac{A_{\lambda}^{|\alpha+\beta|+d+1}}{M^{(\kappa)}_{\alpha+\beta+(d+1)e_j}},
%\eeqsn
%for $A_\lambda=(\max\{A_1,\ldots,A_{d+1}\})^{d+1}\geq1$. 
Hence, we deduce 
\beqsn
\frac{\|x^{\alpha+(d+1)e_j}\partial^\beta f\|_\infty}{h^{|\alpha+\beta|}M^{(\lambda)}_{\alpha+\beta}}\le%&&
%\frac{\|x^{\alpha+(d+1)e_j}\partial^\beta f\|_\infty}{h^{|\alpha+\beta|+d+1}M^{(\kappa)}_{\alpha+\beta+(d+1)e_j}}\cdot h^{d+1}\cdot
%\frac{M^{(\kappa)}_{\alpha+\beta+(d+1)e_j}}{M^{(\lambda)}_{\alpha+\beta}}\\
%\leq&&
%\frac{\|x^{\alpha+(d+1)e_j}\partial^\beta f\|_\infty}{h^{|\alpha+\beta|+d+1}M^{(\kappa)}_{\alpha+\beta+(d+1)e_j}}\cdot h^{d+1}\cdot
%\frac{M^{(\kappa)}_{\alpha+\beta+(d+1)e_j}A_1^{|\alpha+\beta|+1}}
%{M^{(\kappa_1)}_{\alpha+\beta+e_j}}\\
%\leq&&
%\frac{\|x^{\alpha+(d+1)e_j}\partial^\beta f\|_\infty}{h^{|\alpha+\beta|+d+1}M^{(\kappa)}_{\alpha+\beta+(d+1)e_j}}\cdot h^{d+1}\cdot
%\frac{M^{(\kappa)}_{\alpha+\beta+(d+1)e_j}A_1^{|\alpha+\beta|+1}A_2^{|\alpha+\beta|+2}}{M^{(\kappa_2)}_{\alpha+\beta+2e_j}}\\
%\ldots\leq&&
%\frac{\|x^{\alpha+(d+1)e_j}\partial^\beta f\|_\infty}{h^{|\alpha+\beta|+d+1}M^{(\kappa)}_{\alpha+\beta+(d+1)e_j}}\cdot h^{d+1}\\
%&&\cdot
%\frac{M^{(\kappa)}_{\alpha+\beta+(d+1)e_j}A_1^{|\alpha+\beta|+1}A_2^{|\alpha+\beta|+2}
%\cdots A_{d+1}^{|\alpha+\beta|+d+1}}{M^{(\kappa_{d+1})}_{\alpha+\beta+(d+1)e_j}}\\
%\leq&&
\frac{\|x^{\alpha+(d+1)e_j}\partial^\beta f\|_\infty}{h^{|\alpha+\beta|+d+1}
M^{(\kappa)}_{\alpha+\beta+(d+1)e_j}}\cdot h^{d+1}A_{\lambda}^{|\alpha+\beta|+d+1}.
\eeqsn

Therefore, from \eqref{5C} and the fact that ${\bf M}^{(\kappa)}\leq {\bf M}^{(\lambda)}$, we have for every $\lambda,h>0$,
\begin{equation}
\label{5CC}
\begin{split}
\frac{\|x^\alpha\partial^\beta f\|_2}{h^{|\alpha+\beta|}M^{(\lambda)}_{\alpha+\beta}}
\leq& C_1(d+1)^{\frac{d+1}{2}}
\max\bigg\{\frac{\|x^{\alpha+(d+1)e_1}\partial^\beta f\|_\infty}{h^{|\alpha+\beta|+d+1}M^{(\kappa)}_{\alpha+\beta+(d+1)e_1}}
h^{d+1}A_{\lambda}^{|\alpha+\beta|+d+1},\\
&\ldots,\frac{\|x^{\alpha+(d+1)e_d}\partial^\beta f\|_\infty}{h^{|\alpha+\beta|+d+1}M^{(\kappa)}_{\alpha+\beta+(d+1)e_d}}
h^{d+1}A_{\lambda}^{|\alpha+\beta|+d+1},
\frac{\|x^\alpha\partial^\beta f\|_\infty}{h^{|\alpha+\beta|}M^{(\kappa)}_{\alpha+\beta}}
\bigg\}.
\end{split}
\end{equation}
If $h\geq1$ then $h^{d+1}A_\lambda^{|\alpha+\beta|+d+1}\leq
(hA_\lambda)^{|\alpha+\beta|+d+1}$.
If $0<h<1$ then $h^{d+1}A_\lambda^{|\alpha+\beta|+d+1}\leq A_\lambda^{|\alpha+\beta|+d+1}$. Hence, for 
\beqsn
%\label{htilde1}
\tilde h:=\begin{cases}
\min\left\{\frac{1}{A_\lambda},h\right\}=\frac{1}{A_\lambda},&\mbox{if}\ h\geq 1,\cr
\min\left\{\frac{h}{A_\lambda},h\right\}=\frac{h}{A_\lambda},&\mbox{if}\ 0<h<1,
\end{cases}
\eeqsn
we obtain
\beqsn
\|f\|_{2,{\bf M}^{(\lambda)},h}\leq C_1(d+1)^{\frac{d+1}{2}}\|f\|_{\infty,{\bf M}^{(\kappa)},\tilde h}.
\eeqsn
This shows \eqref{dis1}. 

Now, since $\delta!\leq \delta_1^{\delta_1}\dots \delta_d^{\delta_d}=\delta^\delta$, we have
$
\frac{\alpha!}{(\alpha-\delta)!}\le \binom\alpha\delta\delta!\le 2^{|\alpha|}\delta^{\delta}.
$
So it follows by Leibnitz's rule and \cite[formula (2.3)]{L} that, for some $C_2>0$, 
%To prove \eqref{dis2} we use that
%\beqsn
%\|x^\alpha\partial^\beta f\|_\infty\leq C_2\sup_{|\gamma|_\infty\leq 2d+2}
%\|\partial^\gamma(x^\alpha\partial^\beta f)\|_2,
%\eeqsn
%for some $C_2>0$. Now, since $\delta!\leq \delta_1^{\delta_1}\dots \delta_d^{\delta_d}=\delta^\delta$, we have
%$$
%\frac{\alpha!}{(\alpha-\delta)!}\le \binom\alpha\delta\delta!\le 2^{|\alpha|}\delta^{\delta}.
%$$
%So it follows by Leibnitz's rule that
\beqs
\nonumber
\|x^\alpha\partial^\beta f\|_\infty \leq &&C_2\sup_{|\gamma|_\infty\leq 2d+2}
\|\partial^\gamma(x^\alpha\partial^\beta f)\|_2\\ \nonumber
\leq&& C_2\sup_{|\gamma|_\infty\leq 2d+2}
\sum_{\delta\leq\gamma}\binom\gamma\delta
\|(\partial^\delta x^\alpha)\partial^{\beta+\gamma-\delta}f\|_2\\
%\nonumber
%=&&C_2\sup_{|\gamma|_\infty\leq 2d+2}
%\sum_{\afrac{\delta\leq\gamma}{\delta\leq\alpha}}\binom\gamma\delta
%\frac{\alpha!}{(\alpha-\delta)!}
%\frac{\|x^{\alpha-\delta}\partial^{\beta+\gamma-\delta}f\|_2}{h^{|\alpha+\beta|}M^{(\lambda)}_{\alpha+\beta}}\\
%\nonumber
%\leq&&C_2\sup_{|\gamma|_\infty\leq 2d+2}
%\sum_{\afrac{\delta\leq\gamma}{\delta\leq\alpha}}\binom\gamma\delta
%\binom\alpha\delta\delta!
%\frac{\|x^{\alpha-\delta}\partial^{\beta+\gamma-\delta}f\|_2}{h^{|\alpha+\beta+\gamma-2\delta|}}\cdot
%\frac{h^{|\gamma-2\delta|}}{M^{(\lambda)}_{\alpha+\beta}}\\
%\eeqsn
%with $\widetilde{\kappa}$ denoting an index depending on given $\lambda$ which will be introduced in some lines below.
%
%By Stirling's formula
%\beqs
%\label{stirling}
%e\left(\frac ne\right)^n\leq n!\leq en\left(\frac ne\right)^n,\qquad\forall n\in\N,
%\eeqs
%and  $\delta^{\delta+1}\leq(2\delta)^\delta$. We
%By the fact that $\delta!\leq \delta_1^{\delta_1}\dots \delta_d^{\delta_d}=\delta^\delta$ we thus have
%, for some $C_2'>0$,
%\beqs
\label{7C}
%\frac{\|x^\alpha\partial^\beta f\|_\infty}{h^{|\alpha+\beta|}M^{(\lambda)}_{\alpha+\beta}}
\leq&&C_2
\sup_{|\gamma|_\infty\leq 2d+2}
\sum_{\afrac{\delta\leq\gamma}{\delta\leq\alpha}}\binom\gamma\delta
2^{|\alpha|}\delta^{\delta}
\|x^{\alpha-\delta}\partial^{\beta+\gamma-\delta}f\|_2\,.
\eeqs
On the other hand, by $|\gamma|$ iterated applications of \eqref{M2'B}, there exist
$0<\kappa:=\kappa_{|\gamma|}\leq\kappa_{|\gamma|-1}\leq\ldots\leq\kappa_1\leq\lambda$
and $A_1,\ldots,A_{|\gamma|}\geq1$ such that, for $A_\lambda:=\max\limits_{\vert\gamma\vert_\infty\leq 2d+2}(\max\{A_1,\ldots,A_{|\gamma|}\})^{|\gamma|}$, we have $M^{(\kappa)}_{\alpha+\beta+\gamma}\le A_\lambda^{|\alpha+\beta+\gamma|} M^{(\lambda)}_{\alpha+\beta}$.
%\beqsn
%\frac{\|x^\alpha\partial^\beta f\|_\infty}{h^{|\alpha+\beta|}M^{(\lambda)}_{\alpha+\beta}}\leq&&C_2
%\sup_{|\gamma|_\infty\leq 2d+2}
%\sum_{\afrac{\delta\leq\gamma}{\delta\leq\alpha}}\binom\gamma\delta
%2^{|\alpha|}\delta^\delta
%\frac{\|x^{\alpha-\delta}\partial^{\beta+\gamma-\delta}f\|_2}{h^{|\alpha+\beta+\gamma-2\delta|}
%}\cdot h^{|\gamma-2\delta|}
%\cdot
%\frac{A_1^{|\alpha+\beta|+1}}{M^{(\kappa_1)}_{\alpha+\beta+e_j}}\\
%\ldots\leq&&
%C_2
%\sup_{|\gamma|_\infty\leq 2d+2}
%\sum_{\afrac{\delta\leq\gamma}{\delta\leq\alpha}}\binom\gamma\delta
%\frac{\|x^{\alpha-\delta}\partial^{\beta+\gamma-\delta}f\|_2}{h^{|\alpha+\beta+\gamma-2\delta|}}\cdot 
%h^{|\gamma-2\delta|}\\
%&&\cdot 2^{|\alpha|}\delta^\delta
%\frac{A_1^{|\alpha+\beta|+1}
%\cdots A_{|\gamma|}^{|\alpha+\beta+\gamma|}}{M^{(\kappa_{|\gamma|})}_{\alpha+\beta+\gamma}}.
%\eeqsn
By \eqref{12L2B}, there exist $0<\widetilde{\kappa}\leq\kappa$ and $H>0$
such that for all $C>0$ there is $B>0$ so that 
\beqsn
\frac{\|x^\alpha\partial^\beta f\|_\infty}{h^{|\alpha+\beta|}M^{(\lambda)}_{\alpha+\beta}}\leq&&
%C'_2
%\sup_{|\gamma|_\infty\leq 2d+2}
%\sum_{\afrac{\delta\leq\gamma}{\delta\leq\alpha}}\binom\gamma\delta
%\frac{\|x^{\alpha-\delta}\partial^{\beta+\gamma-\delta}f\|_2}{h^{|\alpha+\beta+\gamma-2\delta|}
%M^{(\widetilde{\kappa})}_{\alpha+\beta+\gamma-2\delta}}\cdot h^{|\gamma-2\delta|}\\
%&&\cdot 2^{|\alpha|}A_\lambda^{|\alpha+\beta+\gamma|}
%BC^{|2\delta|}H^{|\alpha+\beta+\gamma|}
%\frac{M^{(\widetilde{\kappa})}_{\alpha+\beta+\gamma-2\delta}}{M^{(\widetilde{\kappa})}_{\alpha+\beta+\gamma
%-2\delta}}\\
%\leq&&
C_2
\sup_{|\gamma|_\infty\leq 2d+2}
\sum_{\afrac{\delta\leq\gamma}{\delta\leq\alpha}}\binom\gamma\delta
\frac{\|x^{\alpha-\delta}\partial^{\beta+\gamma-\delta}f\|_2}{h^{|\alpha+\beta+\gamma-2\delta|}
M^{(\widetilde{\kappa})}_{\alpha+\beta+\gamma-2\delta}}\cdot h^{|\gamma-2\delta|}\\
&&\cdot 2^{|\alpha|}A_\lambda^{|\alpha+\beta+\gamma|}
BC^{|2\delta|}H^{|\alpha+\beta+\gamma|}.
\eeqsn
Observe that $\widetilde{\kappa}$ may depend on $\gamma$. From \eqref{defcalM} we can consider in the previous estimates, instead of $\widetilde{\kappa}$, the minimum of all these $\widetilde{\kappa}$ for $\vert\gamma\vert_\infty\leq 2d+2$, so that we can finally choose $\widetilde{\kappa}$ independent of $\gamma$. Since $|\gamma|\leq d|\gamma|_\infty\leq2d(d+1)$ we have
\beqsn
\frac{\|x^\alpha\partial^\beta f\|_\infty}{h^{|\alpha+\beta|}M^{(\lambda)}_{\alpha+\beta}} %\leq &&C_2BC^{4d(d+1)} \sup_{\vert\gamma\vert_\infty\leq 2d+2}
%\sum_{\afrac{\delta\leq\gamma}{\delta\leq\alpha}}\binom\gamma\delta
%\frac{\|x^{\alpha-\delta}\partial^{\beta+\gamma-\delta}f\|_2}{h^{|\alpha+\beta+\gamma-2\delta|}M^{(\widetilde{\kappa})}_{\alpha+\beta+\gamma-2\delta}}\\
%&&\cdot(2HA_\lambda)^{|\alpha+\beta+\gamma-2\delta|}(2HA_\lambda)^{|2\delta|}
%h^{|\gamma-2\delta|}\\
\leq&&C_2B(2CHA_\lambda)^{4d(d+1)} \sup_{\vert\gamma\vert_\infty\leq 2d+2}
\sum_{\afrac{\delta\leq\gamma}{\delta\leq\alpha}}\binom\gamma\delta
\frac{\|x^{\alpha-\delta}\partial^{\beta+\gamma-\delta}f\|_2}{h^{|\alpha+\beta+\gamma-2\delta|}M^{(\widetilde{\kappa})}_{\alpha+\beta+\gamma-2\delta}}\\
&&\cdot(2HA_\lambda)^{|\alpha+\beta+\gamma-2\delta|}
h^{|\gamma-2\delta|}.
\eeqsn
Now, if $h\geq1$, then $h^{|\gamma-2\delta|}\leq h^{|\alpha+\beta+\gamma-2\delta|}$. And if $0<h<1$, then $h^{|\gamma-2\delta|}\leq 1$ when $|\gamma-2\delta|\ge 0$ and $h^{|\gamma-2\delta|}\le h^{-|\gamma|}
\leq h^{-2d(d+1)}$ when $|\gamma-2\delta|<0$. For
\beqs
\label{htilde2}
\tilde h=\begin{cases}
\frac{1}{2HA_\lambda}&\mbox{if}\ h\geq1\cr
\frac{h}{2HA_\lambda}&\mbox{if}\ 0<h<1,
\end{cases}
\eeqs
taking into account that
\beqsn
\sum_{\delta\leq\gamma}\binom\gamma\delta\leq d^{|\gamma|}\leq d^{2d(d+1)},
\eeqsn
we finally have that for all $\lambda,h>0$ there exist $\widetilde{\kappa}$, $C_{\lambda,h}>0$ and
$\tilde h>0$, such
that
\beqs
\label{dis2bis}
\|f\|_{\infty,{\bf M}^{(\lambda)},h}\leq C_{\lambda,h}\|f\|_{2,{\bf M}^{(\widetilde{\kappa})},\tilde h}\,.
\eeqs
Since neither $H$ nor $A_{\lambda}$ are depending on $h$, we have  $\tilde{h}\rightarrow 0$ as $h\rightarrow 0$. This shows \eqref{dis2} and concludes the proof in the Beurling case.

Let us now consider the Roumieu case. In \eqref{5C}, for any given $\lambda$ by $(d+1)$ iterated applications
of \eqref{M2'R}, we obtain  $\kappa:=\kappa_{d+1}\geq\kappa_d\geq\ldots\geq\kappa_1\geq\lambda>0$
and $A_1,\ldots,A_{d+1}\geq1$ such that, for $A_\lambda:=(\max\{A_1,\ldots,A_{d+1}\})^{d+1}$, we have $M^{(\lambda)}_{\alpha+\beta+(d+1)e_j}\le  A_\lambda^{|\alpha+\beta|+d+1}M^{(\kappa)}_{\alpha+\beta}$.
%\beqsn
%\frac{\|x^{\alpha+(d+1)e_j}\partial^\beta f\|_\infty}{M^{(\kappa)}_{\alpha+\beta}}=&&
%\frac{\|x^{\alpha+(d+1)e_j}\partial^\beta f\|_\infty}{M^{(\lambda)}_{\alpha+\beta+(d+1)e_j}}\cdot
%\frac{M^{(\lambda)}_{\alpha+\beta+(d+1)e_j}}{M^{(\kappa)}_{\alpha+\beta}}\\
%\leq&&
%\frac{\|x^{\alpha+(d+1)e_j}\partial^\beta f\|_\infty}{M^{(\lambda)}_{\alpha+\beta+(d+1)e_j}}\cdot
%\frac{A_1^{|\alpha+\beta|+d+1}M^{(\kappa_1)}_{\alpha+\beta+de_j}}{M^{(\kappa)}_{\alpha+\beta}}\\
%\ldots\leq&&
%\frac{\|x^{\alpha+(d+1)e_j}\partial^\beta f\|_\infty}{M^{(\lambda)}_{\alpha+\beta+(d+1)e_j}}\cdot
%\frac{A_1^{|\alpha+\beta|+d+1}\cdots A_{d+1}^{|\alpha+\beta|+1}M^{(\kappa_{d+1})}_{\alpha+\beta}}{M^{(\kappa)}_{\alpha+\beta}}\\
%\leq&&
%\frac{\|x^{\alpha+(d+1)e_j}\partial^\beta f\|_\infty}{M^{(\lambda)}_{\alpha+\beta+(d+1)e_j}}\cdot A_\lambda^{|\alpha+\beta|+d+1}
%\eeqsn
%for $A_\lambda:=(\max\{A_1,\ldots,A_{d+1}\})^{d+1}$. 
Then from \eqref{5C} and the fact that $M^{(\kappa)}_{\alpha+\beta}\geq M^{(\lambda)}_{\alpha+\beta}$ we obtain, given a fixed $h>0$, for $\tilde{h}:=\max\{ hA_\lambda,1\}$,
\beqsn
\frac{\Vert x^\alpha\partial^\beta f\Vert_2}{M^{(\kappa)}_{\alpha+\beta}}&\leq&  C_1 (d+1)^{\frac{d+1}{2}} \max\Biggr\{ \frac{\Vert x^\alpha \partial^\beta f\Vert_\infty}{M^{(\kappa)}_{\alpha+\beta}},\ \frac{\Vert x^{\alpha+(d+1)e_j}\partial^\beta f\Vert_\infty}{M^{(\lambda)}_{\alpha+\beta+(d+1)e_j}} A_\lambda^{\vert\alpha+\beta\vert+d+1}\Biggr\} \\
&\leq& C_1 (d+1)^{\frac{d+1}{2}} \tilde{h}^{\vert\alpha+\beta\vert+d+1} \max\Biggr\{ \frac{\Vert x^\alpha \partial^\beta f\Vert_\infty}{h^{\vert\alpha+\beta\vert}M^{(\lambda)}_{\alpha+\beta}},\ \frac{\Vert x^{\alpha+(d+1)e_j}\partial^\beta f\Vert_\infty}{h^{\vert\alpha+\beta\vert+d+1}M^{(\lambda)}_{\alpha+\beta+(d+1)e_j}} \Biggr\}.
%C_1 (d+1)^{\frac{d+1}{2}} \max\Biggr\{ \frac{\Vert x^\alpha \partial^\beta f\Vert_\infty}{h^{\vert\alpha+\beta\vert}M^{(\lambda)}_{\alpha+\beta}}h^{\vert\alpha+\beta\vert},\ \frac{\Vert x^{\alpha+(d+1)e_j}\partial^\beta f\Vert_\infty}{h^{\vert\alpha+\beta\vert+d+1}M^{(\lambda)}_{\alpha+\beta+(d+1)e_j}} (hA_\lambda)^{\vert\alpha+\beta\vert+d+1}\Biggr\};
\eeqsn
%now, for $\tilde{h}=\max\{ hA_\lambda,1\}$,
%\beqsn
%\frac{\Vert x^\alpha\partial^\beta f\Vert_2}{M^{(\kappa)}_{\alpha+\beta}}\leq C_1 (d+1)^{\frac{d+1}{2}} \tilde{h}^{\vert\alpha+\beta\vert+d+1} \max\Biggr\{ \frac{\Vert x^\alpha \partial^\beta f\Vert_\infty}{h^{\vert\alpha+\beta\vert}M^{(\lambda)}_{\alpha+\beta}},\ \frac{\Vert x^{\alpha+(d+1)e_j}\partial^\beta f\Vert_\infty}{h^{\vert\alpha+\beta\vert+d+1}M^{(\lambda)}_{\alpha+\beta+(d+1)e_j}} \Biggr\},
%\eeqsn
Hence, dividing by $\tilde{h}^{\vert\alpha+\beta\vert}$,
\beqsn
\frac{\Vert x^\alpha\partial^\beta f\Vert_2}{\tilde{h}^{\vert\alpha+\beta\vert}M^{(\kappa)}_{\alpha+\beta}}\leq C_1 (d+1)^{\frac{d+1}{2}} \tilde{h}^{d+1} \Vert f\Vert_{\infty,{\bf M}^{(\lambda)},h};
\eeqsn
then \eqref{dis11} is proved, with $C_{\lambda,h}=C_1 (d+1)^{\frac{d+1}{2}} \tilde{h}^{d+1}$ (observe that $\tilde{h}$ depends on $h$ and $\lambda$).

Now,  given any  $\lambda>0$ consider $\kappa\geq\lambda>0$ and $B,C,H>0$
as in \eqref{12L2R}. Then, by $|\gamma|$ iterated applications of \eqref{M2'R}, there exist
$\widetilde{\kappa}:=\kappa_{|\gamma|}\geq\ldots\geq\kappa_1\geq\kappa\geq\lambda$ and
$A_1,\ldots,A_{|\gamma|}\geq1$ such that, for $A_\lambda:=(\max\{A_1,\ldots,A_{|\gamma|}\})^{|\gamma|}$, $M^{(\kappa)}_{\alpha+\beta+\gamma}\leq A_\lambda^{|\alpha+\beta+\gamma|}M^{(\tilde\kappa)}_{\alpha+\beta}.$ 
%\beqs
%\nonumber
%M^{(\kappa)}_{\alpha+\beta+\gamma}\leq&&
%\nonumber
%A_1^{|\alpha+\beta+\gamma|}M^{(\kappa_1)}_{\alpha+\beta+\gamma-e_j}\\
%\nonumber
%\leq&&A_1^{|\alpha+\beta+\gamma|}A_2^{|\alpha+\beta+\gamma|-1} M^{(\kappa_2)}_{\alpha+\beta+\gamma-2e_j}\\
%\nonumber
%\ldots\leq&&A_1^{|\alpha+\beta+\gamma|}A_2^{|\alpha+\beta+\gamma|-1}
%\cdots A_{|\gamma|}^{|\alpha+\beta|+1}
%M^{(\kappa_{|\gamma|})}_{\alpha+\beta+\gamma-\sum_{j=1}^d\gamma_je_j}\\
%\label{est1}
%\leq&&A_\lambda^{|\alpha+\beta+\gamma|}M^{(\tilde\kappa)}_{\alpha+\beta}
%\eeqs
%for $A_\lambda:=(\max\{A_1,\ldots,A_{|\gamma|}\})^{|\gamma|}$.
So, from \eqref{7C} with $h=1$ and $\tilde\kappa$ instead of $\lambda$,
applying \eqref{12L2R} and proceeding as before, we get
\beqs
\label{19dec2}
\frac{\|x^\alpha\partial^\beta f\|_\infty}{M^{(\tilde\kappa)}_{\alpha+\beta}}\leq %&&
%C_2\sup_{|\gamma|_\infty\leq 2d+2}
%\sum_{\afrac{\delta\leq\gamma}{\delta\leq\alpha}}\binom\gamma\delta
%2^{|\alpha|}\delta^{\delta}
%\frac{\|x^{\alpha-\delta}\partial^{\beta+\gamma-\delta}f\|_2}{M^{(\tilde\kappa)}_{\alpha+\beta}}\\
%\nonumber
%\leq&&C_2 \sup_{|\gamma|_\infty\leq 2d+2}
%\sum_{\afrac{\delta\leq\gamma}{\delta\leq\alpha}}\binom\gamma\delta
%2^{|\alpha|}\delta^\delta
%\frac{\|x^{\alpha-\delta}\partial^{\beta+\gamma-\delta}f\|_2}{M^{(\kappa)}_{\alpha+\beta+\gamma}}
%A_\lambda^{|\alpha+\beta+\gamma|}\\
%\nonumber
%\leq&& 
%C_2\sup_{|\gamma|_\infty\leq 2d+2}
%\sum_{\afrac{\delta\leq\gamma}{\delta\leq\alpha}}\binom\gamma\delta
%2^{|\alpha|}BC^{|2\delta|}H^{|\alpha+\beta+\gamma|}
%\frac{\|x^{\alpha-\delta}\partial^{\beta+\gamma-\delta}f\|_2}{M^{(\lambda)}_{\alpha+\beta+\gamma-2\delta}}
%A_\lambda^{|\alpha+\beta+\gamma|}\\
%\nonumber
%\leq&& 
C_2 BC^{4d(d+1)}
\sup_{|\gamma|_\infty\leq 2d+2}
\sum_{\afrac{\delta\leq\gamma}{\delta\leq\alpha}}\binom\gamma\delta
(2A_\lambda H)^{|\alpha+\beta+\gamma|}
\frac{\|x^{\alpha-\delta}\partial^{\beta+\gamma-\delta}f\|_2}{M^{(\lambda)}_{\alpha+\beta+\gamma-2\delta}}\,.
\eeqs
Since for every $h>0$ and $\alpha,\beta,\gamma,\delta$ as above
\beqsn
\frac{\|x^{\alpha-\delta}\partial^{\beta+\gamma-\delta}f\|_2}{h^{\vert\alpha+\beta+\gamma-2\delta\vert} M^{(\lambda)}_{\alpha+\beta+\gamma-2\delta}}\leq \Vert f\Vert_{2,{\bf M}^{(\lambda)},h}\,,
\eeqsn
dividing \eqref{19dec2} by $(2A_\lambda Hh)^{\vert\alpha+\beta\vert}$ we obtain
\beqsn
\frac{\Vert x^\alpha\partial^\beta f\Vert_{\infty}}{(2A_\lambda Hh)^{\vert\alpha+\beta\vert} M^{(\widetilde{\kappa})}_{\alpha+\beta}}\leq \Vert f\Vert_{2,{\bf M}^{(\lambda)},h}\, C_2 BC^{4d(d+1)}
\sup_{|\gamma|_\infty\leq 2d+2}
\sum_{\delta\leq\gamma}\binom\gamma\delta
(2A_\lambda Hh)^{|\gamma|} h^{-\vert 2\delta\vert}.
\eeqsn
Taking the $\sup$ on $\alpha$ and $\beta$ in the left-hand side, we then get \eqref{dis4} with $\tilde{h}=2A_\lambda Hh$ and
\beqsn
C_{\lambda,h}=C_2 BC^{4d(d+1)}
\sup_{|\gamma|_\infty\leq 2d+2}
\sum_{\delta\leq\gamma}\binom\gamma\delta
(2A_\lambda Hh)^{|\gamma|} h^{-\vert 2\delta\vert}.
\eeqsn
\end{proof}

We observe  that in \eqref{dis1} the constant $C_1$ is fixed (it depends only on the dimension d), and moreover we only need \eqref{M2'B} to prove it.
On the other hand, to obtain \eqref{dis2} we consider \eqref{M2'B} and \eqref{12L2B}.
In the Roumieu case we just need \eqref{M2'R} to prove \eqref{dis11}, while
for the proof of \eqref{dis4} we use \eqref{12L2R} to choose $\kappa\geq\lambda$ 
and then \eqref{M2'R} to get
$\tilde \kappa\geq\kappa$.

\section{Hermite functions: properties in the matrix setting}
\label{sec3}

We recall the definition of the {\em Hermite functions} $H_\gamma$ for $\gamma\in\N_0^d$:
\beqsn
H_\gamma(x):=(2^{|\gamma|}\gamma!\pi^{d/2})^{-1/2}h_\gamma(x)
\exp\left(-\sum_{j=1}^d\frac{x_j^2}{2}\right),\qquad x\in\R^d,
\eeqsn
where $h_\gamma$ are the Hermite polynomials
\beqsn
h_\gamma(x):=(-1)^{|\gamma|}\exp\left(\sum_{j=1}^dx_j^2\right)\cdot\partial^\gamma
\exp\left(-\sum_{j=1}^dx_j^2\right),\qquad x\in\R^d.
\eeqsn
As in \cite{L} we consider, for $f\in C^\infty(\R^d)$, the operators
\beqsn
&&A_{\pm,i}(f):=\mp\partial_{x_i}f+x_i f,\qquad 1\leq i\leq d,\\
&&A_{\pm}^\alpha(f):=\prod_{i=1}^dA_{\pm,i}^{\alpha_i}(f),\qquad\alpha\in\N_0^d,
\eeqsn
with $A_{\pm,i}^0:=\id$.

By \cite[Example 29.5(2)]{MV} setting $H_\beta=0$ if $\beta_j=-1$ for some
$1\leq j\leq d$, we have
\beqsn
A_{-,j}(H_\gamma)=\sqrt{2\gamma_j}H_{\gamma-e_j},\quad\gamma\in\N_0^d.
\eeqsn
It follows that
\beqs
\label{15C}
\ A_{-}^\alpha(H_{\gamma+\alpha})=\prod_{1\leq j\leq d}A_{-,j}^{\alpha_j}(H_{\gamma+\alpha})
=\prod_{1\leq j\leq d}(\sqrt{2\gamma_j})^{\alpha_j}H_\gamma
=\sqrt{2^{|\alpha|}\gamma^\alpha}H_\gamma\,,\quad \alpha,\gamma\in\N_0^d.
\eeqs

We also recall the following two lemmas from \cite{L}:
\begin{Lemma}
\label{lemma31aL}
Let $f\in C^\infty(\R^d)$. Then, for all $\gamma\in\N_0^d$ and $x\in\R^d$,
\beqsn
(A_+^\gamma f)(x)=\sum_{\alpha+\beta\leq\gamma}C_{\alpha,\beta}(\gamma)
x^\alpha\partial^\beta f(x),
\eeqsn
for some coefficients $C_{\alpha,\beta}(\gamma)$ satisfying
\beqsn
|C_{\alpha,\beta}(\gamma)|\leq 3^{|\gamma|}\left(\frac{\gamma!}{(\alpha+\beta)!}\right)^{1/2}
\!\!\!,\qquad
\alpha,\beta,\gamma\in\N_0^d.
\eeqsn
\end{Lemma}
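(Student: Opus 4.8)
The plan is to reduce the statement to a one–dimensional computation and then multiply across coordinates. Since $A_{+,i}=x_i-\partial_{x_i}$ involves only the variable $x_i$, the operators $A_{+,1},\dots,A_{+,d}$ commute pairwise, so $A_+^\gamma=\prod_{i=1}^d A_{+,i}^{\gamma_i}$ acts coordinate by coordinate. Thus it suffices to establish, in a single variable, an expansion $A_{+,i}^{n}g=\sum_{a+b\leq n}c_{a,b}(n)\,x_i^{a}\partial_{x_i}^{b}g$ together with the bound $|c_{a,b}(n)|\leq 3^{n}(n!/(a+b)!)^{1/2}$: applying the factors $A_{+,i}^{\gamma_i}$ to $f$ successively, and using that each such factor commutes with the monomials $x_j^{a_j}\partial_{x_j}^{b_j}$ produced by the other coordinates, one gets $C_{\alpha,\beta}(\gamma)=\prod_{i=1}^d c_{\alpha_i,\beta_i}(\gamma_i)$. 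The claimed estimate then follows at once from the multiplicativity $3^{|\gamma|}=\prod_{i}3^{\gamma_i}$ and $\gamma!/(\alpha+\beta)!=\prod_{i}\gamma_i!/(\alpha_i+\beta_i)!$, so I only need to treat $d=1$.

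For $d=1$ I would write $A:=x-\partial$ and prove $A^{n}f=\sum_{a+b\leq n}c_{a,b}(n)x^{a}\partial^{b}f$ by induction on $n$, the case $n=0$ being trivial with $c_{0,0}(0)=1$. From $(x-\partial)(x^{a}\partial^{b}f)=x^{a+1}\partial^{b}f-a\,x^{a-1}\partial^{b}f-x^{a}\partial^{b+1}f$ one reads off the recursion $c_{a,b}(n+1)=c_{a-1,b}(n)-(a+1)c_{a+1,b}(n)-c_{a,b-1}(n)$, with the convention that a coefficient carrying a negative index is $0$; this simultaneously shows that the support condition $a+b\leq n+1$ is preserved. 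Since the conclusion concerns only the absolute values $|c_{a,b}(n)|$, the particular signs in the recursion are irrelevant in what follows.

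It then remains to prove $|c_{a,b}(n)|\leq 3^{n}(n!/(a+b)!)^{1/2}$ by induction on $n$, the base case being immediate. Assuming the estimate for $n$ and writing $m:=a+b\leq n$, the recursion gives $|c_{a,b}(n+1)|\leq|c_{a-1,b}(n)|+(a+1)|c_{a+1,b}(n)|+|c_{a,b-1}(n)|$; inserting the inductive bounds and factoring out $3^{n}(n!/m!)^{1/2}$ reduces everything to the single elementary inequality $2m^{1/2}+(a+1)(m+1)^{-1/2}\leq 3(n+1)^{1/2}$. This step is the only real obstacle, and it is exactly where the constant $3$ and the square-root factorial normalization are calibrated: I would bound $2m^{1/2}\leq 2(n+1)^{1/2}$ using $m\leq n$, and $(a+1)(m+1)^{-1/2}\leq(m+1)^{1/2}\leq(n+1)^{1/2}$ using $a+1\leq m+1\leq n+1$, so that the sum is at most $3(n+1)^{1/2}$. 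Multiplying back by $3^{n}(n!/m!)^{1/2}$ yields $3^{n+1}((n+1)!/m!)^{1/2}$, which closes the induction and, after the coordinatewise product of the first paragraph, completes the proof.
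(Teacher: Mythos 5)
The paper itself gives no proof of this lemma (it is quoted verbatim from Langenbruch \cite{L}), so there is nothing to compare against; your self-contained argument --- reduction to dimension one via commutativity of the operators $A_{+,i}$, then induction on $n$ through the recursion $c_{a,b}(n+1)=c_{a-1,b}(n)-(a+1)c_{a+1,b}(n)-c_{a,b-1}(n)$ --- is a legitimate route, and both the reduction step and the recursion are correct. However, the induction has a genuine gap at top degree. At step $n+1$ the coefficients are supported in $\{(a,b):a+b\le n+1\}$, and the coefficients with $a+b=n+1$ are in general nonzero (for instance $c_{n+1,0}(n+1)=1$, the coefficient of $x^{n+1}f$), yet you declare ``$m:=a+b\le n$'' and your key inequality $2m^{1/2}+(a+1)(m+1)^{-1/2}\le 3(n+1)^{1/2}$ is proved --- and is only true --- for $m\le n$: for $m=n+1$ it can fail outright, e.g.\ $n=0$, $a=1$, $b=0$ gives $2+\sqrt{2}>3$. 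So, as written, the induction never establishes the bound for the top-degree coefficients, and the step that is supposed to handle them would not go through if one kept all three terms of the recursion.

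The repair is short and uses the support property you already proved: if $a+b=n+1$, then $(a+1)+b=n+2>n$, so $c_{a+1,b}(n)=0$ and the middle term of the recursion is absent; hence $|c_{a,b}(n+1)|\le|c_{a-1,b}(n)|+|c_{a,b-1}(n)|\le 2\cdot 3^{n}\left(n!/n!\right)^{1/2}=2\cdot 3^{n}\le 3^{n+1}=3^{n+1}\left((n+1)!/(n+1)!\right)^{1/2}$, as required. Adding this boundary case (and reading the inductive hypothesis as valid for all pairs $(a,b)$, with coefficients outside the support equal to zero, so that every term of the recursion can be estimated) closes the induction, and the rest of your argument --- in particular the factorization $C_{\alpha,\beta}(\gamma)=\prod_{i=1}^d c_{\alpha_i,\beta_i}(\gamma_i)$ and the multiplicativity of the bound over coordinates --- is sound.
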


\begin{Lemma}
\label{lemma32aL}
For all $\alpha,\beta,\gamma\in\N_0^d$
\beqsn
\|x^\alpha\partial^\beta H_\gamma\|_2\leq 2^{\frac{|\alpha+\beta|}{2}}
\left(\frac{(\alpha+\beta+\gamma)!}{\gamma!}\right)^{1/2}.
\eeqsn
\end{Lemma}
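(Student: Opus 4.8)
The plan is to reduce to the one–dimensional case and then expand $x^\alpha\partial^\beta$ in terms of the operators $A_{\pm,i}$. Since $H_\gamma(x)=\prod_{i=1}^d H_{\gamma_i}(x_i)$ factorizes as a tensor product of one–variable Hermite functions and the operator $x^\alpha\partial^\beta=\prod_i x_i^{\alpha_i}\partial_{x_i}^{\beta_i}$ acts coordinatewise, we have $\|x^\alpha\partial^\beta H_\gamma\|_2=\prod_{i=1}^d\|x_i^{\alpha_i}\partial_{x_i}^{\beta_i}H_{\gamma_i}\|_{L^2(\R)}$. Hence it suffices to prove the one–variable estimate $\|x^a\partial^b H_n\|_2\le 2^{(a+b)/2}\big((n+a+b)!/n!\big)^{1/2}$ for all $a,b,n\in\N_0$ and then multiply over $i=1,\dots,d$.

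In one variable I would write $2x=A_++A_-$ and $2\partial=A_--A_+$, where $A_\pm=\mp\partial+x$ are the one–dimensional versions of $A_{\pm,1}$. Recall $A_-H_n=\sqrt{2n}\,H_{n-1}$ and, dually (as $A_+$ is the formal adjoint of $A_-$), $A_+H_n=\sqrt{2(n+1)}\,H_{n+1}$. Expanding the product gives $x^a\partial^b=2^{-k}\sum_W\varepsilon_W\,W$ with $k:=a+b$, where $W$ runs over the $2^k$ ordered words of length $k$ in the letters $A_+,A_-$ and $\varepsilon_W\in\{-1,+1\}$ are the signs coming from $(A_--A_+)^b$. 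Applying any such word to $H_n$ produces a single Hermite function: $WH_n=\pm c_W H_{\delta(W)}$ with $c_W\ge0$, so that $\|WH_n\|_2=c_W$.

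The key step is the uniform bound $c_W\le 2^{k/2}\big((n+k)!/n!\big)^{1/2}=:M$ for every word $W$. This rests on the symmetry that passing between the levels $m$ and $m+1$ in either direction contributes exactly the factor $\sqrt{2(m+1)}$ (an up–move $A_+$ from level $m$ gives $\sqrt{2(m+1)}$, a down–move $A_-$ from level $m+1$ gives $\sqrt{2(m+1)}$). Thus $c_W=\prod_{j=1}^k\sqrt{2\ell_j}$, where $\ell_j$ denotes the upper endpoint of the $j$–th traversed edge. After $j-1$ steps the level has moved from $n$ by at most $j-1$, so the current level is $\le n+j-1$ and hence $\ell_j\le n+j$; multiplying these inequalities yields $c_W\le 2^{k/2}\prod_{j=1}^k\sqrt{n+j}=M$.

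Finally, the triangle inequality together with $\|WH_n\|_2=c_W\le M$ and the count of $2^k$ words gives $\|x^a\partial^b H_n\|_2\le 2^{-k}\sum_W c_W\le 2^{-k}\cdot 2^k M=M$, which is exactly the one–variable estimate; tensorizing then completes the proof. I expect the main obstacle to be the per–word bound $c_W\le M$: one must notice the edge symmetry and the elementary fact $\ell_j\le n+j$, after which the apparently wasteful triangle inequality over all $2^k$ words is precisely compensated by the prefactor $2^{-k}$.
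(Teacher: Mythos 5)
Your proposal is correct: the reduction to one variable by tensorization is legitimate (both sides of the inequality factor over the coordinates), the expansion $x^a\partial^b=2^{-k}(A_++A_-)^a(A_--A_+)^b$ into $2^k$ signed words is valid, the per-word bound $c_W\le 2^{k/2}\bigl((n+k)!/n!\bigr)^{1/2}$ follows exactly as you say from the edge symmetry and the fact that after $j-1$ steps the level is at most $n+j-1$, and the prefactor $2^{-k}$ absorbs the count of words in the triangle inequality. Note that the paper itself gives no proof of this lemma (it is recalled from \cite{L}); your argument is essentially the standard ladder-operator proof used there, built on the same operators $A_{\pm,i}$ and the relation $A_{-,j}(H_\gamma)=\sqrt{2\gamma_j}\,H_{\gamma-e_j}$ that the paper introduces immediately before the statement, so it is fully consistent with the paper's framework.
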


We can generalize Lemma 3.1(b) of \cite{L} in the following way:
\begin{Lemma}
\label{lemma33G}
Let $\bfM=(M_\alpha)_{\alpha\in\N_0^d}$ and $\bfN=(N_\alpha)_{\alpha\in\N_0^d}$ be
two sequences satisfying \eqref{12L1R} for some $C,B,H>0$.
Assume that $f\in C^\infty(\R^d)$ satisfies, for some $C_1>0$ and for the same constant $C$ as in
\eqref{12L1R},
\beqs
\label{31G}
\|f\|_{2,\bfM,C}=\sup_{\alpha,\beta\in\N_0^d}\frac{\|x^\alpha\partial^\beta f\|_2}{C^{|\alpha+\beta|}M_{\alpha+\beta}}\leq C_1.
\eeqs
Then
\beqsn
\|A_+^\gamma f\|_2\leq C_1Be^{d/2}(9\sqrt{2}HC)^{|\gamma|}N_\gamma,
\quad\gamma\in\N_0^d.
\eeqsn
\end{Lemma}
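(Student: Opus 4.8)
The plan is to bound $\|A_+^\gamma f\|_2$ by expanding $A_+^\gamma f$ using Lemma~\ref{lemma31aL}, estimating each summand $\|x^\alpha\partial^\beta f\|_2$ through the hypothesis \eqref{31G}, and then collapsing the resulting factorial expressions with the mixed condition \eqref{12L1R}. Concretely, by Lemma~\ref{lemma31aL} we have
\beqsn
\|A_+^\gamma f\|_2\leq\sum_{\alpha+\beta\leq\gamma}|C_{\alpha,\beta}(\gamma)|\,\|x^\alpha\partial^\beta f\|_2
\leq 3^{|\gamma|}\sum_{\alpha+\beta\leq\gamma}\left(\frac{\gamma!}{(\alpha+\beta)!}\right)^{1/2}\|x^\alpha\partial^\beta f\|_2.
\eeqsn
First I would apply the assumption \eqref{31G}, which gives $\|x^\alpha\partial^\beta f\|_2\leq C_1 C^{|\alpha+\beta|}M_{\alpha+\beta}$, so that the summand is controlled by
$C_1\,3^{|\gamma|}C^{|\alpha+\beta|}(\gamma!/(\alpha+\beta)!)^{1/2}M_{\alpha+\beta}$.

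The key combinatorial step is to relate $(\gamma!/(\alpha+\beta)!)^{1/2}M_{\alpha+\beta}$ to $N_\gamma$. Writing $\delta:=\gamma-(\alpha+\beta)\ge 0$, one has $\gamma!/(\alpha+\beta)!=\prod_j\gamma_j!/(\gamma_j-\delta_j)!\leq\gamma^\delta\leq\gamma^\gamma$ componentwise, but the sharp route is to use $(\gamma!/(\alpha+\beta)!)^{1/2}\leq(\gamma^{\gamma}/(\alpha+\beta)^{(\alpha+\beta)})^{1/2}$ type bounds so as to produce a factor of the form $\delta^{\delta/2}$. The idea is that \eqref{12L1R}, applied with the roles $\delta^{\delta/2}M_{\alpha+\beta}\leq BC^{|\delta|}H^{|\alpha+\beta+\delta|}N_{\alpha+\beta+\delta}=BC^{|\delta|}H^{|\gamma|}N_\gamma$, converts exactly this product into a multiple of $N_\gamma$, since $(\alpha+\beta)+\delta=\gamma$. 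I expect to bound the leftover factorial $\gamma!/((\alpha+\beta)!\,\delta!)=\binom{\gamma}{\alpha+\beta}$ and the discrepancy between $(\gamma!/(\alpha+\beta)!)^{1/2}$ and $\delta^{\delta/2}$ using elementary inequalities such as $\delta!\leq\delta^\delta$ and $\gamma!/(\alpha+\beta)!\leq\gamma!\leq|\gamma|!\leq d^{|\gamma|}\gamma^{\gamma}$-style estimates, collecting all the purely exponential-in-$|\gamma|$ constants (the $3^{|\gamma|}$, the powers of $\sqrt 2$, $H$, $C$) into the claimed factor $(9\sqrt{2}HC)^{|\gamma|}$.

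After the per-summand estimate reads $C_1 B e^{?}(\text{const})^{|\gamma|}N_\gamma$, I would handle the sum over $\{\alpha+\beta\leq\gamma\}$. The number of such pairs, or more precisely the sum $\sum_{\alpha+\beta\leq\gamma}1$, is of order a product $\prod_j(\gamma_j+1)^2$, and this polynomial count must be absorbed; the cleaner method is to arrange the bound so that summing over $\delta\le\gamma$ of a term like $(\text{something})^{|\delta|}/\delta!$ produces a convergent exponential series whose total is at most $e^{d/2}$ (or a comparable dimensional constant), which is presumably the origin of the factor $e^{d/2}$ in the statement. \textbf{The main obstacle} I anticipate is precisely this bookkeeping: matching the factorial ratio $(\gamma!/(\alpha+\beta)!)^{1/2}$ against the weight $\delta^{\delta/2}$ demanded by \eqref{12L1R} while simultaneously ensuring the sum over all admissible $(\alpha,\beta)$ converges to a dimension-only constant, so that the final exponential base is exactly $9\sqrt 2 HC$ and no stray $|\gamma|$-dependent polynomial survives. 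Controlling these constants tightly — rather than merely obtaining \emph{some} constant to the power $|\gamma|$ — is where the careful Stirling-type and binomial estimates will be needed.
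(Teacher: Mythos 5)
Your proposal is correct and follows essentially the same route as the paper: expand $A_+^\gamma f$ by Lemma~\ref{lemma31aL}, insert the hypothesis \eqref{31G}, write $\gamma!/(\alpha+\beta)!=\binom{\gamma}{\alpha+\beta}\,\delta!$ with $\delta=\gamma-\alpha-\beta$, bound $\delta!^{1/2}\le\delta^{\delta/2}$ (the paper uses Stirling here instead, which is the actual origin of the $e^{d/2}$ and the $\sqrt{2}$ in the statement), and apply \eqref{12L1R} with the roles $\delta^{\delta/2}M_{\alpha+\beta}\le BC^{|\delta|}H^{|\gamma|}N_\gamma$, exactly as you indicate. One bookkeeping correction: the sum over $\{\alpha+\beta\le\gamma\}$ is not absorbed into a dimension-only constant, but via $\sum_{\alpha+\beta\le\gamma}\binom{\gamma}{\alpha+\beta}\le 3^{|\gamma|}$ (which the paper cites from Langenbruch), and this supplies the second factor of $3$ in $9^{|\gamma|}=3^{|\gamma|}\cdot 3^{|\gamma|}$; your route via $\delta!\le\delta^{\delta}$ then lands inside the claimed bound without needing $e^{d/2}$ at all.
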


\begin{proof}
By Stirling's inequality $e\left(\frac{n}{e}\right)^n\leq n!\leq en\left(\frac{n}{e}\right)^n$ for any $n\in \N$. Hence, by Lemma~\ref{lemma31aL} and the  assumption \eqref{31G}, we have 
\beqsn
\|A_+^\gamma f\|_2\leq&&\sum_{\alpha+\beta\leq\gamma}|C_{\alpha,\beta}(\gamma)|\cdot\|x^\alpha
\partial^\beta f\|_2\\
\leq&& C_13^{|\gamma|}\sum_{\alpha+\beta\leq\gamma}\binom{\gamma}{\alpha+\beta}^{1/2}
(\gamma-\alpha-\beta)!^{1/2}C^{|\alpha+\beta|}M_{\alpha+\beta}\\
\leq&&C_13^{|\gamma|}\sum_{\alpha+\beta\leq\gamma}\binom{\gamma}{\alpha+\beta}^{1/2}
e^{d/2}\left(\prod_{j=1}^d(\gamma_j-\alpha_j-\beta_j)^{1/2}\right) \frac{(\gamma-\alpha-\beta)^{\frac{\gamma-\alpha-\beta}{2}}}{\exp\left\{\frac{\vert\gamma-\alpha-\beta\vert}{2}\right\}} C^{|\alpha+\beta|}
M_{\alpha+\beta}\\
%\leq&&C_1
%3^{|\gamma|}e^{d/2}
%\sum_{\alpha+\beta\leq\gamma}\binom{\gamma}{\alpha+\beta}^{1/2}
%\left(\prod_{j=1}^d \gamma_j^{1/2}\right)(\gamma-\alpha-\beta)^{\frac{\gamma-\alpha-\beta}{2}}C^{|\alpha+\beta|}
%M_{\alpha+\beta}\\
\leq&&C_13^{|\gamma|}2^{|\gamma|/2}e^{d/2}
\sum_{\alpha+\beta\leq\gamma}\binom{\gamma}{\alpha+\beta}
(\gamma-\alpha-\beta)^{\frac{\gamma-\alpha-\beta}{2}}C^{|\alpha+\beta|}
M_{\alpha+\beta}.
\eeqsn

Applying now \eqref{12L1R} and $\sum_{\alpha+\beta\leq\gamma}
\binom{\gamma}{\alpha+\beta}\leq 3^{|\gamma|}$ (by
\cite[pg 274]{L}), we get
\beqsn
\|A_+^{\gamma} f\|_2\leq&&C_1 e^{d/2}(3\sqrt{2})^{|\gamma|}
\sum_{\alpha+\beta\leq\gamma}\binom{\gamma}{\alpha+\beta}
BC^{|\gamma-\alpha-\beta|}H^{|\gamma|}N_\gamma C^{|\alpha+\beta|}\\
\leq&&C_1B e^{d/2}(9\sqrt{2})^{|\gamma|}(CH)^{|\gamma|}N_\gamma.
\eeqsn
\end{proof}

As a corollary, we immediately have the following
\begin{Lemma}
\label{cor34G}
Let $\M$ be a weight matrix satisfying \eqref{12L2R} and assume that $f\in C^\infty(\R^d)$
satisfies, for some $\lambda,C_1>0$
\beqs
\label{31'G}
\|f\|_{2,\bfM^{(\lambda)},C}\leq C_1
\eeqs
for the constant $C$ of \eqref{12L2R}. Then
\beqsn
\|A_+^\gamma f\|_2\leq C_1B e^{d/2}(9\sqrt{2}HC)^{|\gamma|}M^{(\kappa)}_\gamma,
\qquad\forall\gamma\in\N_0^d,
\eeqsn
with $\kappa,B,H,C$ as in \eqref{12L2R}.

If $\M$ satisfies \eqref{12L2B} and if, for some $\lambda>0$, $f\in C^\infty(\R^d)$
satisfies
\beqsn
\|f\|_{2,\bfM^{(\kappa)},C}\leq C_1
\eeqsn
for the constant $\kappa\leq\lambda$ of \eqref{12L2B} and for some $C,C_1>0$, then
\beqsn
\|A_+^\gamma f\|_2\leq C_1B e^{d/2}(9\sqrt{2}HC)^{|\gamma|}M^{(\lambda)}_\gamma,
\qquad\forall\gamma\in\N_0^d,
\eeqsn
where $H=H(\lambda)$ and $B=B(C,\lambda)$ are given by \eqref{12L2B}.
\end{Lemma}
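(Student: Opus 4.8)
The plan is to obtain both assertions as immediate instances of Lemma~\ref{lemma33G}, once the two sequences $\bfM$ and $\bfN$ appearing there are identified correctly with entries of the weight matrix $\M$. The only work is to match the hypotheses and to keep track of which mixed condition, \eqref{12L2R} or \eqref{12L2B}, supplies the required inequality \eqref{12L1R}, together with the constants $B,C,H$ and the comparison index $\kappa$.

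For the Roumieu case I would argue as follows. Fix $\lambda>0$ and invoke \eqref{12L2R} to produce $\kappa\ge\lambda$ and constants $B,C,H>0$ with
\begin{equation*}
\alpha^{\alpha/2}M^{(\lambda)}_\beta\le BC^{|\alpha|}H^{|\alpha+\beta|}M^{(\kappa)}_{\alpha+\beta},\qquad\alpha,\beta\in\N_0^d.
\end{equation*}
This is precisely relation \eqref{12L1R} for the pair $\bfM:=\bfM^{(\lambda)}$ and $\bfN:=\bfM^{(\kappa)}$. Since by assumption $f$ satisfies $\|f\|_{2,\bfM^{(\lambda)},C}\le C_1$ with exactly this constant $C$, the hypothesis \eqref{31G} of Lemma~\ref{lemma33G} holds for $\bfM=\bfM^{(\lambda)}$. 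Applying that lemma yields $\|A_+^\gamma f\|_2\le C_1Be^{d/2}(9\sqrt2\,HC)^{|\gamma|}M^{(\kappa)}_\gamma$, which is the claimed bound.

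The Beurling case is symmetric, the roles of $\bfM^{(\lambda)}$ and $\bfM^{(\kappa)}$ being interchanged. Here the subtle point, and the only thing that needs care, is the order of quantifiers in \eqref{12L2B}: for a given $\lambda>0$ one first obtains $0<\kappa\le\lambda$ and $H>0$, and only then, for the prescribed constant $C$ (the one in the hypothesis $\|f\|_{2,\bfM^{(\kappa)},C}\le C_1$), does \eqref{12L2B} furnish the corresponding $B=B(C,\lambda)$. With this choice,
\begin{equation*}
\alpha^{\alpha/2}M^{(\kappa)}_\beta\le BC^{|\alpha|}H^{|\alpha+\beta|}M^{(\lambda)}_{\alpha+\beta},
\end{equation*}
which is again \eqref{12L1R}, now for $\bfM:=\bfM^{(\kappa)}$ and $\bfN:=\bfM^{(\lambda)}$. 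The assumption on $f$ is exactly \eqref{31G} with $\bfM=\bfM^{(\kappa)}$, so Lemma~\ref{lemma33G} gives $\|A_+^\gamma f\|_2\le C_1Be^{d/2}(9\sqrt2\,HC)^{|\gamma|}M^{(\lambda)}_\gamma$, as required.

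Since everything reduces to Lemma~\ref{lemma33G}, there is no serious analytic obstacle; the only place where one can slip is in the Beurling case, where one must respect the quantifier structure of \eqref{12L2B} and not attempt to fix $B$ before $C$.
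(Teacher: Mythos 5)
Your proposal is correct and coincides with the paper's own treatment: the paper states this result as an immediate corollary of Lemma~\ref{lemma33G}, obtained exactly by taking $\bfM=\bfM^{(\lambda)}$, $\bfN=\bfM^{(\kappa)}$ via \eqref{12L2R} in the Roumieu case and $\bfM=\bfM^{(\kappa)}$, $\bfN=\bfM^{(\lambda)}$ via \eqref{12L2B} in the Beurling case. Your remark on the quantifier order in \eqref{12L2B} (fixing $\kappa,H$ from $\lambda$ first, then $B$ from $C$) is precisely the dependence $H=H(\lambda)$, $B=B(C,\lambda)$ recorded in the statement.
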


The following lemma generalizes \cite[Lemma 3.2(b)]{L}.
\begin{Lemma}
\label{lemma35G}
Let $\bfM=(M_\alpha)_{\alpha\in\N_0^d}$ and $\bfN=(N_\alpha)_{\alpha\in\N_0^d}$
be two weight sequences satisfying \eqref{12L1R}. Then
\beqsn
\|H_\gamma\|_{2,\bfN,2HC}=
\sup_{\alpha,\beta\in\N_0^d}\frac{\|x^\alpha\partial^\beta H_\gamma\|_2}{(2HC)^{|\alpha+\beta|}N_{\alpha+\beta}}\leq
Be^{\omega_\bfM(\gamma^{1/2}/C)},
\qquad\forall\gamma\in\N_0^d,
\eeqsn
where $\gamma^{1/2}:=(\gamma_1^{1/2},\ldots,\gamma_d^{1/2})$ and
$B,C,H>0$ are the constants in \eqref{12L1R}.
\end{Lemma}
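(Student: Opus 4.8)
The plan is to estimate each quotient $\|x^\alpha\partial^\beta H_\gamma\|_2/((2HC)^{|\alpha+\beta|}N_{\alpha+\beta})$ separately and to match it, for every pair $(\alpha,\beta)$, to one admissible index in the supremum defining $\omega_\bfM$. Writing $\mu:=\alpha+\beta$ and recalling that
\[
e^{\omega_\bfM(\gamma^{1/2}/C)}=\sup_{\delta\in\N^d_{0,\gamma^{1/2}}}\frac{\gamma^{\delta/2}}{C^{|\delta|}M_\delta},
\]
the aim is to produce, for each $\mu$, an index $\tau\le\mu$ with $\tau_j=0$ whenever $\gamma_j=0$ such that the quotient is bounded by $B\,\gamma^{\tau/2}/(C^{|\tau|}M_\tau)$, which is one of the terms in this supremum.

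First I would invoke Lemma~\ref{lemma32aL}, which gives $\|x^\alpha\partial^\beta H_\gamma\|_2\le 2^{|\mu|/2}\big((\mu+\gamma)!/\gamma!\big)^{1/2}$, so the numerator depends only on $\mu$. Next I would use \eqref{12L1R} after splitting $\mu=\sigma+\tau$: applying it with the roles $\alpha=\sigma$, $\beta=\tau$ yields $\sigma^{\sigma/2}M_\tau\le BC^{|\sigma|}H^{|\mu|}N_\mu$, hence $N_\mu^{-1}\le BC^{|\sigma|}H^{|\mu|}/(\sigma^{\sigma/2}M_\tau)$, where the convention $0^0=1$ makes $\sigma^{\sigma/2}$ well defined even when some $\sigma_j=0$. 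Substituting both estimates and cancelling the powers of $2$, $H$ and $C$ (using $|\sigma|+|\tau|=|\mu|$) collapses the quotient to
\[
\frac{\|x^\alpha\partial^\beta H_\gamma\|_2}{(2HC)^{|\mu|}N_\mu}\le \frac{B}{2^{|\mu|/2}C^{|\tau|}}\cdot\frac{1}{\sigma^{\sigma/2}M_\tau}\Big(\frac{(\mu+\gamma)!}{\gamma!}\Big)^{1/2}.
\]

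The heart of the argument is a purely combinatorial inequality, and this is the step I expect to be the main obstacle, since no single fixed split works for all $\mu$ (a naive choice fails already for $d=1$, $\gamma=1$, $\mu=2$). I would argue componentwise: $(\mu_j+\gamma_j)!/\gamma_j!=\prod_{i=1}^{\mu_j}(\gamma_j+i)\le(\mu_j+\gamma_j)^{\mu_j}\le 2^{\mu_j}\max(\mu_j,\gamma_j)^{\mu_j}$, and then choose the split \emph{adaptively}: set $\sigma_j=\mu_j$, $\tau_j=0$ when $\mu_j\ge\gamma_j$, and $\sigma_j=0$, $\tau_j=\mu_j$ when $\mu_j<\gamma_j$. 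With this choice $\max(\mu_j,\gamma_j)^{\mu_j}=\sigma_j^{\sigma_j}\gamma_j^{\tau_j}$ in both cases, so multiplying over $j$ gives $(\mu+\gamma)!/\gamma!\le 2^{|\mu|}\sigma^{\sigma}\gamma^{\tau}$. Moreover $\tau_j=0$ whenever $\gamma_j=0$, so $\tau\in\N^d_{0,\gamma^{1/2}}$ is admissible in the supremum above.

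Finally I would substitute this factorial bound into the displayed estimate; the factors $2^{|\mu|/2}$ and $\sigma^{\sigma/2}$ cancel exactly, leaving
\[
\frac{\|x^\alpha\partial^\beta H_\gamma\|_2}{(2HC)^{|\mu|}N_\mu}\le B\,\frac{\gamma^{\tau/2}}{C^{|\tau|}M_\tau}\le B\,e^{\omega_\bfM(\gamma^{1/2}/C)},
\]
and taking the supremum over $\alpha,\beta\in\N_0^d$ yields the claim. The only delicate bookkeeping is the handling of the $0^0=1$ conventions (for components with $\sigma_j=0$ or $\gamma_j=0$) and the verification that the chosen $\tau$ lies in $\N^d_{0,\gamma^{1/2}}$; the genuinely non-routine ingredient is the adaptive componentwise split that makes the factorial estimate match $\sigma^{\sigma}\gamma^{\tau}$.
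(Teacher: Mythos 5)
Your proof is correct and follows essentially the same route as the paper: your adaptive split $\mu=\sigma+\tau$ (with $\tau$ supported where $\mu_j<\gamma_j$) is precisely the paper's decomposition along $J=\{j:\alpha_j+\beta_j\leq\gamma_j\}$ and $J^c$, followed by the same componentwise bound $(\mu_j+\gamma_j)\leq 2\max(\mu_j,\gamma_j)$ and the same application of \eqref{12L1R} to the pair $(\sigma,\tau)$. The only cosmetic difference is that you bound $\gamma^{\tau/2}/(C^{|\tau|}M_\tau)$ directly from the definition of $\omega_\bfM$, whereas the paper cites Lemma~\ref{lemma2}, which is exactly that inequality evaluated at $t=\gamma^{1/2}$.
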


\begin{proof}
For $\alpha,\beta,\gamma\in\N_0^d$ we set
\beqsn
&&J:=\{j\in\N:\ 1\leq j\leq d,\alpha_j+\beta_j\leq\gamma_j\}\\
&&J^c:=\{j\in\N:\ 1\leq j\leq d,\alpha_j+\beta_j>\gamma_j\}.
\eeqsn
Then for any $\delta\in\N^d$ we denote
\beqsn
\delta_J:=\sum_{j\in J}\delta_je_j,
\quad \delta_{J^c}:=\sum_{j\in J^c}\delta_je_j,
\eeqsn
so that $\delta=\delta_J+\delta_{J^c}$. By Lemma~\ref{lemma32aL} and \eqref{12L1R}, we have
\beqs
\nonumber
\|x^\alpha\partial^\beta H_\gamma\|_2\leq&&2^{\frac{|\alpha+\beta|}{2}}
\left(\frac{(\alpha+\beta+\gamma)!}{\gamma!}\right)^{1/2}
\leq2^{\frac{|\alpha+\beta|}{2}}(\alpha+\beta+\gamma)^{\frac{\alpha+\beta}{2}}\\
\nonumber
\leq %&&2^{\frac{|\alpha+\beta|}{2}}[2(\alpha_{J^c}+\beta_{J^c})]^{\frac{\alpha_{J^c}+\beta_{J^c}}{2}}[2\gamma_J]^{\frac{\alpha_J+\beta_J}{2}}\\
%\nonumber=
&&2^{|\alpha+\beta|}(\alpha_{J^c}+\beta_{J^c})^{\frac{\alpha_{J^c}+\beta_{J^c}}{2}}\gamma_J^{\frac{\alpha_J+\beta_J}{2}}\\
%\nonumber
%\leq&&2^{|\alpha+\beta|}BC^{|\alpha_{J^c}+\beta_{J^c}|}H^{|\alpha_{J^c}+\beta_{J^c}
%+\alpha_J+\beta_J|}
%\frac{N_{\alpha_{J^c}+\beta_{J^c}+\alpha_J+\beta_J}}{M_{\alpha_J+\beta_J}}
%\gamma_J^{\frac{\alpha_J+\beta_J}{2}}
%\frac{C^{|\alpha_J+\beta_J|}}{C^{|\alpha_J+\beta_J|}}\\
\label{10C}
\leq&&B(2HC)^{|\alpha+\beta|}N_{\alpha+\beta}\gamma_J^{\frac{\alpha_J+\beta_J}{2}}
\frac{1}{M_{\alpha_J+\beta_J}C^{|\alpha_J+\beta_J|}}.
\eeqs

Now, since $\alpha_J$ has the $j$-th entry equal to $\alpha_j$ for $j\in J$ and $0$ for
$j\in J^c$, 
\beqs
\label{11C}
\gamma_J^{\frac{\alpha_J+\beta_J}{2}}=\prod_{j\in J}
\gamma_j^{\frac{\alpha_j+\beta_j}{2}}=\prod_{j\in J}
\gamma_j^{\frac{\alpha_j+\beta_j}{2}}
\prod_{j\in J^c}\gamma_j^0=\gamma^{\frac{\alpha_J+\beta_J}{2}}.
\eeqs
Moreover, by Lemma~\ref{lemma2},
\beqs
\label{12C}
M_{\alpha_J+\beta_J}C^{|\alpha_J+\beta_J|}\geq\sup_{t\in\R^d}
|t^{\alpha_J+\beta_J}e^{-\omega_\bfM(t/C)}|
\geq \gamma^{\frac{\alpha_J+\beta_J}{2}}e^{-\omega_\bfM(\gamma^{1/2}/C)},
\eeqs
taking $t=\gamma^{1/2}$.

If we replace \eqref{11C} and \eqref{12C} in \eqref{10C} we finally get
\beqsn
\|x^\alpha\partial^\beta H_\gamma\|_2\leq B(2HC)^{|\alpha+\beta|}
N_{\alpha+\beta}e^{\omega_\bfM(\gamma^{1/2}/C)}.
\eeqsn
\end{proof}

\begin{Prop}
\label{cor36G}
Let $\M$ be a weight matrix  that satisfies \eqref{12L2R} and
\eqref{M2'R} (\eqref{12L2B} and \eqref{M2'B}). Then
$H_\gamma\in\Sch_{\{\M\}}$ ($H_\gamma\in\Sch_{(\M)}$)
for all $\gamma\in\N_0^d$.
\end{Prop}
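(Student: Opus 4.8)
The plan is to reduce the membership question to the $L^2$ system of seminorms \eqref{l2norms} and then read off a finite bound for each $H_\gamma$ directly from Lemma~\ref{lemma35G}. Since $\M$ satisfies \eqref{12L2R} and \eqref{M2'R} (resp. \eqref{12L2B} and \eqref{M2'B}), Proposition~\ref{lemma25G} guarantees that the sup-norm system $\|\cdot\|_{\infty,\bfM^{(\lambda)},h}$ defining $\Sch_{\{\M\}}$ (resp. $\Sch_{(\M)}$) is equivalent to the $L^2$ system $\|\cdot\|_{2,\bfM^{(\lambda)},h}$; in particular, via \eqref{dis4} (resp. \eqref{dis2}) it suffices to exhibit, for each fixed $\gamma$, one finite $L^2$-seminorm in the Roumieu case, and finiteness of all $L^2$-seminorms in the Beurling case. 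I also record that by Remark~\ref{omegafinite2} both \eqref{12L2R} and \eqref{12L2B} force $(M^{(\lambda)}_\alpha)^{1/|\alpha|}\to+\infty$ on the relevant rows, so by Remark~\ref{omegafinite} every associated weight function $\omega_{\bfM^{(\lambda)}}$ is finite at each point of $\R^d$; this is exactly what makes the right-hand side of Lemma~\ref{lemma35G} finite.

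For the Roumieu case I would fix $\gamma\in\N_0^d$ and any $\lambda>0$, and use \eqref{12L2R} to produce $\kappa\geq\lambda$ and constants $B,C,H>0$ for which the pair $\bfM=\bfM^{(\lambda)}$, $\bfN=\bfM^{(\kappa)}$ satisfies \eqref{12L1R}. Lemma~\ref{lemma35G} then yields $\|H_\gamma\|_{2,\bfM^{(\kappa)},2HC}\leq B\,e^{\omega_{\bfM^{(\lambda)}}(\gamma^{1/2}/C)}$, whose right-hand side is finite because $\omega_{\bfM^{(\lambda)}}$ is finite everywhere. Thus $\|H_\gamma\|_{2,\bfM^{(\kappa)},2HC}<\infty$, and applying \eqref{dis4} with input index $\kappa$ and parameter $2HC$ converts this into $\|H_\gamma\|_{\infty,\bfM^{(\tilde\kappa)},\tilde h}<\infty$ for suitable $\tilde\kappa,\tilde h$, so that $H_\gamma\in\Sch_{\{\bfM^{(\tilde\kappa)}\}}\subseteq\Sch_{\{\M\}}$.

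The Beurling case runs along the same lines but with the quantifiers reversed, and here I must cover every seminorm index. Fixing $\gamma$, $\lambda>0$ and an arbitrary $h>0$, condition \eqref{12L2B} provides $\kappa\leq\lambda$ and $H>0$ (depending only on $\lambda$), and then, for the particular choice $C:=h/(2H)$, a constant $B>0$ such that $\bfM=\bfM^{(\kappa)}$, $\bfN=\bfM^{(\lambda)}$ satisfy \eqref{12L1R}. Lemma~\ref{lemma35G} now gives $\|H_\gamma\|_{2,\bfM^{(\lambda)},h}=\|H_\gamma\|_{2,\bfM^{(\lambda)},2HC}\leq B\,e^{\omega_{\bfM^{(\kappa)}}(\gamma^{1/2}/C)}<\infty$, again by finiteness of $\omega_{\bfM^{(\kappa)}}$. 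Since $\lambda$ and $h$ were arbitrary, all $L^2$-seminorms of $H_\gamma$ are finite, and \eqref{dis2} upgrades this to finiteness of every sup-seminorm $\|H_\gamma\|_{\infty,\bfM^{(\lambda)},h}$, i.e. $H_\gamma\in\Sch_{(\M)}$.

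The only genuinely delicate point is the bookkeeping of the order of the quantifiers together with the seminorm parameter $2HC$ produced by Lemma~\ref{lemma35G}: in the Beurling setting one must reach an arbitrary prescribed index $h$, and this is precisely where the freedom ``$\forall C>0$'' in \eqref{12L2B} is exploited, by setting $C=h/(2H)$ after $H$ has been fixed. Everything else is a direct assembly of Lemma~\ref{lemma35G} with the equivalence in Proposition~\ref{lemma25G}; the analytic content, namely the estimate for $\|x^\alpha\partial^\beta H_\gamma\|_2$ and the identification of its growth with $e^{\omega_\bfM(\gamma^{1/2}/C)}$, has already been carried out there, so no further computation is needed.
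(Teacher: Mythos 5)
Your proof is correct and takes essentially the same route as the paper's: Lemma~\ref{lemma35G} applied to the pair of rows furnished by \eqref{12L2R} (resp.\ by \eqref{12L2B}, with the choice $C=h/(2H)$ after $H$ is fixed), followed by the seminorm equivalence of Proposition~\ref{lemma25G}; you merely make explicit the quantifier bookkeeping that the paper leaves implicit. One small caveat: in the Roumieu case Remark~\ref{omegafinite2} guarantees finiteness of $\omega_{\bfM^{(\lambda)}}$ only for $\lambda$ at or above some threshold index, so the initial $\lambda$ should be taken that large — which is harmless, since $\Sch_{\{\M\}}$ is a union over $\lambda$.
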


\begin{proof}
By Lemma~\ref{lemma35G}, if \eqref{12L2R} is satisfied, we have
\beqsn
\forall\lambda>0\,\exists\;\kappa\geq\lambda,\,\exists B,C,H>0:\
\|x^\alpha\partial^\beta H_\gamma\|_2\leq B(2HC)^{|\alpha+\beta|}M^{(\kappa)}_{|\alpha+\beta|}e^{\omega_{\bfM^{(\lambda)}}(\gamma^{1/2}/C)}.
\eeqsn
Hence $H_\gamma\in\Sch_{\{\M\}}$ by Proposition~\ref{lemma25G}. Similarly, in the Beurling case, if \eqref{12L2B} is satisfied, we obtain
\beqsn
&&\forall\lambda>0\,\exists\;0<\kappa\leq\lambda,\,\exists H>0:\, \forall C>0\,\exists B>0:\\
&&\|x^\alpha\partial^\beta H_\gamma\|_2\leq B(2HC)^{|\alpha+\beta|}M^{(\lambda)}_{|\alpha+\beta|}e^{\omega_{\bfM^{(\kappa)}}(\gamma^{1/2}/C)}.
\eeqsn
So $H_\gamma\in\Sch_{(\M)}$ by Proposition~\ref{lemma25G}.
\end{proof}
The next result gives information about the non-triviality of the classes $\Sch_{\{\M\}}$ and $\Sch_{(\M)}$. Indeed, we characterize  when the Hermite functions $H_{\gamma}$ are contained in such classes.

\begin{Prop}
\label{lemma37G}
Let $\M$ be a weight matrix  that satisfies
\eqref{M2'R}, \eqref{37LR}; then the following are equivalent:
\begin{itemize}
\item[$(a)$]
$\exists\lambda>0\,\exists C,C_1>0:\quad
\alpha^{\alpha/2}\leq C_1 C^{|\alpha|}M^{(\lambda)}_\alpha,\quad\forall\alpha\in\N_0^d$;
\item[$(b)$]
$\M$ satisfies \eqref{12L2R};
\item[$(c)$]
$H_\gamma\in\Sch_{\{\M\}}$ for all $\gamma\in\N_0^d$.
\end{itemize}

If $\M$ satisfies \eqref{M2'B}, \eqref{37LB}, then the following are equivalent:
\begin{itemize}
\item[$(a)'$]
$\forall\lambda,C>0\,\exists C_1>0:\quad
\alpha^{\alpha/2}\leq C_1 C^{|\alpha|}M^{(\lambda)}_\alpha,\quad\forall\alpha\in\N_0^d$;
\item[$(b)'$]
$\M$ satisfies \eqref{12L2B};
\item[$(c)'$]
$H_\gamma\in\Sch_{(\M)}$ for all $\gamma\in\N_0^d$.
\end{itemize}
\end{Prop}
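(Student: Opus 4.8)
The plan is to establish, in the Roumieu setting, the cycle of implications $(a)\Rightarrow(b)\Rightarrow(c)\Rightarrow(a)$, and to treat the Beurling equivalences $(a')\Rightarrow(b')\Rightarrow(c')\Rightarrow(a')$ by exactly the same scheme, with the obvious reversal of the order between the indices $\lambda,\kappa$ and the corresponding rearrangement of the quantifiers. The whole argument rests on combining the product-type condition \eqref{37LR} (respectively \eqref{37LB}) with the monotonicity of the matrix built into \eqref{defcalM}, together with one explicit computation for the Gaussian $H_0$.

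For $(a)\Rightarrow(b)$ I would fix $\lambda>0$ and take the constants $\lambda_0,C,C_1$ from $(a)$. Setting $\mu:=\max\{\lambda,\lambda_0\}$, the monotonicity $\bfM^{(\lambda)}\leq\bfM^{(\mu)}$ for $\lambda\le\mu$ gives $\alpha^{\alpha/2}M^{(\lambda)}_\beta\leq C_1C^{|\alpha|}M^{(\mu)}_\alpha M^{(\mu)}_\beta$ for all $\alpha,\beta$. Applying \eqref{37LR} to $\mu$ absorbs the product $M^{(\mu)}_\alpha M^{(\mu)}_\beta$ into a single $M^{(\kappa)}_{\alpha+\beta}$ with $\kappa\geq\mu\geq\lambda$ and some $A\geq1$, which is precisely \eqref{12L2R} with $B=C_1$, the same $C$, and $H=A$. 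The implication $(b)\Rightarrow(c)$ is then immediate from Proposition~\ref{cor36G}, since $(b)$ is exactly \eqref{12L2R} and \eqref{M2'R} holds by hypothesis.

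The only direction requiring a genuine computation is $(c)\Rightarrow(a)$, and here it suffices to use the single Hermite function $H_0(x)=\pi^{-d/4}e^{-|x|^2/2}$. The key point is the exact evaluation of its weighted sup-norm: for every $\alpha\in\N_0^d$,
\[
\|x^\alpha H_0\|_\infty=\pi^{-d/4}\prod_{j=1}^d\sup_{s\in\R}|s^{\alpha_j}e^{-s^2/2}|=\pi^{-d/4}e^{-|\alpha|/2}\alpha^{\alpha/2},
\]
where the one-variable maximum is attained at $s=\sqrt{\alpha_j}$ and equals $1$ when $\alpha_j=0$, consistently with the convention $0^0=1$. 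Since $(c)$ gives in particular $H_0\in\Sch_{\{\M\}}$, there exist $C_0,h_0,\lambda_0>0$ with $\|x^\alpha H_0\|_\infty\leq C_0h_0^{|\alpha|}M^{(\lambda_0)}_\alpha$ (take $\beta=0$ in the defining seminorm). Solving for $\alpha^{\alpha/2}$ and absorbing the exponential factor into the geometric constant yields $\alpha^{\alpha/2}\leq \pi^{d/4}C_0(e^{1/2}h_0)^{|\alpha|}M^{(\lambda_0)}_\alpha$, which is $(a)$ with $\lambda=\lambda_0$. It is worth emphasizing that only $H_0$ is needed here, so the equivalence shows that containing every Hermite function is no stronger than containing the Gaussian alone.

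For the Beurling equivalences the same three steps work with minor bookkeeping. In $(a')\Rightarrow(b')$ one fixes $\lambda$, takes the index $\kappa\leq\lambda$ and constant $A$ from \eqref{37LB}, sets $H=A$, and then for each prescribed $C$ invokes $(a')$ with parameters $\kappa$ and $C$ to produce $B$, matching the quantifier order of \eqref{12L2B}. The step $(b')\Rightarrow(c')$ is again Proposition~\ref{cor36G} together with \eqref{M2'B}. Finally $(c')\Rightarrow(a')$ uses the same Gaussian evaluation, now exploiting that $H_0\in\Sch_{(\M)}$ furnishes a bound $\|x^\alpha H_0\|_\infty\leq C_{\lambda,h}h^{|\alpha|}M^{(\lambda)}_\alpha$ valid for \emph{every} $\lambda,h>0$, so that for arbitrary prescribed $\lambda,C$ one simply chooses $h=Ce^{-1/2}$ to obtain $(a')$. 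The main — indeed the only — point of substance is the explicit computation of $\|x^\alpha H_0\|_\infty$ displayed above; everything else is a matter of combining \eqref{37LR}/\eqref{37LB} with monotonicity and citing Proposition~\ref{cor36G}.
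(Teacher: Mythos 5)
Your proposal is correct and follows essentially the same route as the paper's own proof: the cycle $(a)\Rightarrow(b)\Rightarrow(c)\Rightarrow(a)$ with $(a)\Rightarrow(b)$ obtained from monotonicity of the matrix plus \eqref{37LR} (resp.\ \eqref{37LB}), $(b)\Rightarrow(c)$ cited from Proposition~\ref{cor36G}, and $(c)\Rightarrow(a)$ extracted from the Gaussian $H_0$ alone via the value $\alpha^{\alpha/2}e^{-|\alpha|/2}\pi^{-d/4}$ at $x=\alpha^{1/2}$. The only cosmetic difference is that you compute $\|x^\alpha H_0\|_\infty$ exactly, whereas the paper only uses the evaluation at $x=\alpha^{1/2}$ as a lower bound, which suffices.
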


\begin{proof}
The implications $(b)\Rightarrow(c)$ and $(b)'\Rightarrow(c)'$ follow from
Proposition~\ref{cor36G}. To see $(a)\Rightarrow(b)$, fix an arbitrary $\mu>0$ and $\lambda$ as in $(a)$. We have
\beqsn
\alpha^{\alpha/2}M_\beta^{(\mu)}\leq C_1 C^{\vert\alpha\vert} M_\alpha^{(\lambda)} M_\beta^{(\mu)}.
\eeqsn
So, for $\nu=\max\{\lambda,\mu\}$, by \eqref{defcalM} and \eqref{37LR}, there exists $\kappa\geq\nu$ and $A\geq 1$ such that
\beqsn
\alpha^{\alpha/2}M^{(\mu)}_\beta\leq C_1C^{|\alpha|}M^{(\nu)}_\alpha
M^{(\nu)}_\beta\leq C_1 C^{|\alpha|}A^{|\alpha+\beta|}M^{(\kappa)}_{\alpha+\beta},
\quad\alpha,\beta\in\N_0^d.
\eeqsn
Now, we prove $(a)'\Rightarrow(b)'$.
For any given $\lambda>0$, let $0<\kappa\leq\lambda$ and $A\geq1$ such that
\eqref{37LB} holds. By $(a)'$ applied to this $\kappa$, there is, for any $C>0$, some 
$C_1>0$ depending on $\kappa$ and $C$ such that
\beqsn
\alpha^{\alpha/2}M^{(\kappa)}_\beta\leq C_1C^{|\alpha|}M^{(\kappa)}_\alpha
M^{(\kappa)}_\beta\leq C_1 C^{|\alpha|}A^{|\alpha+\beta|}M^{(\lambda)}_{\alpha+\beta},
\quad\alpha,\beta\in\N_0^d.
\eeqsn
If $(c)$ holds, in particular, $H_0\in\Sch_{\{\M\}}$. Hence there exist some $C,h>0$ and $\lambda>0$ such that $\|x^{\alpha}H_0\|_{\infty}\le Ch^{|\alpha|}M^{(\lambda)}_{\alpha}$ for all $\alpha\in\N_0^d$. Taking  %$$H_0(x)=\frac{1}{\pi^{d/4}}\exp\left(-\sum_{j=1}^d\frac{x_j^2}{2}\right),$$ for all $x=(x_1,\dots,x_d)\in[0,+\infty)^d$ we have 
% $|x^{\alpha}H_0(x)|=\frac{1}{\pi^{d/4}}x_1^{\alpha_1}e^{-x_1^2/2}\cdots x_d^{\alpha_d}e^{-x_d^2/2}$ and so the choice 
$x=\alpha^{1/2}$, $\alpha\in\N_0^d$ arbitrary, yields $$|\alpha^{\alpha/2}H_0(\alpha^{1/2})|=\frac{1}{\pi^{d/4}}\alpha_1^{\alpha_1/2}e^{-\alpha_1/2}\cdots\alpha_d^{\alpha_d/2}e^{-\alpha_d/2}=\frac{1}{\pi^{d/4}}\alpha^{\alpha/2}e^{-|\alpha|/2}.$$
Hence $\alpha^{\alpha/2}\pi^{-d/4}e^{-|\alpha|/2}\le\|x^{\alpha}H_0\|_{\infty}\le Ch^{|\alpha|}M^{(\lambda)}_{\alpha}$ for all $\alpha\in\N_0^d$, which shows $(a)$. 

The Beurling case  $(c)'\Rightarrow(a)'$ is analogous since now, for any given $\lambda$ and $h>0,$ there exists $C_{\lambda,h}>0$ such that $\|x^{\alpha}H_0\|_{\infty}\le C_{\lambda,h}h^{|\alpha|}M^{(\lambda)}_{\alpha}$ for all $\alpha\in\N_0^d$.\end{proof}

\section{Matrix sequence spaces}
\label{sec4}

Let us consider, for $\bfM=(M_\alpha)_{\alpha\in\N_0^d}$, the following
sequence spaces in the Roumieu and the Beurling cases:
\beqsn
&&\Lambda_{\{\bfM\}}:=\{\bfc=(c_\alpha)\in \C^{\N_0^d}:\ \exists\, h>0,\ \
\|\bfc\|_{\bfM,h}:=\sup_{\alpha\in\N_0^d}|c_\alpha|e^{\omega_\bfM(\alpha^{1/2}/h)}
<+\infty\},\\
&&\Lambda_{(\bfM)}:=\{\bfc=(c_\alpha)\in \C^{\N_0^d}:\ \forall\, h>0,\ \
\|\bfc\|_{\bfM,h}<+\infty\}.
\eeqsn
%\begin{Rem}
%\label{remLambda1}
%\begin{em}
Since $h\mapsto\omega_\bfM(\alpha^{1/2}/h)$ is decreasing we can also write
\beqsn
&&\Lambda_{\{\bfM\}}=\{\bfc=(c_\alpha)\in \C^{\N_0^d}:\ \exists\,j\in\N,\ \ 
\|\bfc\|_{\bfM,j}<+\infty\},\\
&&\Lambda_{(\bfM)}=\{\bfc=(c_\alpha)\in \C^{\N_0^d}:\ \forall\,j\in\N,\
\|\bfc\|_{\bfM,1/j}<+\infty\}.
\eeqsn
Indeed, it sufficies to take $j=[h]+1$ in the Roumieu case and $j=\left[\frac1h\right]+1$
in the Beurling case.
%\end{em}
%\end{Rem}

Now, for a weight matrix $\M$ as in \eqref{defcalM} we denote
\beqsn
&&\Lambda_{\{\M\}}:=
\bigcup_{\lambda>0}\Lambda_{\{\bfM^{(\lambda)}\}}
=\{\bfc=(c_\alpha)\in \C^{\N_0^d}:\ \exists\,\lambda,h>0,\ \
\|\bfc\|_{\bfM^{(\lambda)},h}<+\infty\},\\
&&\Lambda_{(\M)}:=
\bigcap_{\lambda>0}\Lambda_{(\bfM^{(\lambda)})}
=\{\bfc=(c_\alpha)\in \C^{\N_0^d}:\ \forall\, \lambda,h>0,\ \
\|\bfc\|_{\bfM^{(\lambda)},h}<+\infty\}.
\eeqsn
%\begin{Rem}
%\label{remLambda2}
%\begin{em}
Since $\bfM^{(\lambda)}\leq\bfM^{(\kappa)}$ for $0<\lambda\leq\kappa$ by assumption,
then $\omega_{\bfM^{(\lambda)}}\geq\omega_{\bfM^{(\kappa)}}$.
Moreover $h\mapsto e^{\omega_{\bfM^{(\kappa)}}(\alpha^{1/2}/h)}$ is
decreasing for all $\kappa>0$, $\alpha\in\N_0^d$.
It follows that we can write $\Lambda_{\{\M\}}$ ($\Lambda_{(\M)}$) as
inductive (projective) limit:
\beqs
\label{matrix-R}
&&\Lambda_{\{\M\}}
=\{\bfc=(c_\alpha)\in \C^{\N_0^d}:\ \exists j\in\N,\ \
\|\bfc\|_{\bfM^{(j)},j}<+\infty\},\\
\label{matrix}
&&\Lambda_{(\M)}=\{\bfc=(c_\alpha)\in \C^{\N_0^d}:\ \forall j\in\N,\ \ 
\|\bfc\|_{\bfM^{(1/j)},1/j}<+\infty\}.
\eeqs

%\noindent\fbox{\begin{minipage}{170mm}
%Indeed, if there exists $\lambda,h>0$ such that
%$\|\bfc\|_{\bfM^{(\lambda)},h}<+\infty$, then we can choose $j\in\N$ with $j\geq\lambda$,
%$j\geq h$ and obtain
%\beqsn
%\sup_{\alpha\in\N_0^d}|c_\alpha|e^{\omega_{\bfM^{(j)}}(\alpha^{1/2}/j)}\leq
%\sup_{\alpha\in\N_0^d}|c_\alpha|e^{\omega_{\bfM^{(\lambda)}}(\alpha^{1/2}/h)}<+\infty.
%\eeqsn
%The opposite implication in the Roumieu case is trivial.
%
%Analogously, in the Beurling case, if $\|\bfc\|_{\bfM^{(1/j)},1/j}<+\infty$ for all $j\in\N$,
%then for all $\lambda,h>0$, choosing $j\in\N$ with $j\geq1/\lambda$, $j\geq1/h$, we have that
%\beqsn
%\sup_{\alpha\in\N_0^d}|c_\alpha|e^{\omega_{\bfM^{(\lambda)}}(\alpha^{1/2}/h)}\leq
%\sup_{\alpha\in\N_0^d}|c_\alpha|e^{\omega_{\bfM^{(1/j)}}(j\alpha^{1/2})}<+\infty
%\eeqsn
%and the opposite implication is trivial.\end{minipage}}
%
%\green{Maybe the above boxed proof can be eliminated in the final version.}
%\end{em}
%\end{Rem}

We observe that by Remark \ref{omegafinite} it seems natural to require that  $\lim_{|\alpha|\rightarrow\infty}(M_{\alpha})^{1/|\alpha|}=+\infty$ for the definition of $\Lambda_{\{\bfM\}}$ and $\Lambda_{(\bfM)}$. In fact, otherwise  $\omega_\bfM(t)=+\infty$ for all large $t\in\R^d$ and we   get $\Lambda_{(\bfM)}=\{0\}$ and $\Lambda_{\{\bfM\}}$ consisting of sequences having only finitely many values $\neq 0$.

However, in our next main result, by Remark \ref{omegafinite2} and assumption \eqref{12L2R} (\eqref{12L2B} respectively), we have the warranty of the finiteness of all associated weight functions under consideration.

\begin{Th}
\label{th41G}
Let $\M$ be a weight matrix  satisfying \eqref{12L2R} and
\eqref{M2'R}. Then the Hermite functions are an absolute
Schauder basis in $\Sch_{\{\M\}}$ and
\beqsn
T:\ \Sch_{\{\M\}}&&\longrightarrow \Lambda_{\{\M\}}\\
f&&\longmapsto (\xi_\gamma(f))_{\gamma\in\N_0^d}:=
\left(\int_{\R^d}f(x)H_\gamma(x)dx\right)_{\gamma\in\N_0^d}
\eeqsn
defines an isomorphism.

If $\M$ satisfies \eqref{12L2B} and \eqref{M2'B}, then the
Hermite functions are an absolute Schauder basis in $\Sch_{(\M)}$ and the above
defined operator $T:\ \Sch_{(\M)}\rightarrow \Lambda_{(\M)}$ is an isomorphism.
\end{Th}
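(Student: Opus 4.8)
The plan is to prove both statements at once, treating the Roumieu case in detail and indicating the (dual) quantifier changes for the Beurling case. Throughout I would work with the $L^2$-seminorms $\|\cdot\|_{2,\bfM^{(\lambda)},h}$ rather than the sup-seminorms: this is legitimate precisely because Proposition~\ref{lemma25G} shows, under the standing hypotheses \eqref{12L2R}, \eqref{M2'R} (resp. \eqref{12L2B}, \eqref{M2'B}), that the two systems generate the same topology. The proof then splits into three parts: (1)~$T$ is well defined (the integrals converge since $\Sch_{\{\M\}}\subset L^2$) and continuous into the sequence space; (2)~the map $S\colon\bfc\mapsto\sum_\gamma c_\gamma H_\gamma$ is well defined and continuous into $\Sch_{\{\M\}}$; (3)~$S$ and $T$ are mutually inverse, and the expansion $f=\sum_\gamma\xi_\gamma(f)H_\gamma$ converges absolutely, so that $(H_\gamma)$ is an absolute Schauder basis with coefficient functionals $\xi_\gamma$.

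\emph{Continuity of $T$.} The algebraic heart is that $A_{+,i}$ and $A_{-,i}$ are formal $L^2$-adjoints; combined with the ladder identity \eqref{15C} rewritten as $\gamma^{\alpha/2}H_\gamma=2^{-|\alpha|/2}A_-^\alpha H_{\gamma+\alpha}$, this yields for every $\alpha,\gamma\in\N_0^d$
\[
\gamma^{\alpha/2}\xi_\gamma(f)=2^{-|\alpha|/2}\langle f,A_-^\alpha H_{\gamma+\alpha}\rangle=2^{-|\alpha|/2}\langle A_+^\alpha f,H_{\gamma+\alpha}\rangle.
\]
Since $\|H_{\gamma+\alpha}\|_2=1$, Cauchy--Schwarz gives $\gamma^{\alpha/2}|\xi_\gamma(f)|\le2^{-|\alpha|/2}\|A_+^\alpha f\|_2$, and I would insert the bound of Lemma~\ref{cor34G}, namely $\|A_+^\alpha f\|_2\le C_1Be^{d/2}(9\sqrt2\,HC)^{|\alpha|}M^{(\kappa)}_\alpha$ whenever $\|f\|_{2,\bfM^{(\lambda)},C}\le C_1$. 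The factor $2^{-|\alpha|/2}$ absorbs the $\sqrt2$, leaving $\gamma^{\alpha/2}|\xi_\gamma(f)|\le C_1Be^{d/2}(9HC)^{|\alpha|}M^{(\kappa)}_\alpha$; dividing by $(9HC)^{|\alpha|}M^{(\kappa)}_\alpha$ and taking the supremum over $\alpha$ turns the left side into $|\xi_\gamma(f)|\,e^{\omega_{\bfM^{(\kappa)}}(\gamma^{1/2}/(9HC))}$, whence $\|Tf\|_{\bfM^{(\kappa)},9HC}\le Be^{d/2}\|f\|_{2,\bfM^{(\lambda)},C}$. This is exactly continuity of $T$ into $\Lambda_{\{\M\}}$.

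\emph{Continuity of $S$.} Given $\bfc$ with $\|\bfc\|_{\bfM^{(\lambda_0)},g_0}=:K<\infty$, I bound a supremum of a sum by the sum of suprema and apply Lemma~\ref{lemma35G} (valid since \eqref{12L2R} provides a pair satisfying \eqref{12L1R}), obtaining
\[
\|Sf\|_{2,\bfM^{(\kappa_2)},2HC}\le\sum_\gamma|c_\gamma|\,\|H_\gamma\|_{2,\bfM^{(\kappa_2)},2HC}\le B\sum_\gamma|c_\gamma|\,e^{\omega_{\bfM^{(\kappa_1)}}(\gamma^{1/2}/C)}.
\]
The remaining task is summability. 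Here I would use Lemma~\ref{lemma58G}: choosing $N>2d$, condition \eqref{M2'R} produces indices with $\omega_{\bfM^{(\kappa_1)}}(t)+N\log|t|\le\omega_{\bfM^{(\lambda_0)}}(At)+B$, which converts the growing factor $e^{\omega_{\bfM^{(\kappa_1)}}(\gamma^{1/2}/C)}$ into $e^{\omega_{\bfM^{(\lambda_0)}}(\gamma^{1/2}/(C/A))}\,|\gamma^{1/2}|^{-N}$ up to constants; since $C$ in \eqref{12L1R} may be enlarged at will, I can arrange $C/A\ge g_0$ so that this exponential is dominated by $e^{\omega_{\bfM^{(\lambda_0)}}(\gamma^{1/2}/g_0)}$, which the factor $|c_\gamma|\le K\,e^{-\omega_{\bfM^{(\lambda_0)}}(\gamma^{1/2}/g_0)}$ cancels. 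What is left is $K$ times $\sum_{\gamma\ne0}|\gamma^{1/2}|^{-N}=\sum_{\gamma\ne0}|\gamma|^{-N/2}$, which converges because $N>2d$ and the number of $\gamma$ with $|\gamma|=n$ grows like $n^{d-1}$. The same estimate on tails shows the partial sums are Cauchy, so $Sf$ converges in $\Sch_{\{\M\}}$ and $S$ is continuous. I expect this index-and-scale bookkeeping — gaining the logarithmic margin from \eqref{M2'R} while staying inside the inductive limit and matching scales so that the $\bfc$-weight beats the $H_\gamma$-weight — to be the main obstacle.

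Finally, $T\circ S=\id$ follows from the $L^2$-orthonormality of the Hermite system (so $\xi_\gamma(\sum_\delta c_\delta H_\delta)=c_\gamma$, the interchange of $\xi_\gamma$ with the sum being licensed by the convergence just established), while $S\circ T=\id$ and the injectivity of $T$ follow from the completeness of $(H_\gamma)$ in $L^2(\R^d)\supset\Sch_{\{\M\}}$: a function with all Hermite coefficients zero vanishes. Hence $T$ is a topological isomorphism and $f=\sum_\gamma\xi_\gamma(f)H_\gamma$ in $\Sch_{\{\M\}}$; the absolute convergence of this expansion in each seminorm is precisely the estimate of part~(2) applied to $\bfc=Tf$, so $(H_\gamma)$ is an absolute Schauder basis. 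For the Beurling statement one repeats the argument verbatim, replacing Lemmas~\ref{cor34G}, \ref{lemma35G}, \ref{lemma58G} and conditions \eqref{12L2R}, \eqref{M2'R} by their Beurling counterparts \eqref{12L2B}, \eqref{M2'B}; the projective limit now supplies all seminorms of $\bfc$, which is exactly what the reversed quantifiers require.
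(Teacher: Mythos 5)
Your proposal is correct and takes essentially the same approach as the paper: reduction to the $L^2$ seminorms via Proposition~\ref{lemma25G}, continuity of $T$ via the ladder identity \eqref{15C}, the adjointness of $A_{\pm}$ and Lemma~\ref{cor34G}, and continuity of the inverse via Lemma~\ref{lemma35G} combined with polynomial decay extracted from \eqref{M2'R} (resp.\ \eqref{M2'B}). The only visible deviation is cosmetic: you obtain that decay from Lemma~\ref{lemma58G} (with arbitrary $N>2d$) where the paper uses Lemma~\ref{lemma21G} (with the fixed power $2(d+1)$) --- both encode the same iteration of the derivation-closedness condition --- and your scale-matching by enlarging $C$ in \eqref{12L2R} so that $C/A\ge g_0$ mirrors the paper's choice $C\ge B_2C^*$, while your explicit $L^2$-completeness argument for bijectivity just spells out what the paper leaves implicit.
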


\begin{proof}
%Remark that the Hermite functions $H_\gamma\in\Sch_{\{\M\}}/\Sch_{(\M)}$ by
%Proposition~\ref{cor36G}. Since they are a real valued basis of the Hilbert space $L^2(\R^d)$,
%by Proposition~\ref{lemma25G} we just have to prove the required isomorphisms using
%$L^2$ norms on $\Sch_{\{\M\}}/\Sch_{(\M)}$.
%
%\green{We forgot to write something about the Schauder basis; I wrote down something here above
%"copying" from Langenbruch. But David was not convinced of the explanation of Langenbruch, so please check it.}
By Proposition~\ref{lemma25G} we can assume that
$\Sch_{\{\M\}}$ and $\Sch_{(\M)}$ are  defined by $L^2$ norms.
First, we consider the Roumieu case. 
%By Proposition~\ref{lemma25G} we can assume
%$\Sch_{\{\M\}}$ to be defined by $L^2$ norms. 
If $f\in\Sch_{\{\M\}}$,  there exist 
$\lambda,C,C_1>0$ such that
\beqsn
\|f\|_{2,\bfM^{(\lambda)},C}=:C_1<+\infty.
\eeqsn
By \eqref{15C} and Lemma~\ref{cor34G}, there exists $\kappa\geq\lambda$,
$B,C,H>0$ such that for all $\gamma,\alpha\in\N_0^d$, since $\|H_\gamma\|_2=1$ for all $\gamma\in\N_0^d$, we have 
\beqsn
|\xi_\gamma(f)|^2\gamma^\alpha&&=|\langle f,H_\gamma\rangle|^2\gamma^\alpha
\leq |\langle f,\sqrt{2^{|\alpha|}\gamma^\alpha}H_\gamma\rangle|^2
=|\langle f,A_-^\alpha(H_{\gamma+\alpha})\rangle|^2\\
=&&|\langle A_+^\alpha(f),H_{\gamma+\alpha}\rangle|^2\leq
\|A_+^\alpha(f)\|_2^2\|H_{\gamma+\alpha}\|_2^2
\leq C_1^2B^2e^d(9\sqrt{2}HC)^{2|\alpha|}(M^{(\kappa)}_\alpha)^2.
\eeqsn
Therefore, by definition of the associated weight function, and using the notation of \eqref{4}, since $|(\gamma^{1/2})^\alpha|=|\gamma_1^{\alpha_1/2}\cdots\gamma_d^{\alpha_d/2}|
=(\gamma^\alpha)^{1/2}$, we obtain
\beqsn
|\xi_\gamma(f)|e^{\omega_{\bfM^{(\kappa)}}(\gamma^{1/2}/(9\sqrt{2}HC))}
=&&\sup_{\alpha\in\N_{0,\gamma}^d} \frac{|\xi_\gamma(f)|\left|\left(\frac{\gamma^{1/2}}{9\sqrt{2}HC}\right)^\alpha\right|}{M^{(\kappa)}_\alpha}
%=\sup_{\alpha\in\N_{0,\gamma}^d}\frac{|\xi_\gamma(f)|(\gamma^\alpha)^{1/2}}{(9\sqrt{2}HC)^{|\alpha|}M^{(\kappa)}_\alpha}\\
\leq%&&\sup_{\alpha\in\N_{0,\gamma}^d}\frac{C_1B e^{d/2}(9\sqrt{2}HC)^{|\alpha|}
%M^{(\kappa)}_\alpha}{(9\sqrt{2}HC)^{|\alpha|}M^{(\kappa)}_\alpha}=
C_1B e^{d/2}.
\eeqsn
Hence $(\xi_\gamma(f))_\gamma\in\Lambda_{\{\M\}}$ and, more precisely, there
exist $\kappa\geq\lambda$, $H,C>0$ and $B\geq1$ such that
\beqs
\label{C40}
\|(\xi_\gamma(f))_\gamma\|_{\bfM^{(\kappa)},9\sqrt{2}HC}\leq B e^{d/2}
\|f\|_{2,\bfM^{(\lambda)},C}\,.
\eeqs
This proves that $T$ is continuous in the Roumieu case \cite[Proposition 24.7]{MV}. 

On the other hand, given $\bfc=(c_\gamma)_{\gamma\in\N_0^d}\in\Lambda_{\{\M\}}$, let
$\lambda,C^*>0$ such that
\beqsn
\sup_{\gamma\in\N_0^d}|c_\gamma|e^{\omega_{\bfM^{(\lambda)}}(\gamma^{1/2}/C^*)}
=\|\bfc\|_{\bfM^{(\lambda)},C^*}=:C_1^*<+\infty.
\eeqsn
By Lemma~\ref{lemma21G}, there exist $\kappa\geq\lambda$ and $B_1,B_2\geq1$ such
that
\beqsn
e^{-\omega_{\bfM^{(\lambda)}}(B_2t)+\omega_{\bfM^{(\kappa)}}(t)}
\leq B_1(1+|t|)^{-2(d+1)},\quad t\in\R^d.
\eeqsn
Then, by \eqref{12L2R}, there exist $\kappa'\geq\kappa$ and $B,C,H>0$
with $C\geq B_2C^*$, such that, by Lemma~\ref{lemma35G},
\beqs
\nonumber
|c_\gamma|\cdot\|x^\alpha\partial^\beta H_\gamma\|_2\leq&&|c_\gamma|(2HC)^{|\alpha+\beta|}
M^{(\kappa')}_{\alpha+\beta}Be^{\omega_{\bfM^{(\kappa)}}(\gamma^{1/2}/C)}\\
\nonumber
\leq&&C_1^*B(2HC)^{|\alpha+\beta|}M^{(\kappa')}_{\alpha+\beta}
e^{-\omega_{\bfM^{(\lambda)}}(\gamma^{1/2}/C^*)
+\omega_{\bfM^{(\kappa)}}(\gamma^{1/2}/(B_2C^*))}\\
\label{16C}
\leq&&C_1^*BB_1(2HC)^{|\alpha+\beta|}
M^{(\kappa')}_{\alpha+\beta}
\left(1+\left|\frac{\gamma^{1/2}}{B_2C^*}\right|\right)^{-2(d+1)}.
\eeqs
Since here $|\gamma^{1/2}|$ denotes the Euclidean norm of the multi-index $\gamma^{1/2}$, we have
\beqs
\label{dis3}
|\gamma^{1/2}|^{2(d+1)}=(\gamma_1+\ldots+\gamma_d)^{d+1}
\ge |\gamma|^{d+1}.
\eeqs
Hence
\beqsn
\sum_{\gamma\in\N_0^d}|c_\gamma|\cdot\|x^\alpha\partial^\beta H_\gamma\|_2\leq&&
C_1^*BB_1(2HC)^{|\alpha+\beta|}M^{(\kappa')}_{\alpha+\beta}
\sum_{\gamma\in\N_0^d}\frac{1}{\left(1+\left|\frac{\gamma^{1/2}}{B_2C^*}\right|\right)^{2(d+1)}}\\
\leq&&C_1^*BB_1(2HC)^{|\alpha+\beta|}M^{(\kappa')}_{\alpha+\beta}
\sum_{\gamma\in\N_0^d}\frac{1}{1+\left|\frac{\gamma^{1/2}}{B_2C^*}\right|^{2(d+1)}}\\
=&&C_1^*BB_1(2HC)^{|\alpha+\beta|}M^{(\kappa')}_{\alpha+\beta}
\sum_{\gamma\in\N_0^d}\frac{(B_2C^*)^{2(d+1)}}{(B_2C^*)^{2(d+1)}+|\gamma|^{d+1}}.
\eeqsn
Hence
\beqs
\label{C41}
\left\|\sum\nolimits_{\gamma\in\N_0^d}c_\gamma H_\gamma\right\|_{2,\bfM^{(\kappa')},2HC}
\leq C_1^*BB_1\tilde{C}
=BB_1\tilde{C}\|\bfc\|_{\bfM^{(\lambda)},C^*},
\eeqs
for $\tilde{C}=\sum_{\gamma\in\N_0^d}
(B_2C^*)^{2(d+1)}/((B_2C^*)^{2(d+1)}+|\gamma|^{d+1})<+\infty$. This shows that $T^{-1}$ is continuous and, moreover, that $(H_\gamma)_\gamma$ is an absolute Schauder basis in $\Sch_{\{\M\}}$.

Let now $f\in\Sch_{(\M)}$ and $\lambda,C>0$ be given. We  consider $0<\kappa\leq\lambda$,
$H,B>0$ as in \eqref{12L2B} (with $\kappa$ and $H$ depending only on $\lambda$) and we set
\beqsn
C_1:=\|f\|_{2,\bfM^{(\kappa)},C}<+\infty.
\eeqsn
By Lemma~\ref{lemma33G}, we have 
\beqsn
\|A_+^\alpha f\|_2\leq C_1B e^{d/2}(9\sqrt{2}HC)^{|\alpha|}M^{(\lambda)}_\alpha,
\quad\alpha\in\N_0^d.
\eeqsn
Hence, proceeding as in the Roumieu case, we deduce that, for all $\lambda,C>0$, there
exist $0<\kappa\leq\lambda$ and $B,H>0$ such that
\beqs
\label{C42}
\|(\xi_\gamma(f))_\gamma\|_{\bfM^{(\lambda)},9\sqrt{2}HC}\leq B e^{d/2}
\|f\|_{2,\bfM^{(\kappa)},C}.
\eeqs
%Since $H$ is only depending on the given $\lambda$, and not on $C$, we have  $9\sqrt{2}HC\rightarrow 0$ as $C\rightarrow 0$.
This shows that $T$ is continuous in the Beurling case.

Now, if $\bfc=(c_\gamma)_{\gamma\in\N_0^d}\in\Lambda_{(\M)}$, then
by \eqref{12L2B} and Lemma~\ref{lemma35G}, for all $\lambda,C>0$ there exist
$0<\kappa\leq\lambda$, and $H,B>0$ (with $\kappa$ and $H$ depending only on $\lambda$) such that
\beqsn
\|x^\alpha\partial^\beta H_\gamma\|_2\leq(2HC)^{|\alpha+\beta|}
M^{(\lambda)}_{\alpha+\beta}Be^{\omega_{\bfM^{(\kappa)}}(\gamma^{1/2}/C)}.
\eeqsn
By Lemma~\ref{lemma21G}, there exist  $0<\kappa'\leq\kappa$ and $B_1,B_2\geq1$
such that
\beqsn
e^{-\omega_{\bfM^{(\kappa')}}(B_2t)+\omega_{\bfM^{(\kappa)}}(t)}
\leq B_1(1+|t|)^{-2(d+1)}, \ \ t\in \R^d.
\eeqsn
Since $\bfc\in\Lambda_{(\M)}$, we have 
\beqsn
\sup_{\gamma\in\N_0^d}|c_\gamma|e^{\omega_{\bfM^{(\kappa')}}(B_2\gamma^{1/2}/C)}
=\|\bfc\|_{\bfM^{(\kappa')},C/B_2}=:C_1<+\infty.
\eeqsn
Therefore, arguing as in the Roumieu case,
\beqsn
\sum_{\gamma\in\N_0^d}|c_\gamma|\cdot\|x^\alpha\partial^\beta H_\gamma\|_2\leq&&
C_1B(2HC)^{|\alpha+\beta|}M^{(\lambda)}_{\alpha+\beta}
\sum_{\gamma\in\N_0^d}
e^{-\omega_{\bfM^{(\kappa')}}(B_2\gamma^{1/2}/C)+\omega_{\bfM^{(\kappa)}}(\gamma^{1/2}/C)}\\
\leq&&C_1BB_1(2HC)^{|\alpha+\beta|}M^{(\lambda)}_{\alpha+\beta}
\sum_{\gamma\in\N_0^d}\frac{1}{(1+|\gamma^{1/2}/C|)^{2(d+1)}}\\
\leq&&\tilde{B}C_1(2HC)^{|\alpha+\beta|}M^{(\lambda)}_{\alpha+\beta},
\eeqsn
for $\tilde{B}=BB_1\sum_{\gamma\in\N_0^d}C^{2(d+1)}/(C^{2(d+1)}+|\gamma|^{d+1})
<+\infty$.  For all $\lambda,h>0$ there exist then $\kappa'\leq\lambda$
and $\tilde{h}=h/(2HB_2)>0$ such that
\beqs
\label{C43}
\left\|\sum\nolimits_{\gamma\in\N_0^d}c_\gamma H_\gamma\right\|_{2,\bfM^{(\lambda)},h}
\leq\tilde{B}\|\bfc\|_{\bfM^{(\kappa')},\tilde h}.
\eeqs
%Note that  $H$ and $B_2$ are  depending only on  $\lambda$ and, not on $h>0$, so $\tilde{h}\rightarrow 0$ if and only if $h\rightarrow 0$. 
This shows that $T^{-1}$ is continuous on $\Sch_{(\M)}$ and that $(H_\gamma)_\gamma$ is an absolute  Schauder basis in $\Sch_{(\M)}$, which finishes the proof.
\end{proof}

As in \cite[Corollary 3.6]{L} we also have that the Fourier transform is well adapted to our spaces and it is an isomorphism:
\begin{Cor}\label{Fourier-iso}
Let $\M$ be a weight matrix satisfying \eqref{12L2R} and
\eqref{M2'R} (\eqref{12L2B} and \eqref{M2'B}). Then the Fourier transform is an isomorphism in $\Sch_{\{\M\}}(\R^d)$
($\Sch_{(\M)}(\R^d)$).
\end{Cor}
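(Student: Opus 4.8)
The plan is to reduce the statement to Theorem~\ref{th41G} by exploiting the fact that the Hermite functions are eigenfunctions of the Fourier transform. First I would recall that, with the unitary normalization of $\F$, each Hermite function satisfies $\F(H_\gamma)=(-i)^{|\gamma|}H_\gamma$ for every $\gamma\in\N_0^d$; this follows coordinatewise from the one-dimensional identity $\widehat{H_n}=(-i)^nH_n$ together with the product structure of $H_\gamma$. The decisive point is that each eigenvalue $(-i)^{|\gamma|}$ is unimodular.

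Next I would introduce on the sequence side the diagonal multiplier
$$
D\colon\ \bfc=(c_\gamma)_\gamma\longmapsto\big((-i)^{|\gamma|}c_\gamma\big)_\gamma.
$$
Since every defining seminorm $\|\bfc\|_{\bfM^{(\lambda)},h}=\sup_\gamma|c_\gamma|e^{\omega_{\bfM^{(\lambda)}}(\gamma^{1/2}/h)}$ depends on $\bfc$ only through the moduli $|c_\gamma|$, and $|(-i)^{|\gamma|}c_\gamma|=|c_\gamma|$, the operator $D$ leaves each of these seminorms unchanged. Hence $D$ is an isometric, in particular topological, isomorphism of $\Lambda_{\{\M\}}$ (respectively $\Lambda_{(\M)}$) onto itself, with inverse given by multiplication by $i^{|\gamma|}$.

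Then I would transport this back through the isomorphism $T$ of Theorem~\ref{th41G}. For $f\in\Sch_{\{\M\}}$ (respectively $\Sch_{(\M)}$) the Hermite expansion $f=\sum_\gamma\xi_\gamma(f)H_\gamma$ converges in the topology of the space and, a fortiori, in $L^2(\R^d)$; applying the $L^2$-continuous operator $\F$ term by term and using the eigenfunction identity yields $\F f=\sum_\gamma(-i)^{|\gamma|}\xi_\gamma(f)H_\gamma$, so that $\xi_\gamma(\F f)=(-i)^{|\gamma|}\xi_\gamma(f)$ for every $\gamma$, i.e. $T\circ\F=D\circ T$. Since $D$ maps the coefficient space into itself, $(\xi_\gamma(\F f))_\gamma\in\Lambda_{\{\M\}}$ (respectively $\Lambda_{(\M)}$), and because $T$ is onto there is a unique element of the space with these Hermite coefficients, which is necessarily $\F f$; hence $\F f$ lies in $\Sch_{\{\M\}}$ (respectively $\Sch_{(\M)}$). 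The identity $\F=T^{-1}\circ D\circ T$ then exhibits $\F$ as a composition of isomorphisms, with inverse $T^{-1}\circ D^{-1}\circ T$, which proves the claim.

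The main obstacle I expect is the bookkeeping behind the identity $T\circ\F=D\circ T$, namely justifying the interchange of $\F$ with the infinite Hermite sum and the correct identification of the coefficients $\xi_\gamma(\F f)$. This is where I would invoke the continuous inclusions $\Sch_{\{\M\}},\Sch_{(\M)}\subset\Sch(\R^d)\subset L^2(\R^d)$ and the continuity (indeed unitarity) of $\F$ on $L^2$, so that the termwise computation is legitimate. Once this commutation is secured, everything else follows formally from Theorem~\ref{th41G} and the unimodularity of the eigenvalues $(-i)^{|\gamma|}$.
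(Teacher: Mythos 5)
Your proposal is correct and is essentially the argument the paper has in mind: the corollary is stated as an immediate consequence of Theorem~\ref{th41G} in the spirit of \cite[Corollary 3.6]{L}, namely conjugating the Fourier transform through the isomorphism $T$ and using that $\F H_\gamma=(-i)^{|\gamma|}H_\gamma$ with unimodular eigenvalues, so that the induced diagonal map preserves every seminorm $\|\cdot\|_{\bfM^{(\lambda)},h}$ on the coefficient spaces. Your additional care about identifying $\xi_\gamma(\F f)=(-i)^{|\gamma|}\xi_\gamma(f)$ via $L^2$-convergence of the Hermite expansion and surjectivity of $T$ is exactly the right way to make this rigorous.
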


Now, we prove that the spaces of sequences are nuclear.

\begin{Th}
\label{thm55G}
Let $\M=(M^{(\lambda)}_\alpha)_{\lambda>0,\alpha\in\N_0^d}$ be a weight matrix satisfying \eqref{M2'B}. Then $\Lambda_{(\M)}$ is nuclear.
\end{Th}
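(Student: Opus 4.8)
The plan is to recognize $\Lambda_{(\M)}$ as a K\"othe echelon space and to verify nuclearity through the Grothendieck--Pietsch criterion \cite[Theorem 28.15]{MV}. By the representation \eqref{matrix}, a fundamental increasing system of seminorms on $\Lambda_{(\M)}$ is $\|\cdot\|_{\bfM^{(1/j)},1/j}$, $j\in\N$, where $\|\bfc\|_{\bfM^{(1/j)},1/j}=\sup_{\alpha}|c_\alpha|\,a_{j,\alpha}$ with K\"othe matrix
\[
a_{j,\alpha}:=\exp\omega_{\bfM^{(1/j)}}\!\big(j\,\alpha^{1/2}\big),\qquad j\in\N,\ \alpha\in\N_0^d .
\]
These entries are finite by the standing assumption $\lim_{|\alpha|\to\infty}(M^{(\lambda)}_\alpha)^{1/|\alpha|}=+\infty$ together with Remark~\ref{omegafinite}, and the matrix is increasing in $j$: indeed $\bfM^{(1/(j+1))}\leq\bfM^{(1/j)}$ forces $\omega_{\bfM^{(1/j)}}\leq\omega_{\bfM^{(1/(j+1))}}$, and $t\mapsto\omega_\bfM(t)$ is nondecreasing in each $|t_i|$. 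Thus $\Lambda_{(\M)}$ is the (reduced) projective limit of weighted $\ell^\infty$ spaces attached to $(a_{j,\alpha})$, and it suffices to establish the summability condition: for every $j\in\N$ there exists $m\geq j$ with $\sum_{\alpha\in\N_0^d} a_{j,\alpha}/a_{m,\alpha}<+\infty$.

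The key step is to extract a polynomial gain from the difference $\omega_{\bfM^{(1/j)}}(j\alpha^{1/2})-\omega_{\bfM^{(1/m)}}(m\alpha^{1/2})$, and this is precisely what Lemma~\ref{lemma58G} supplies using only the hypothesis \eqref{M2'B}. Fix $j\in\N$, set $N:=2(d+1)$, and apply \eqref{5BJO-Rodino} with $\lambda=1/j$: there are $0<\kappa\leq 1/j$ and $A,B\geq1$ with $\omega_{\bfM^{(1/j)}}(t)+N\log|t|\leq\omega_{\bfM^{(\kappa)}}(At)+B$ for all $t\neq0$. Now choose an integer $m\geq\max\{Aj,\,1/\kappa\}$, so that $m\geq j$, $1/m\leq\kappa$ (whence $\bfM^{(1/m)}\leq\bfM^{(\kappa)}$ and $\omega_{\bfM^{(\kappa)}}\leq\omega_{\bfM^{(1/m)}}$), and $Aj\leq m$. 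Specializing to $t=j\alpha^{1/2}$ and using the monotonicity of $\omega_{\bfM^{(1/m)}}$ I obtain, for $\alpha\neq0$,
\[
\omega_{\bfM^{(1/j)}}(j\alpha^{1/2})-\omega_{\bfM^{(1/m)}}(m\alpha^{1/2})\leq B-N\log|j\alpha^{1/2}|\leq B-\tfrac{N}{2}\log|\alpha|,
\]
where the last inequality uses $|\alpha^{1/2}|^2=|\alpha|$ and $j\geq1$.

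Exponentiating yields $a_{j,\alpha}/a_{m,\alpha}\leq e^{B}|\alpha|^{-(d+1)}$ for $\alpha\neq0$, while the term $\alpha=0$ equals $1$ since $\omega_\bfM(0)=0$ for normalized sequences. As the number of $\alpha\in\N_0^d$ with $|\alpha|=n$ grows like $n^{d-1}$, the series $\sum_{\alpha\neq0}|\alpha|^{-(d+1)}$ converges, so $\sum_{\alpha}a_{j,\alpha}/a_{m,\alpha}<+\infty$; the Grothendieck--Pietsch criterion then gives the nuclearity of $\Lambda_{(\M)}$. The only genuinely delicate point is the coordinated choice of the decay order $N$ and of the index $m$, so that the logarithmic gain furnished by Lemma~\ref{lemma58G} dominates the polynomial growth of the counting function of $\N_0^d$; once $N=2(d+1)$ is fixed and $m$ is taken large relative to $A$, $j$ and $1/\kappa$, the remaining estimates are routine.
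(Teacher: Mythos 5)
Your proposal is correct and takes essentially the same approach as the paper. The paper likewise reduces the statement to the summability condition \eqref{series} (citing \cite[Theorem 3.1]{BJOS} rather than identifying the K\"othe matrix and invoking \cite[Theorem 28.15]{MV} directly), and then verifies it exactly as you do: Lemma~\ref{lemma58G} with $\lambda=1/j$ and a fixed $N>2d$, replacement of $\kappa$ by the reciprocal of a suitably large integer, monotonicity of the associated weight functions in both the index and the argument, and convergence of $\sum_{\gamma\neq0}|\gamma|^{-N/2}$.
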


\begin{proof}
By \eqref{matrix} and
%we have $\Lambda_{(\M)}=\ell(A)$  for the
%K\"othe matrix
%\beqsn
%A=(a_{k,j})_{k\in\N_0,j\in\N}=\left(e^{\omega_{\bfM^{(1/j)}}(jk^{1/2})}\right)_{k\in\N_0,j\in\N}
%\eeqsn
%with positive entries, i.e. satisfying
%\beqsn
%&&a_{k,j}>0,\qquad\forall k\in\N_0,j\in\N\\
%&&a_{k,j}\leq a_{k,j+1},\qquad\forall k\in\N_0,j\in\N,
%\eeqsn
%since, for $j\geq j'$, $\bfM^{(1/j)}\leq\bfM^{(1/j')}$ and hence
%$\omega_{\bfM^{(1/j)}}\geq\omega_{\bfM^{(1/j')}}$. We refer, for instance, to \cite{MV}
%for more information about K\"othe matrices and sequence spaces $\ell (A)$.
%
%By
\cite[Theorem 3.1]{BJOS}, the sequence space $\Lambda_{(\M)}$ is nuclear
if and only if
\beqs
\label{series}
\forall j\in\N\,\exists\ell\in\N,\ell>j: \ \ \ 
\sum_{\gamma\in\N^d_0} e^{\omega_{\bfM^{(1/j)}}(j\gamma^{1/2})-\omega_{\bfM^{(1/\ell)}}(\ell \gamma^{1/2})}
<+\infty.
%\sum_{k=1}^{+\infty}a_{k,j}a_{k,h}^{-1}<+\infty,
\eeqs
Moreover, by Lemma~\ref{lemma58G} condition \eqref{5BJO-Rodino} is satisfied.
We can thus proceed as in the proof of Theorem~1 of \cite{BJO-Rodino} to
prove that \eqref{5BJO-Rodino} implies that the series in \eqref{series} converges, and hence
$\Lambda_{(\M)}$ is nuclear.
To this aim we fix an index $\lambda>0$ and $N\in\N$ with $N>2d$ and remark that if the inequality \eqref{5BJO-Rodino} holds for $\lambda=1/j$ and $\kappa\le\lambda$, then it holds also if, instead of $\kappa$, we put
$\kappa'=1/h$ with $h\in\N$, $h>[\frac{1}{\kappa}]+1$, since
$\bfM^{(\kappa')}\leq\bfM^{(\kappa)}$ for $\kappa'\leq\kappa$ and hence
$\omega_{\bfM^{(\kappa)}}\leq\omega_{\bfM^{(\kappa')}}$.
Then, for $\ell\geq Ah$ (so that $\ell\geq Aj$ and $\ell\geq h>j$ and note that the constant $A$ is also depending on the chosen $N$):
\beqsn
&&\sum_{\gamma\in\N^d_0}e^{\omega_{\bfM^{(1/j)}}(j\gamma^{1/2})-\omega_{\bfM^{(1/\ell)}}(\ell \gamma^{1/2})}
\leq\sum_{\gamma\in\N^d_0\backslash\{0\}}e^{\omega_{\bfM^{(1/j)}}(j\gamma^{1/2})-\omega_{\bfM^{(1/h)}}(A j\gamma^{1/2})}+1\\
&&\ \ \ \ \ \ \ \ \ \ \ \ \ \ \  \le\sum_{\gamma\in\N^d_0\backslash\{0\}}e^{-N\log|j\gamma^{1/2}|+B}+1=e^Bj^{-N}\sum_{\gamma\in\N^d_0\backslash\{0\}}
\frac{1}{\vert\gamma\vert^{N/2}}+1<+\infty,
\eeqsn
by our choice of $N>2d$. This completes the proof.
\end{proof}

Concerning the Roumieu case we have the following result.

\begin{Th}
\label{thm55G-R}
Let $\M=(M^{(\lambda)}_\alpha)_{\lambda>0,\alpha\in\N_0^d}$ be a weight matrix  satisfying \eqref{M2'R}. Then $\Lambda_{\{\M\}}$ is nuclear.
\end{Th}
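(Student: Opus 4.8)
The plan is to follow verbatim the scheme of the Beurling case in Theorem~\ref{thm55G}, replacing the projective description \eqref{matrix} by the inductive one in \eqref{matrix-R} and the estimate \eqref{5BJO-Rodino} by its Roumieu counterpart \eqref{5BJO-Rodino1}. First I would use \eqref{matrix-R} to write $\Lambda_{\{\M\}}$ as the countable inductive limit of the Banach sequence spaces $\ell^\infty(a_j)$ with weights $a_j(\gamma):=e^{\omega_{\bfM^{(j)}}(\gamma^{1/2}/j)}$, which are decreasing in $j$ since $\bfM^{(j)}\leq\bfM^{(\ell)}$ for $j\leq\ell$ (so $\omega_{\bfM^{(j)}}\geq\omega_{\bfM^{(\ell)}}$) and $\gamma^{1/2}/j\geq\gamma^{1/2}/\ell$. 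By the Roumieu counterpart of the Grothendieck--Pietsch type criterion in \cite[Theorem 3.1]{BJOS}, together with the permanence property that a countable inductive limit of Banach spaces is nuclear provided that for each $j$ some linking inclusion $\ell^\infty(a_j)\hookrightarrow\ell^\infty(a_\ell)$ with $\ell>j$ is nuclear, it suffices to verify that
\beqsn
\forall j\in\N\ \exists\ell\in\N,\ \ell>j:\quad
\sum_{\gamma\in\N_0^d}e^{\omega_{\bfM^{(\ell)}}(\gamma^{1/2}/\ell)-\omega_{\bfM^{(j)}}(\gamma^{1/2}/j)}<+\infty,
\eeqsn
the summand being exactly the ratio $a_\ell(\gamma)/a_j(\gamma)$ that governs nuclearity of the corresponding diagonal inclusion.

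To check this series I would fix $j$ and an integer $N>2d$ and apply \eqref{5BJO-Rodino1} of Lemma~\ref{lemma58G} with $\lambda=j$, obtaining $\kappa\geq j$ and $A,B\geq1$ with $\omega_{\bfM^{(\kappa)}}(t)+N\log|t|\leq\omega_{\bfM^{(j)}}(At)+B$ for all $t\neq0$. Choosing any integer $\ell\geq\max\{\kappa,Aj\}$ (hence $\ell>j$), monotonicity of the matrix in $\lambda$ gives $\omega_{\bfM^{(\ell)}}\leq\omega_{\bfM^{(\kappa)}}$, so the same inequality holds with $\bfM^{(\ell)}$ in place of $\bfM^{(\kappa)}$. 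Substituting $t=\gamma^{1/2}/\ell$ for $\gamma\neq0$, and using that $A\gamma^{1/2}/\ell\leq\gamma^{1/2}/j$ coordinatewise (because $A/\ell\leq1/j$) together with the monotonicity of $\omega_{\bfM^{(j)}}$ in each variable, I would obtain
\beqsn
\omega_{\bfM^{(\ell)}}(\gamma^{1/2}/\ell)-\omega_{\bfM^{(j)}}(\gamma^{1/2}/j)
\leq-N\log|\gamma^{1/2}|+N\log\ell+B.
\eeqsn

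Finally, using $|\gamma^{1/2}|^2=\gamma_1+\cdots+\gamma_d=|\gamma|$ as in \eqref{dis3}, the term $\gamma=0$ equals $1$ and the rest of the sum is bounded by $\ell^N e^B\sum_{\gamma\neq0}|\gamma|^{-N/2}$, which converges precisely because $N>2d$. This yields the required series condition and hence the nuclearity of $\Lambda_{\{\M\}}$. The only genuinely delicate point is the bookkeeping of indices: \eqref{5BJO-Rodino1} produces $\kappa\geq\lambda$ moving \emph{upwards} in the matrix, so $\ell$ must be taken large enough both to dominate $\kappa$ and to absorb the dilation constant $A$ (that is, $\ell\geq Aj$); this is the exact Roumieu mirror of the remark in the proof of Theorem~\ref{thm55G} that $A$ also depends on $N$, and it is what makes a single application of Lemma~\ref{lemma58G} suffice.
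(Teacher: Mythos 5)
Your computational core is exactly the paper's: you reduce nuclearity to the series condition \eqref{series-R}, and you verify it by applying \eqref{5BJO-Rodino1} of Lemma~\ref{lemma58G} with $\lambda=j$ and a fixed $N>2d$, then choosing $\ell\geq\max\{\kappa,Aj\}$ so that monotonicity of the matrix gives $\omega_{\bfM^{(\ell)}}\leq\omega_{\bfM^{(\kappa)}}$ and the dilation constant is absorbed via $A/\ell\leq 1/j$. This part, including the index bookkeeping and the final comparison with $\sum_{\gamma\neq 0}|\gamma|^{-N/2}$, coincides with the published argument.

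The divergence, and the gap, is in how you pass from ``each linking inclusion $\ell^\infty(a_j)\hookrightarrow\ell^\infty(a_\ell)$ is nuclear'' to ``the (LB)-space $\Lambda_{\{\M\}}$ is nuclear''. You invoke two things: a ``Roumieu counterpart'' of \cite[Theorem 3.1]{BJOS}, and a ``permanence property'' asserting that a countable inductive limit of Banach spaces with nuclear linking inclusions is nuclear. The first is not available as cited: in both places where this paper uses \cite[Theorem 3.1]{BJOS} (here and in Theorem~\ref{thm55G}) it is applied to \emph{Fr\'echet} (projective) weighted sequence spaces, never to an inductive limit, and no Roumieu version is quoted anywhere. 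The second is a true statement, but it is not elementary: it is the classical fact that an injective (LB)-space with nuclear linking maps is a (DFN)-space, i.e.\ the strong dual of the nuclear Fr\'echet space $\proj_j\,(\ell^\infty(a_j))'_b$, and that strong duals of nuclear Fr\'echet spaces are nuclear \cite[pg.~78]{Pi}. Proving it requires the Silva-space duality identifying the inductive limit topology with that strong-dual topology, plus (for the nuclearity of the projective limit of duals) a quasinuclearity argument to handle the corestrictions that define the local Banach spaces, since nuclearity of an operator between Banach spaces does not pass to corestrictions onto closed subspaces of the range. You give neither a proof nor a precise reference, and this assertion is the only structural step of the whole theorem; as written, that is a genuine gap.

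It is worth seeing how the paper fills precisely this hole. Instead of quoting (DFN)-theory, it introduces the concrete K\"othe echelon space $\lambda_{(\mathcal{M})}$ of order one with weights $a_{\alpha,j}=e^{-\omega_{\bfM^{(j)}}(\alpha^{1/2}/j)}$, checks condition (D) of \cite{BM} so that $\lambda_{(\mathcal{M})}$ is distinguished by \cite[Theorem 18(1)]{BB}, and deduces from \cite[Corollary 8{\it(f)}]{BB} together with \eqref{matrix-R} the \emph{topological} identity $(\lambda_{(\mathcal{M})})_b'=\Lambda_{\{\M\}}$; distinguishedness is exactly what guarantees that the strong dual topology agrees with your inductive limit topology. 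Nuclearity is then transferred through \cite[pg.~78]{Pi}, and \cite[Theorem 3.1]{BJOS} is applied on the Fr\'echet side, where it is actually stated. If you replace your two invocations by this duality argument, or by a precise reference for the nuclearity of (LB)-spaces with nuclear linking maps, the rest of your proof stands unchanged.
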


\begin{proof}
For
\beqsn
a_{\alpha,j}:= e^{-\omega_{\bfM^{(j)}}(\alpha^{1/2}/j)},
\eeqsn
we consider the matrices
\beqsn
A:=\left( a_{\alpha,j}\right)_{\alpha\in\N_0^d,\ j\in\N},\qquad V:=\left( v_{\alpha,j}\right)_{\alpha\in\N_0^d,\ j\in\N}\ \text{with}\ v_{\alpha,j}=a_{\alpha,j}^{-1}.
\eeqsn
We observe that $A$ is a K\"{o}the matrix since its entries are strictly positive and $a_{\alpha,j}\le a_{\alpha,j+1}$ for every $j\in\N$. We consider now the space
\beqsn
\lambda_{(\mathcal{M})}:=\{{\bf c}=(c_\alpha)\in \C^{\N_0^d}:\ \forall j\in\N,\ \sum_{\alpha\in\N_0^d} |c_\alpha| a_{\alpha,j}<\infty\}.
\eeqsn
Since $\N_0^d = \cup_{m\in\N}I_m$ with $I_m=\{\alpha\in\N_0^d : |\alpha|\leq m\}$
and $v_{\alpha,j}>0$ for every $\alpha$ and $j$, we have that the matrix $V$ satisfies the \emph{condition (D)} of \cite{BM} (see also \cite{BB}). From \cite[Theorem 18(1)]{BB} we have that $\lambda_{(\mathcal{M})}$ is distinguished, and then, from \cite[Corollary 8{\it (f)}]{BB} and \eqref{matrix-R} we get 
\beqsn
\left(\lambda_{(\mathcal{M})}\right)_b' = \Lambda_{\{\mathcal{M}\}}.
\eeqsn
Since a Fr\'echet space is nuclear if and only if its dual is nuclear
\cite[pg.~78]{Pi}, it is enough to prove that $\lambda_{(\mathcal{M})}$ is nuclear; from \cite[Theorem 3.1]{BJOS} this is true if and only if
\beqs
\label{series-R}
\forall k\in\N\,\exists m\in\N,m>k:\ \
\sum_{\gamma\in\N^d_0} e^{\omega_{\bfM^{(m)}}(\gamma^{1/2}/m)-\omega_{\bfM^{(k)}}(\gamma^{1/2}/k)}
<+\infty.
\eeqs
By Lemma~\ref{lemma58G} we can now use \eqref{5BJO-Rodino1} with $\lambda=k$ and with a fixed $N>2d$; since $\omega_{\bfM^{(m)}}(t)\leq \omega_{\bfM^{(\kappa)}}(t)$ for every $m\geq \kappa$, we can replace in \eqref{5BJO-Rodino1} $\kappa$ by $m=\max\{\kappa,Ak\}$, obtaining that for every $k\in\N$ there exists $m\geq k$ such that
\beqsn
\omega_{\bfM^{(m)}}\left(\frac{\gamma^{1/2}}{m}\right)+N\log\left\vert\frac{\gamma^{1/2}}{m}\right\vert\leq \omega_{\bfM^{(k)}}\left(A\frac{\gamma^{1/2}}{m}\right)+B,
\eeqsn
for every $\gamma\neq 0$. Since $A\leq m/k$ we obtain
\beqsn
e^{\omega_{\bfM^{(m)}}(\gamma^{1/2}/m)-\omega_{\bfM^{(k)}}(\gamma^{1/2}/k)}\leq e^B m^N\frac{1}{|\gamma^{1/2}|^N}\leq e^B m^N\frac{1}{|\gamma|^{N/2}},
\eeqsn
for $\gamma\neq 0$;
since $N>2d$ we have that \eqref{series-R} holds, and then by estimating as in Theorem \ref{thm55G} the proof is complete.
\end{proof}

%\begin{Rem}
%\label{remhigher}
%\begin{em}
%The proof of Theorem~\ref{thm55G} does not work in the higher dimensional case
%(we should use  Theorem 3.1 of \cite{BJOS} instead of Proposition 28.16 of \cite{MV}), since
%\beqsn
%\sum_{\gamma\in\N_0^d}
%e^{\omega_{\bfM^{(1/j)}}(j\gamma^{1/2})-\omega_{\bfM^{(1/\ell)}}(\ell \gamma^{1/2})}
%=&&\sum_{\afrac{\gamma\in\N_0^d}{\gamma_i\neq0\,\forall1\leq i\leq d}}
%e^{\omega_{\bfM^{(1/j)}}(j\gamma^{1/2})-\omega_{\bfM^{(1/\ell)}}(\ell \gamma^{1/2})}\\
%&&+\sum_{\afrac{\gamma\in\N_0^d}{\gamma_i=0\ \mbox{{\scriptsize for some }}i}}
%e^{\omega_{\bfM^{(1/j)}}(j\gamma^{1/2})-\omega_{\bfM^{(1/\ell)}}(\ell \gamma^{1/2})}
%\eeqsn
%and the first summation converges by the same computations as in the one-dimensional
%case (for $N>2d$), but the second summation is $+\infty$ since
%$e^{\omega_{\bfM^{(1/j)}}(j\gamma^{1/2})-\omega_{\bfM^{(1/\ell)}}(\ell \gamma^{1/2})}
%\equiv1$ if $\gamma_i=0$ for some $1\leq i\leq d$, by definition of associated function.
%\end{em}
%\end{Rem}

\begin{Cor}
\label{lemma57Gend}
If $\M=(M^{(\lambda)}_\alpha)_{\lambda>0,\alpha\in\N_0^d}$ is a weight matrix
satisfying \eqref{12L2B} and \eqref{M2'B}, then the space $\Sch_{(\M)}$ is nuclear.
If $\M$ satisfies \eqref{12L2R} and \eqref{M2'R}, then the space $\Sch_{\{\M\}}$ is nuclear.

%as in
%Proposition~\ref{lemma57G}. Then the space $\Sch_{(\M)}$ is nuclear if and only if condition
%$(b)$ of Proposition~\ref{lemma57G} is satisfied.
\end{Cor}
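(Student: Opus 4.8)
The plan is to reduce the nuclearity of the function spaces to that of the associated sequence spaces, which has already been established, by transporting it through the Hermite isomorphism. The key observation is that nuclearity is a linear-topological invariant, hence preserved under topological isomorphisms, so that once $\Sch_{(\M)}$ (resp.\ $\Sch_{\{\M\}}$) is identified with a nuclear sequence space it is itself nuclear.

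First I would treat the Beurling case. Assuming \eqref{12L2B} and \eqref{M2'B}, Theorem~\ref{th41G} provides that the map $T$ is a topological isomorphism between $\Sch_{(\M)}$ and $\Lambda_{(\M)}$. On the other hand, \eqref{M2'B} is exactly the hypothesis of Theorem~\ref{thm55G}, which yields that $\Lambda_{(\M)}$ is nuclear. Since nuclearity passes to isomorphic locally convex spaces, I conclude that $\Sch_{(\M)}$ is nuclear.

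For the Roumieu case I would argue in the same way. Assuming \eqref{12L2R} and \eqref{M2'R}, Theorem~\ref{th41G} gives that $T\colon\Sch_{\{\M\}}\to\Lambda_{\{\M\}}$ is a topological isomorphism, while \eqref{M2'R} is precisely the hypothesis of Theorem~\ref{thm55G-R}, so that $\Lambda_{\{\M\}}$ is nuclear. Invoking once more the invariance of nuclearity under isomorphism, $\Sch_{\{\M\}}$ is nuclear.

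Since all the substantive work has been carried out in Theorems~\ref{th41G}, \ref{thm55G} and \ref{thm55G-R}, there is no genuine obstacle here: the corollary is merely the combination of the Hermite isomorphism with the nuclearity of the model sequence spaces. The only point worth stressing is the distribution of hypotheses, namely that the conditions \eqref{12L2B}/\eqref{12L2R} are used only to guarantee the isomorphism (and, via Remark~\ref{omegafinite2}, the finiteness of the associated weight functions), whereas the nuclearity of the sequence spaces requires just the mixed derivation-closedness \eqref{M2'B}/\eqref{M2'R}.
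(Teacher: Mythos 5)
Your proposal is correct and is exactly the paper's own argument: the paper proves this corollary by combining Theorem~\ref{th41G} with Theorem~\ref{thm55G} (Beurling) and Theorem~\ref{thm55G-R} (Roumieu), using that nuclearity is invariant under topological isomorphism. Your remark on how the hypotheses split between the isomorphism and the sequence-space nuclearity is also accurate.
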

\begin{proof}
The Beurling case follows from Theorems~\ref{th41G} and \ref{thm55G}, and the Roumieu case follows from Theorems~\ref{th41G} and \ref{thm55G-R}.
\end{proof}

\begin{Prop}
\label{lemma57G}
Let $\M=(M^{(\lambda)}_p)_{\lambda>0,p\in\N_0}$ be a weight matrix (with $d=1$), such that each sequence $\bfM^{(\lambda)}$ satisfies \eqref{M1} and $\lim_{p\rightarrow\infty}(M_p)^{1/p}=+\infty$. Assume, moreover, that
\beqsn
\mu^{(\lambda)}_p:=\frac{M^{(\lambda)}_p}{M^{(\lambda)}_{p-1}},\qquad p\in\N,
\eeqsn
satisfies $\mu^{(\lambda)}\leq\mu^{(\kappa)}$ for all $0<\lambda\leq\kappa$ and
$\mu^{(\lambda)}_0=1$ for all $\lambda>0$.
Then the following conditions are equivalent:
\begin{itemize}
\item[(a)]
$\forall j\in\N\ \exists\,\ell\in\N,\ell>j:\ \ 
\sum_{k=1}^{+\infty}
e^{\omega_{\bfM^{(1/j)}}(jk^{1/2})-\omega_{\bfM^{(1/\ell)}}(\ell k^{1/2})}<+\infty$;
\item[(b)]
$\forall\;\lambda>0\ \exists\;0<\kappa<\lambda, A\geq1\ \forall\,p\in\N: \ \
M^{(\kappa)}_{p+1}\leq A^{p+1}M^{(\lambda)}_p.$
\end{itemize}
\end{Prop}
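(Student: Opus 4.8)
The plan is to treat the two implications by quite different means. Observe first that, in dimension $d=1$, condition $(b)$ is just the mixed derivation closedness property \eqref{M2'B} (with the harmless strengthening $\kappa<\lambda$), so $(b)\Rightarrow(a)$ requires no new idea. Indeed, assuming $(b)$, Lemma~\ref{lemma58G} gives \eqref{5BJO-Rodino}, and then the computation carried out in the proof of Theorem~\ref{thm55G} (with $d=1$ and a fixed $N>2$) shows that the series in \eqref{series} converges; since for $d=1$ the sum in \eqref{series} differs from the one in $(a)$ only by the single term $\gamma=0$, which equals $1$, this is precisely $(a)$. I would dispatch this direction in one or two lines, pointing to Lemma~\ref{lemma58G} and Theorem~\ref{thm55G}.

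The substantive direction is $(a)\Rightarrow(b)$, and this is where the special one-dimensional hypotheses (log-convexity of each $\bfM^{(\lambda)}$ and the monotonicity $\mu^{(\lambda)}\le\mu^{(\kappa)}$ in $\lambda$) enter. Fix $\lambda>0$, choose $j\in\N$ with $1/j\le\lambda$, let $\ell>j$ be provided by $(a)$, and put $P:=\bfM^{(1/j)}$, $Q:=\bfM^{(1/\ell)}$ and $A_0:=\ell/j\ge1$. By Lemma~\ref{lemma23G} it suffices to prove the pointwise estimate $\omega_P(t)+\log t\le\omega_Q(A_0t)+B$ for all $t>0$ and some $B$: this yields $M^{(1/\ell)}_{p+1}\le A_0^{p+1}M^{(1/j)}_p\le A_0^{p+1}M^{(\lambda)}_p$, and since $1/\ell<1/j\le\lambda$ we obtain $(b)$ with $\kappa=1/\ell$ and $A=A_0$. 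Setting $g(t):=\omega_P(t)-\omega_Q(A_0t)$, the goal becomes $\sup_{t>0}\bigl(g(t)+\log t\bigr)<\infty$, while hypothesis $(a)$ reads $\sum_{k\ge1}e^{g(jk^{1/2})}<+\infty$, because $\omega_Q(\ell k^{1/2})=\omega_Q(A_0\cdot jk^{1/2})$.

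Two facts will close the argument. First, $g$ is nonincreasing. Here I use that, each $\bfM^{(\lambda)}$ being log-convex, $u\mapsto\omega_{\bfM^{(\lambda)}}(e^u)$ is convex and piecewise linear with slope equal to the counting function $\#\{q\ge1:\mu^{(\lambda)}_q\le e^u\}$; hence $\frac{d}{d\log t}g(t)=\#\{q:\mu^{(1/j)}_q\le t\}-\#\{q:\mu^{(1/\ell)}_q\le A_0t\}\le0$, because $\mu^{(1/\ell)}_q\le\mu^{(1/j)}_q$ and $A_0\ge1$ force every index counted on the left to be counted on the right. One also notes $g\le0$ everywhere, since $\omega_Q\ge\omega_P$ and $\omega_P$ is nondecreasing with $A_0\ge1$. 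Second, an elementary summability lemma: if $g$ is nonincreasing and $\sum_{k\ge1}e^{g(jk^{1/2})}<\infty$, then $t\mapsto g(t)+\log t$ is bounded above. To see this I would, for $t\ge j$, count the integers $k$ with $t<jk^{1/2}\le2t$ — there are at least $3(t/j)^2-1$ of them — and use $g(jk^{1/2})\ge g(2t)$ on this block to get $\bigl(3(t/j)^2-1\bigr)e^{g(2t)}\le\sum_{k\ge1}e^{g(jk^{1/2})}<\infty$; thus $t^2e^{g(2t)}$ is bounded, i.e. $g(s)+2\log s$, hence a fortiori $g(s)+\log s$, is bounded above for large $s$, while on any bounded range $g\le0$ makes $g(s)+\log s$ obviously bounded.

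Combining the two facts gives $\omega_P(t)+\log t\le\omega_Q(A_0t)+B$, and Lemma~\ref{lemma23G} converts this into the desired ratio estimate, completing $(a)\Rightarrow(b)$ as explained above. I expect the main obstacle to be the monotonicity of $g$: it is the only step that genuinely uses log-convexity together with the ordering of the ratio sequences $\mu^{(\lambda)}$, and it is exactly what lets one pass from the ``averaged'' information contained in the convergent series $(a)$ to a pointwise ratio bound. The summability lemma, by contrast, is soft and is driven by the dimension, the exponent $k^{1/2}$ being what supplies the extra power of $t$ needed to beat $\log t$.
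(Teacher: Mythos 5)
Your proposal is correct and takes essentially the same route as the paper: the direction $(b)\Rightarrow(a)$ is dispatched identically (via Lemma~\ref{lemma58G} and the proof of Theorem~\ref{thm55G}), and for $(a)\Rightarrow(b)$ both arguments rest on the same three pillars --- monotonicity of the difference $\omega_{\bfM^{(1/j)}}(jk^{1/2})-\omega_{\bfM^{(1/\ell)}}(\ell k^{1/2})$ coming from log-convexity and the ordering $\mu^{(1/\ell)}\le\mu^{(1/j)}$, an upgrade of the convergent series to the pointwise estimate $\omega_{\bfM^{(1/j)}}(t)+\log t\le\omega_{\bfM^{(1/\ell)}}(At)+B$, and a final application of Lemma~\ref{lemma23G} to convert this into the ratio bound defining $(b)$. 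The only cosmetic differences are that the paper proves monotonicity by splitting into two increasing differences (one handled by convexity of $t\mapsto\omega(e^t)$, the other by the counting functions $\Sigma_{\bfM^{(\lambda)}}$) where you treat both at once through the counting-function representation, and it passes from summability to pointwise decay via the fact that $ka_k\to0$ for a decreasing summable sequence rather than via your dyadic block count; these are minor variants of the same ideas and your version is equally valid.
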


\begin{proof}
If condition $(b)$ is satisfied, then \eqref{M2'B} is satisfied and hence also condition $(a)$, as we already saw in the proof of Theorem~\ref{thm55G}.

Let us now assume condition $(a)$ and prove $(b)$.
To this aim let us first remark that
\beqs
\label{C20}
k\longmapsto\omega_{\bfM^{(1/j)}}(jk^{1/2})-\omega_{\bfM^{(1/\ell)}}(\ell k^{1/2})
\eeqs
is decreasing. Indeed,
\beqsn
\omega_{\bfM^{(1/\ell)}}(\ell k^{1/2})-\omega_{\bfM^{(1/j)}}(j k^{1/2})=&&
\left(\omega_{\bfM^{(1/\ell)}}(\ell k^{1/2})-\omega_{\bfM^{(1/\ell)}}(j k^{1/2})\right)\\
&&+\left(\omega_{\bfM^{(1/\ell)}}(j k^{1/2})-\omega_{\bfM^{(1/j)}}(j k^{1/2})\right)
=:\omega_1+\omega_2.
\eeqsn

The first difference $\omega_1=\omega_{\bfM^{(1/\ell)}}(\ell k^{1/2})-\omega_{\bfM^{(1/\ell)}}(j k^{1/2})$ is increasing since by definition $t\mapsto \omega_{\bfM^{(1/\ell)}}(e^t)$ is convex (see the proof of Theorem~1 in \cite{BJO-Rodino} for the implication that the convexity implies that $\omega_1$ is increasing).

To prove that also the second difference $\omega_2$ is increasing, we set
\beqsn
\Sigma_{\bfM^{(\lambda)}}(t):=\#\{p\in\N:\ \mu^{(\lambda)}_p\leq t\}
\eeqsn
and remark that, by \eqref{M1} (see \cite[formula(3.11)]{K}),
\beqsn
\omega_{\bfM^{(\lambda)}}(t)=\int_0^t\frac{\Sigma_{\bfM^{(\lambda)}}(s)}{s}ds.
\eeqsn

Then
\beqsn
\omega_{\bfM^{(1/\ell)}}(t)-\omega_{\bfM^{(1/j)}}(t)
=\int_0^t\frac{\Sigma_{\bfM^{(1/\ell)}}(s)-\Sigma_{\bfM^{(1/j)}}(s)}{s}ds
\eeqsn
is an increasing function of $t$ since
\beqsn
\Sigma_{\bfM^{(1/\ell)}}(s)\geq \Sigma_{\bfM^{(1/j)}}(s),\qquad \ell>j,
\eeqsn
by the assumption $\mu^{(1/\ell)}_p\leq\mu^{(1/j)}_p$ for $\ell>j$.

Therefore $\omega_1$ and $\omega_2$ are increasing and we have thus proved that
\eqref{C20} is decreasing.
This condition together with assumption $(a)$ implies that
\beqsn
\lim_{k\to+\infty}k
e^{\omega_{\bfM^{(1/j)}}(jk^{1/2})-\omega_{\bfM^{(1/\ell)}}(\ell k^{1/2})}=0.
\eeqsn
There exists then
$A\geq1$ such that
\beqsn
\sup_{k\in\N}ke^{\omega_{\bfM^{(1/j)}}(jk^{1/2})-\omega_{\bfM^{(1/\ell)}}(\ell k^{1/2})}\leq A
\eeqsn
and hence, for all $k\in\N$,
\beqsn
\omega_{\bfM^{(1/j)}}(jk^{1/2})-\omega_{\bfM^{(1/\ell)}}(\ell k^{1/2})
\leq-\log k+\log A\leq -\log(jk^{1/2})+\log(jA).
\eeqsn
Choosing, for every $t\geq1$,  the smallest $k\in\N$ such that
$jk^{1/2}\in[t,(j+1)t]$, we finally have
\beqs
\nonumber
\omega_{\bfM^{(1/j)}}(t)+\log t\leq&&\omega_{\bfM^{(1/j)}}(jk^{1/2})+\log(jk^{1/2})\\
\nonumber
\leq&&\omega_{\bfM^{(1/\ell)}}(\ell k^{1/2})+\log(jA)
%\nonumber
%=&&\omega_{\bfM^{(1/\ell)}}\left(\frac{\ell}{j} j k^{1/2}\right)+\log(jA)\\
\label{19dec}
\leq\omega_{\bfM^{(1/\ell)}}\left(\frac{\ell}{j}(j+1) t\right)+\log(jA).
%\qquad t>0.
\eeqs
Since \eqref{19dec} is trivial for $0<t\leq1$, we have
proved that condition $(ii)$ of Lemma~\ref{lemma23G} is satisfied for
$\bfN=\bfM^{(1/j)}$ and $\bfM=\bfM^{(1/\ell)}$ and hence, from $(i)$ of
Lemma~\ref{lemma23G}, there exists $\tilde A\geq1$ such that
\beqsn
M^{(1/\ell)}_{p+1}\leq\tilde{A}^{p+1}M^{(1/j)}_p,\qquad\forall p\in\N_0.
\eeqsn
Then, for all $\lambda>0$, choosing $j\in\N$ so that $\frac1j\leq\lambda$, there exists
$\kappa=\frac1\ell<\frac1j\leq\lambda$  such that
condition $(b)$ holds.
\end{proof}

Proposition~\ref{lemma57G} yields now the following result.

\begin{Th}
\label{lemma57Gend2}
Let $\M=(M^{(\lambda)}_p)_{\lambda>0,p\in\N_0}$ be a weight matrix as in
Proposition~\ref{lemma57G}. Then the space $\Lambda_{(\M)}$ is nuclear if and only if condition
\eqref{M2'B} is satisfied.
\end{Th}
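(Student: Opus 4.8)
The plan is to close a short cycle of implications that links the nuclearity criterion already at our disposal with the purely sequence-theoretic characterization of Proposition~\ref{lemma57G}. Schematically I would establish
\[
\eqref{M2'B}\ \Longrightarrow\ \Lambda_{(\M)}\ \text{nuclear}\ \Longrightarrow\ (a)\ \Longleftrightarrow\ (b)\ \Longrightarrow\ \eqref{M2'B},
\]
where $(a)$ and $(b)$ are the two conditions of Proposition~\ref{lemma57G}. The first arrow is free: if $\M$ satisfies \eqref{M2'B}, then $\Lambda_{(\M)}$ is nuclear by Theorem~\ref{thm55G} applied directly (our $\M$ is a weight matrix with $d=1$, so that statement covers the present situation verbatim). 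Hence all the remaining work concerns the converse, that nuclearity forces \eqref{M2'B}.

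For the converse I would first turn nuclearity into a summability statement. Using the projective-limit description \eqref{matrix} together with the Grothendieck--Pietsch-type criterion \cite[Theorem 3.1]{BJOS}, nuclearity of $\Lambda_{(\M)}$ is equivalent to the series condition \eqref{series}. Since $d=1$ and the sequences are normalized, so that $\omega_{\bfM^{(\lambda)}}(0)=0$ for every $\lambda>0$, the $\gamma=0$ summand in \eqref{series} equals $1$ and can be discarded; thus \eqref{series} holds if and only if condition $(a)$ of Proposition~\ref{lemma57G} holds. Invoking Proposition~\ref{lemma57G} then yields condition $(b)$: for every $\lambda>0$ there are $0<\kappa<\lambda$ and $A\geq1$ such that $M^{(\kappa)}_{p+1}\leq A^{p+1}M^{(\lambda)}_p$ for all $p\in\N$.

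The last step is to read off \eqref{M2'B} from $(b)$, and this is the only place where a (very mild) discrepancy appears. For $d=1$ condition \eqref{M2'B} asks for $0<\kappa\leq\lambda$ and $A\geq1$ with $M^{(\kappa)}_{p+1}\leq A^{p+1}M^{(\lambda)}_p$ for all $p\in\N_0$, whereas $(b)$ provides this only for $p\in\N$ (and even with the stronger $\kappa<\lambda$). The inequalities for $p\geq1$ transfer verbatim, and the single missing case $p=0$ reduces, because $M^{(\lambda)}_0=1$, to the bound $M^{(\kappa)}_1\leq A'$, which is secured by replacing $A$ with $A':=\max\{A,M^{(\kappa)}_1\}\geq1$. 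As $\kappa<\lambda$ entails $\kappa\leq\lambda$, this is exactly \eqref{M2'B}, closing the cycle.

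I do not expect any genuine obstacle here: the substantive content has already been carried out in Theorem~\ref{thm55G}, in the criterion of \cite{BJOS}, and above all in Proposition~\ref{lemma57G}, whose proof contains the delicate monotonicity argument (through the counting function $\Sigma_{\bfM^{(\lambda)}}$ and the convexity of $t\mapsto\omega_{\bfM^{(\lambda)}}(e^t)$). The present theorem is essentially a repackaging of those results. The only point requiring a moment's care is the bookkeeping between the index ranges $\N$ and $\N_0$ and between the strict and non-strict inequalities on $\kappa$, both resolved trivially as indicated above.
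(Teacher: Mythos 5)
Your proof is correct and follows essentially the same route as the paper: the paper's (very terse) proof likewise combines the equivalence ``$\Lambda_{(\M)}$ nuclear $\Leftrightarrow$ \eqref{series}'' established via \cite[Theorem 3.1]{BJOS} in the proof of Theorem~\ref{thm55G} with the equivalence $(a)\Leftrightarrow(b)$ of Proposition~\ref{lemma57G}. Your explicit handling of the bookkeeping (discarding the $\gamma=0$ term using $\omega_{\bfM}(0)=0$, and recovering the $p=0$ case of \eqref{M2'B} from normalization by enlarging $A$) only makes rigorous what the paper leaves implicit.
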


\begin{proof}
It follows from Theorem~\ref{thm55G} and, in particular, \eqref{series}.
\end{proof}

\begin{Th}
Let $\M=(M^{(\lambda)}_p)_{\lambda>0,p\in\N_0}$ be a weight matrix as in
Proposition~\ref{lemma57G}. Then the space $\Lambda_{\{\M\}}$ is nuclear if and only if condition
\eqref{M2'R} is satisfied.
\end{Th}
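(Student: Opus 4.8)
The plan is to combine Theorem~\ref{thm55G-R} with a Roumieu counterpart of the equivalence $(a)\Leftrightarrow(b)$ of Proposition~\ref{lemma57G}. For the sufficiency there is nothing to add: if \eqref{M2'R} holds, then $\Lambda_{\{\M\}}$ is nuclear by Theorem~\ref{thm55G-R}. For the necessity I would first recall from the proof of Theorem~\ref{thm55G-R} that $\Lambda_{\{\M\}}$ is the strong dual of the K\"othe--Fr\'echet space $\lambda_{(\M)}$, that a Fr\'echet space is nuclear if and only if its strong dual is, and that by \cite[Theorem 3.1]{BJOS} the nuclearity of $\lambda_{(\M)}$ is equivalent to condition \eqref{series-R}. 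Hence the theorem reduces to proving \eqref{series-R}$\Rightarrow$\eqref{M2'R} in the one-dimensional case, i.e.\ the exact analogue of $(a)\Rightarrow(b)$ of Proposition~\ref{lemma57G}.

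So I would fix integers $k<m$ witnessing \eqref{series-R} and set $S(p):=\omega_{\bfM^{(m)}}(p^{1/2}/m)-\omega_{\bfM^{(k)}}(p^{1/2}/k)$. As in Proposition~\ref{lemma57G}, the first step is to show that $p\mapsto S(p)$ is decreasing, by writing $-S(p)$ as the sum of $\omega_1:=\omega_{\bfM^{(k)}}(p^{1/2}/k)-\omega_{\bfM^{(k)}}(p^{1/2}/m)$ and $\omega_2:=\omega_{\bfM^{(k)}}(p^{1/2}/m)-\omega_{\bfM^{(m)}}(p^{1/2}/m)$. Here $\omega_1$ is increasing because $t\mapsto\omega_{\bfM^{(k)}}(e^t)$ is convex and $1/k>1/m$; and $\omega_2$ is increasing because $\omega_{\bfM^{(k)}}-\omega_{\bfM^{(m)}}=\int_0^{\cdot}(\Sigma_{\bfM^{(k)}}-\Sigma_{\bfM^{(m)}})(s)\,s^{-1}\,ds$, where from the hypothesis $\mu^{(k)}\leq\mu^{(m)}$ for $k<m$ one gets $\Sigma_{\bfM^{(k)}}\geq\Sigma_{\bfM^{(m)}}$. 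Note that the roles of the matrix indices are reversed relative to the Beurling case, but the signs work out in exactly the same way.

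Since $\sum_p e^{S(p)}$ converges and $e^{S(p)}$ is nonincreasing, the standard estimate gives $p\,e^{S(p)}\to0$, so $\sup_{p\geq1}p\,e^{S(p)}=:A<\infty$, and we may take $A\geq1$; thus $S(p)\leq\log A-\log p$. Using $-\log p\leq-\tfrac12\log p$ for $p\geq1$, this rearranges to $\omega_{\bfM^{(m)}}(s)+\log s\leq\omega_{\bfM^{(k)}}((m/k)s)+B$ for a suitable constant $B>0$ and all $s$ of the form $p^{1/2}/m$, $p\in\N$. To pass to arbitrary $t>0$ I would, as in \eqref{19dec}, choose for each $t\geq1$ the least $p\in\N$ with $p^{1/2}/m\geq t$, so that $p^{1/2}/m\in[t,\sqrt2\,t]$, and then use the monotonicity of $\omega_{\bfM^{(m)}}$, of $\log$, and of $\omega_{\bfM^{(k)}}$, together with $\omega_{\bfM^{(m)}}\leq\omega_{\bfM^{(k)}}$ (the range $0<t\leq1$ being trivial) to obtain
\[
\omega_{\bfM^{(m)}}(t)+\log t\leq\omega_{\bfM^{(k)}}\Big(\tfrac{\sqrt2\,m}{k}\,t\Big)+B,\qquad t>0 .
\]
This is precisely condition $(ii)$ of Lemma~\ref{lemma23G} with $\bfN=\bfM^{(m)}$ and $\bfM=\bfM^{(k)}$, so Lemma~\ref{lemma23G}$(i)$ yields $\tilde A\geq1$ with $M^{(k)}_{p+1}\leq\tilde A^{\,p+1}M^{(m)}_p$ for all $p\in\N_0$.

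Finally I would upgrade this from integer indices to all $\lambda>0$: given $\lambda>0$, choose an integer $k\geq\lambda$ and the associated $m>k$; since $\bfM^{(\lambda)}\leq\bfM^{(k)}$ we obtain $M^{(\lambda)}_{p+1}\leq M^{(k)}_{p+1}\leq\tilde A^{\,p+1}M^{(m)}_p$, which is exactly \eqref{M2'R} with $\kappa:=m\geq\lambda$. I expect the main obstacle to be purely bookkeeping rather than conceptual: one must keep the direction of the matrix-index monotonicity straight throughout (so that the $\kappa\geq\lambda$ demanded by the Roumieu condition \eqref{M2'R} genuinely appears, instead of the $\kappa\leq\lambda$ of the Beurling case), and make the discrete-to-continuous passage in the scaling inequality clean enough to be fed into Lemma~\ref{lemma23G}.
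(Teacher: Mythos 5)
Your proof is correct and takes essentially the same route as the paper's: both reduce the statement, via the duality argument in the proof of Theorem~\ref{thm55G-R}, to the equivalence of \eqref{series-R} with \eqref{M2'R}, and then prove necessity by showing the general term of the series is decreasing (convexity of $t\mapsto\omega_{\bfM}(e^t)$ plus comparison of the counting functions of the quotients), deducing $\sup_{p\geq1}p\,e^{\omega_{\bfM^{(m)}}(p^{1/2}/m)-\omega_{\bfM^{(k)}}(p^{1/2}/k)}<+\infty$, passing from the discrete points $p^{1/2}/m$ to all $t>0$, and invoking Lemma~\ref{lemma23G}. The only deviations are inessential bookkeeping: your interval $[t,\sqrt{2}\,t]$ versus the paper's $[t,(1+\tfrac1\ell)t]$, and your explicit write-up of the monotonicity step, which the paper delegates to ``similarly as in Proposition~\ref{lemma57G}''.
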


\begin{proof}
By the proof of Theorem~\ref{thm55G-R} we have that 
$\Lambda_{\{\M\}}$ is nuclear if and only if \eqref{series-R} is satisfied, and this is equivalent to \eqref{M2'R} since, analogously as in Proposition~\ref{lemma57G}, the
following two conditions are equivalent:
\begin{itemize}
\item[$(a)'$]
$\forall\, j\in\N\ \exists\,\ell\in\N,\ell>j:\ \
\sum_{k=1}^{+\infty}
e^{\omega_{\bfM^{(\ell)}}(k^{1/2}/\ell)-\omega_{\bfM^{(j)}}(k^{1/2}/j)}<+\infty$,
\item[$(b)'$]
$\forall\,\lambda>0\ \exists\,\kappa>\lambda, A\geq1\ \forall\,p\in\N:\ \
M^{(\lambda)}_{p+1}\leq A^{p+1}M^{(\kappa)}_p$.
\end{itemize}
Indeed, $(b)'$ implies \eqref{M2'R} and hence $(a)'$, i.e. \eqref{series-R},
in the one-dimensional case, by the proof of Theorem~\ref{thm55G-R}.

Conversely, if $(a)'$ holds then for every fixed $j\in\N$, 
and $\ell>j$ as in $(a)'$, there exists $A>\ell$ such that
\beqsn
\sup_{k\in\N}ke^{\omega_{\bfM^{(\ell)}}(k^{1/2}/\ell)-
\omega_{\bfM^{(j)}}(k^{1/2}/j)}\leq A,
\eeqsn
since
\beqsn
k\longmapsto\omega_{\bfM^{(\ell)}}(k^{1/2}/\ell)-\omega_{\bfM^{(j)}}(k^{1/2}/j)
\eeqsn
is decreasing, similarly as in the proof of Proposition~\ref{lemma57G}.
Then, for all $k\in\N$,
\beqsn
\omega_{\bfM^{(\ell)}}(k^{1/2}/\ell)-\omega_{\bfM^{(j)}}(k^{1/2}/j)
\leq-\log k+\log A\leq -\log(k^{1/2}/\ell)+\log(A/\ell).
\eeqsn
If $t\geq1$ we can choose a smallest $k\in\N$ such that
$k^{1/2}/\ell\in[t,(1+\frac1\ell)t]$ and obtain that
\beqs
\nonumber
\omega_{\bfM^{(\ell)}}(t)+\log t\leq&&\omega_{\bfM^{(\ell)}}(k^{1/2}/\ell)+\log(k^{1/2}/\ell)\\
\nonumber
\leq&&\omega_{\bfM^{(j)}}(k^{1/2}/j)+\log(A/\ell)
\nonumber
%=&&\omega_{\bfM^{(j)}}\left(\frac{k^{1/2}}{\ell}\frac{\ell}{j} \right)+\log(A/\ell)\\
\label{19dec3}
\leq\omega_{\bfM^{(j)}}\left(\frac{\ell}{j}\left(1+\frac1\ell\right) t\right)+\log(A/\ell).
\eeqs
Since \eqref{19dec3} is trivial for $0<t\leq1$, we have that
\beqsn
\omega_{\bfM^{(\ell)}}(t)+\log t\leq\omega_{\bfM^{(j)}}(At)+B,\qquad\forall t>0,
\eeqsn
for $A=\frac\ell j\left(1+\frac1\ell\right)\geq1$ and $B=\log(A/\ell)>0$.
By Lemma~\ref{lemma23G} with $\bfM=\bfM^{(j)}$ and
$\bfN=\bfM^{(\ell)}$, for every $\lambda>0$ we can choose $j\in\N$, $j\geq\lambda$
so that $(b)'$ is satisfied for $\kappa=\ell>j\geq\lambda$.
The proof is complete.
\end{proof}

\section{Rapidly decreasing ultradifferentiable functions}
\label{sec5}

We shall now consider weight functions $\omega$ defined as below:
\begin{Def}
\label{defomega}
A {\em weight function} is a continuous increasing function
$\omega:\ [0,+\infty)\to[0,+\infty)$ such that
\begin{itemize}
\item[$(\alpha)$]
$\exists L\geq1\ \forall t\geq0:\ \omega(2t)\leq L(\omega(t)+1)$;
\item[$(\beta)$]
$\omega(t)=O(t^2)$ as $t\to+\infty$;
\item[$(\gamma)$]
$\log t=o(\omega(t))$ as $t\to+\infty$;
\item[$(\delta)$]
$\varphi_\omega(t):=\omega(e^t)$ is convex on $[0,+\infty)$.
\end{itemize}
Then we define $\omega(t):=\omega(|t|)$ if $t\in\R^d$.
\end{Def}

It is not restrictive to assume $\omega|_{[0,1]}\equiv0$.
As usual, we define the Young conjugate $\varphi^*_\omega$ of $\varphi_\omega$ by
\beqsn
\varphi^*_\omega(s):=\sup_{t\geq0}\{ts-\varphi_\omega(t)\},
\eeqsn
which is an increasing convex function such that $\varphi^{**}_\omega=\varphi_\omega$ and  $\varphi^*(s)/s$ is increasing \cite{Hcvx,BMT}. We remark that condition $(\beta)$ and the stronger condition
$\omega(t)=o(t^2)$ as $t$ tends to infinity are needed in the Roumieu and Beurling cases  for 
Corollaries~\ref{th54G} and \ref{cor56G}. On the other hand, condition $(\gamma)$ guarantees that
$\varphi^*_\omega$ is finite, so that, from
the properties of $\varphi^*_\omega$ (see \cite{BMT} or \cite[Lemma A.1]{BJO-PW}) we
easily obtain (cf. \cite{RS}):
\begin{Lemma}
\label{fromRS}
%Let $\omega$ be a weight function
Let $\omega:\ [0,+\infty)\to[0,+\infty)$ be a weight function as in Definition~\ref{defomega}, and set
\beqs
\label{Wlambda}
W^{(\lambda)}_\alpha:=e^{\frac{1}{\lambda}\varphi^*_\omega(\lambda |\alpha|)},
\qquad\lambda>0,\alpha\in\N_0^d.
\eeqs
Then $W^{(\lambda)}_\alpha\in\R$  and the weight matrix
\beqs
\label{calMomega}
\M_\omega:=(\bfW^{(\lambda)})_{\lambda>0}=(W^{(\lambda)}_\alpha)_{\lambda>0,\,\alpha\in\N_0^d}
\eeqs
satisfies the following properties:
\begin{itemize}
\item[(i)]
$W^{(\lambda)}_0=1,\quad\lambda>0$;
\item[(ii)]
$(W^{(\lambda)}_\alpha)^2\leq W^{(\lambda)}_{\alpha-e_i}W^{(\lambda)}_{\alpha+e_i},\quad
\lambda>0,\alpha\in\N^d_0$ with $\alpha_i\neq 0$, and $i=1,\dots,d$;
\item[(iii)]
$\bfW^{(\kappa)}\leq\bfW^{(\lambda)},\quad 0<\kappa\leq\lambda$;
\item[(iv)]
$W^{(\lambda)}_{\alpha+\beta}\leq W^{(2\lambda)}_\alpha W^{(2\lambda)}_\beta,\quad\lambda>0,\alpha,\beta\in\N_0^d$;
\item[(v)]
$\forall h>0\ \exists A\geq1\ \forall\lambda>0\ \exists D\geq1\ \forall \alpha\in\N_0^d:\ \ 
\ h^{|\alpha|}W^{(\lambda)}_\alpha\leq DW^{(A\lambda)}_\alpha;$
\item[(vi)] Both conditions \eqref{M2'R} and \eqref{M2'B} are valid;
\item[(vii)]
Conditions \eqref{37LR} and \eqref{37LB} are satisfied for $\kappa=\lambda$ and
$A=1$.
\end{itemize}
\end{Lemma}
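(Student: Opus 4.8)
The plan is to reduce every item to elementary properties of the Young conjugate $\varphi^*_\omega$ (which, as recalled just before the statement, is increasing, convex, satisfies $\varphi^{**}_\omega=\varphi_\omega$, and has $s\mapsto\varphi^*_\omega(s)/s$ increasing) together with the defining conditions $(\alpha)$--$(\delta)$. The linchpin is the representation
\[
W^{(\lambda)}_\alpha=\exp\Big(\tfrac1\lambda\varphi^*_\omega(\lambda|\alpha|)\Big)=\sup_{t>0}t^{|\alpha|}e^{-\frac1\lambda\omega(t)},\qquad\alpha\in\N_0^d,\ \lambda>0,
\]
which I would establish first: writing $\varphi^*_\omega(s)=\sup_{u\ge0}(su-\varphi_\omega(u))$ and substituting $u=\log t$ (using $\varphi_\omega(u)=\omega(e^u)$ and $\omega|_{[0,1]}\equiv0$) turns the defining formula \eqref{Wlambda} into the stated supremum. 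Condition $(\gamma)$ guarantees $\varphi^*_\omega<+\infty$, so $W^{(\lambda)}_\alpha\in\R$. Since $W^{(\lambda)}_\alpha$ depends only on $|\alpha|$, every multi-index statement reduces to the one-dimensional sequence $p\mapsto\exp(\frac1\lambda\varphi^*_\omega(\lambda p))$.

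With this in hand, the soft items are immediate. For (i) I would use $\varphi_\omega(0)=\omega(1)=0$ and the monotonicity of $\varphi_\omega$ to get $\varphi^*_\omega(0)=0$, whence $W^{(\lambda)}_0=1$. Items (ii) and (iv) follow from convexity of $\varphi^*_\omega$: writing $g_\lambda(p):=\frac1\lambda\varphi^*_\omega(\lambda p)$, midpoint convexity gives $2g_\lambda(p)\le g_\lambda(p-1)+g_\lambda(p+1)$, which is (ii); and $\varphi^*_\omega(\lambda|\alpha+\beta|)=\varphi^*_\omega(\frac12(2\lambda|\alpha|)+\frac12(2\lambda|\beta|))\le\frac12\varphi^*_\omega(2\lambda|\alpha|)+\frac12\varphi^*_\omega(2\lambda|\beta|)$ gives (iv) after dividing by $\lambda$ and exponentiating. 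Item (iii) is exactly the monotonicity of $s\mapsto\varphi^*_\omega(s)/s$, applied to $\kappa|\alpha|\le\lambda|\alpha|$. For (vii) I would use that a convex function vanishing at $0$ is super-additive on $[0,+\infty)$, so $\varphi^*_\omega(\lambda|\alpha|)+\varphi^*_\omega(\lambda|\beta|)\le\varphi^*_\omega(\lambda|\alpha+\beta|)$; dividing by $\lambda$ and exponentiating yields \eqref{37LR} and \eqref{37LB} with $\kappa=\lambda$ and $A=1$.

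The two remaining items are where the nontrivial conditions enter, and (v) is the step I expect to require the most care. For (v) I would pass to the supremum representation, $h^{|\alpha|}W^{(\lambda)}_\alpha=\sup_{s>0}s^{|\alpha|}e^{-\frac1\lambda\omega(s/h)}$, so that it suffices to find $A\ge1$ depending only on $h$ and $D\ge1$ depending on $\lambda$ with $\frac1A\omega(s)-\omega(s/h)\le\lambda\log D$ for all $s$. For $0<h<1$ this is trivial with $A=D=1$; for $h\ge1$ I would pick $n$ with $2^n\ge h$ and iterate condition $(\alpha)$ to obtain $\omega(s)\le L^n\omega(s/2^n)+c_n\le L^n\omega(s/h)+c_n$, so that $A=L^n$ and $D=e^{c_n/(L^n\lambda)}$ work; tracking the quantifier order (that $A=L^{n(h)}$ is independent of $\lambda$, while $D$ absorbs $\lambda$) is the delicate point here.

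Finally, for (vi) I would again use the supremum representation and reduce \eqref{M2'R} (resp.\ \eqref{M2'B}) to the pointwise inequality $t\,e^{-\frac1\lambda\omega(t)}\le A^{p+1}e^{-\frac1\kappa\omega(t)}$ associated with the index shift $p\mapsto p+1$; after dividing the suprema defining $W^{(\lambda)}_{p+1}$ and $W^{(\kappa)}_p$ by $t^p$, taking logarithms this reads $\log t\le(p+1)\log A+(\frac1\lambda-\frac1\kappa)\omega(t)$. Choosing $\kappa>\lambda$ (resp.\ $\kappa<\lambda$) so that $\varepsilon:=|\frac1\lambda-\frac1\kappa|>0$ and invoking $(\gamma)$ in the form $\log t\le\varepsilon\omega(t)+C_\varepsilon$, I can absorb the logarithm by taking $\log A=C_\varepsilon$; taking the supremum in $t$ then yields \eqref{M2'R} and \eqref{M2'B}. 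Throughout, the only genuinely analytic inputs are the conjugacy identity behind the representation, condition $(\alpha)$ for (v), and condition $(\gamma)$ for (vi); all the rest is convexity bookkeeping.
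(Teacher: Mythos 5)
Your proof is correct, but it is organized differently from the paper's. For the soft items you and the paper argue identically: (i) from $\varphi^*_\omega(0)=0$, (ii) from midpoint convexity of $\varphi^*_\omega$, (iii) from monotonicity of $s\mapsto\varphi^*_\omega(s)/s$. The divergence is in (iv), (v), (vi), (vii). The paper does not prove (iv), (v), (vii) from scratch: it quotes \cite[Lemma A.1]{BJO-PW} (parts (ix), (iv) and (ii) of that lemma, respectively), and then obtains (vi) as an immediate consequence of (iv): putting $\beta=e_j$ in (iv) gives $W^{(\lambda)}_{\alpha+e_j}\le W^{(2\lambda)}_{e_j}\,W^{(2\lambda)}_\alpha\le A^{|\alpha|+1}W^{(2\lambda)}_\alpha$ with $A:=\max\{W^{(2\lambda)}_{e_j},1\}$, i.e.\ \eqref{M2'R} with $\kappa=2\lambda$, and symmetrically \eqref{M2'B} with $\kappa=\lambda/2$; no separate use of $(\gamma)$ is needed there. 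You instead make everything self-contained through the Legendre-type representation $W^{(\lambda)}_\alpha=\sup_{t>0}t^{|\alpha|}e^{-\omega(t)/\lambda}$ (valid under the normalization $\omega|_{[0,1]}\equiv0$ that the paper assumes, and parallel to the duality \eqref{33k} and to the estimates in Lemma~\ref{lemma52G}): your convexity and super-additivity arguments for (iv) and (vii) reproduce the content of the cited lemma; your proof of (v) by iterating condition $(\alpha)$, with $A=L^{n}$ for $2^n\ge h$ independent of $\lambda$ and $D$ absorbing $\lambda$, re-derives \cite[Lemma A.1(iv)]{BJO-PW} with the correct quantifier order (indeed the delicate point, and you handle it properly); and your proof of (vi) invokes $(\gamma)$ in the form $\log t\le\varepsilon\omega(t)+C_\varepsilon$. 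Both routes are sound: yours buys independence from the external reference, with the supremum representation serving as a genuinely unifying device, while the paper's citation plus the one-line deduction of (vi) from (iv) is shorter and avoids any direct appeal to $(\gamma)$ beyond finiteness. One small notational caveat: your displayed pointwise inequality for (vi) is written in the Roumieu orientation (shifted index on $W^{(\lambda)}$, comparison against $W^{(\kappa)}$ with $\kappa>\lambda$); in the Beurling case \eqref{M2'B} the exponents $1/\lambda$ and $1/\kappa$ must be interchanged so that the coefficient of $\omega(t)$ becomes $\frac1\kappa-\frac1\lambda>0$ --- your absolute value $\varepsilon=|\frac1\lambda-\frac1\kappa|$ accounts for this implicitly, but the formula as literally displayed would carry a negative coefficient there.
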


\begin{proof}
Let us first remark that condition $(\gamma)$ of Definition~\ref{defomega} ensures that 
$W^{(\lambda)}_\alpha\in\R$ for all $\lambda>0$ and $\alpha\in\N_0^d$.
Condition (i) is trivial since $\varphi^*_\omega(0)=0$.
Condition (ii) follows from the convexity of $\varphi^*_\omega$:
\beqsn
e^{\frac{2}{\lambda}\varphi^*_\omega(\lambda |\alpha|)}=
e^{\frac{2}{\lambda}\varphi^*_\omega\left(\frac{\lambda(|\alpha|-1)+\lambda(|\alpha|+1)}{2}\right)}
\leq e^{\frac{1}{\lambda}\varphi^*_\omega(\lambda |\alpha-e_i|)}e^{\frac{1}{\lambda}
\varphi^*_\omega(\lambda |\alpha+e_i|)}.
\eeqsn
The monotonicity property (iii) is clear since $\varphi^*_\omega(s)/s$ is increasing. Properties (iv), (v) and (vii) follow from \cite[Lemma~A.1]{BJO-PW}. Indeed, from
\cite[Lemma A.1(ix)]{BJO-PW}
\beqsn
e^{\frac{1}{\lambda}\varphi^*_\omega(\lambda |\alpha+\beta|)}\leq
e^{\frac{1}{2\lambda}\varphi^*_\omega(2\lambda |\alpha|)
+\frac{1}{2\lambda}\varphi^*_\omega(2\lambda |\beta|)}.
\eeqsn
From \cite[Lemma~A.1(iv)]{BJO-PW} with $A=L^2+L$ and $B=L^2$, where $L$
is the constant of condition $(\alpha)$ of Definition~\ref{defomega},
\beqsn
h^{|\alpha|}e^{\frac{1}{\lambda}\varphi^*_\omega(\lambda |\alpha|)}
\leq\Lambda_{h,\lambda}e^{\frac{1}{\lambda'}\varphi^*_\omega(\lambda' |\alpha|)}
\eeqsn
for all $\lambda'\geq\lambda B^{[\log h+1]}$ and $\Lambda_{h,\lambda}:=
e^{\frac{1}{\lambda}\left(1+\frac1L\right)[\log h+1]}$.
From \cite[Lemma~A.1(ii)]{BJO-PW}
\beqsn
e^{\frac{1}{\lambda}\varphi^*_\omega(\lambda |\alpha|)+\frac{1}{\lambda}
\varphi^*_\omega(\lambda |\beta|)}\leq
e^{\frac{1}{\lambda}\varphi^*_\omega(\lambda |\alpha+\beta|)}.
\eeqsn
Finally, (vi) is an immediate consequence of (iv).
\end{proof}

Let us now define the spaces of rapidly decreasing $\omega$-ultradifferentiable
functions, in the Roumieu case
\beqsn
\Sch_{\{\omega\}}(\R^d):=&&\big\{f\in C^\infty(\R^d):\ \exists\lambda>0,C>0:\
\sup_{\alpha,\beta\in\N_0^d}\|x^\alpha\partial^\beta f\|_\infty
e^{-\frac{1}{\lambda}\varphi^*_\omega(\lambda |\alpha+\beta|)}\leq C\big\}\\
=&&\big\{f\in C^\infty(\R^d):\ \exists\lambda>0,C>0:\ \|f\|_{\infty,\bfW^{(\lambda)}}:=
\sup_{\alpha,\beta\in\N_0^d}\frac{\|x^\alpha\partial^\beta f\|_\infty}{W^{(\lambda)}_{\alpha+\beta}}\leq C\big\},
\eeqsn
and in the Beurling case
\beqsn
\Sch_{(\omega)}(\R^d):=&&\big\{f\in C^\infty(\R^d):\ \forall\lambda>0\,\exists C_\lambda>0:\
\|f\|_{\infty,\bfW^{(\lambda)}}\leq C_\lambda\big\}.
\eeqsn
From Lemma~\ref{fromRS}(iv) and (vii) (see also \cite[Thm. 4.8]{BJO-Wigner}):
\beqsn
\Sch_{\{\omega\}}(\R^d)=\big\{f\in C^\infty(\R^d):\ \exists\lambda>0,C>0:\
\sup_{\alpha,\beta\in\N_0^d}
\frac{\|x^\alpha\partial^\beta f\|_\infty}{W^{(\lambda)}_{\alpha}
W^{(\lambda)}_{\beta}}\leq C\big\}
\eeqsn
and
\beqsn
\Sch_{(\omega)}(\R^d)=\big\{f\in C^\infty(\R^d):\ \forall\lambda>0\,\exists C_\lambda>0:\
\sup_{\alpha,\beta\in\N_0^d}\frac{\|x^\alpha\partial^\beta f\|_\infty}{W^{(\lambda)}_{\alpha}
W^{(\lambda)}_{\beta}}\leq C_\lambda\big\}.
\eeqsn
We refer to \cite{BJO-Wigner,BJO-PW,GZ} for more equivalent seminorms on
$\Sch_{(\omega)}(\R^d)$, if $\omega(t)=o(t^2)$.

We can also insert $h^{|\alpha+\beta|}$ at the denominator (for some $h>0$ in the
Roumieu case and for all $h>0$ in the Beurling case) by Lemma~\ref{fromRS}(v).
In particular, we have the following
\begin{Prop}
\label{lemma514RS}
Let $\omega$ be a weight function  and $\M_\omega$ the
weight matrix defined in  \eqref{Wlambda}, \eqref{calMomega}. We have $\Sch_{\{\M_\omega\}}=\Sch_{\{\omega\}}(\R^d)$ and $\Sch_{(\M_\omega)}=\Sch_{(\omega)}(\R^d)$ 
and the equalities are also topological.
\end{Prop}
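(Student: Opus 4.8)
The plan is to reduce both equalities, together with their topological content, to the single rescaling property Lemma~\ref{fromRS}(v). Observe first that, by the definitions given above, the seminorm $\|\cdot\|_{\infty,\bfW^{(\lambda)}}$ used to define $\Sch_{\{\omega\}}(\R^d)$ and $\Sch_{(\omega)}(\R^d)$ is nothing but $\|\cdot\|_{\infty,\bfW^{(\lambda)},1}$, so the only formal difference between the two pairs of spaces is the factor $h^{|\alpha+\beta|}$ occurring in the denominator of $\|\cdot\|_{\infty,\bfW^{(\lambda)},h}$. Hence everything will follow once I produce, for each choice of the parameters, a two-sided comparison of the relevant seminorms, the nontrivial half of which is exactly where Lemma~\ref{fromRS}(v) enters.

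The trivial half is immediate. Since $\|\cdot\|_{\infty,\bfW^{(\lambda)},1}=\|\cdot\|_{\infty,\bfW^{(\lambda)}}$, in the Beurling case the seminorm $\|\cdot\|_{\infty,\bfW^{(\lambda)}}$ already belongs to the fundamental system defining the Fr\'echet space $\Sch_{(\M_\omega)}$, which gives continuity of the identity $\Sch_{(\M_\omega)}\to\Sch_{(\omega)}(\R^d)$; symmetrically, in the Roumieu case each Banach step of $\Sch_{\{\omega\}}(\R^d)$ (indexed by $\lambda$) coincides with the step of $\Sch_{\{\M_\omega\}}$ indexed by $(\lambda,1)$, yielding continuity of $\Sch_{\{\omega\}}(\R^d)\to\Sch_{\{\M_\omega\}}$. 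For the nontrivial half I would invoke Lemma~\ref{fromRS}(v), which holds for every $h>0$ and so absorbs the cases $h\le 1$ and $h\ge 1$ uniformly. In the Roumieu case, applying it to the given $h$ provides $A\ge1$ and, for the given $\lambda$, a constant $D\ge1$ with $h^{|\gamma|}W^{(\lambda)}_\gamma\le D\,W^{(A\lambda)}_\gamma$ for all $\gamma\in\N_0^d$; taking $\gamma=\alpha+\beta$ and dividing yields
\[
\|f\|_{\infty,\bfW^{(A\lambda)}}\le D\,\|f\|_{\infty,\bfW^{(\lambda)},h}.
\]
In the Beurling case I would instead apply Lemma~\ref{fromRS}(v) to $1/h$, obtaining $A\ge1$ and, for the index $\lambda/A$, a constant $D\ge1$ with $(1/h)^{|\gamma|}W^{(\lambda/A)}_\gamma\le D\,W^{(\lambda)}_\gamma$, that is $W^{(\lambda/A)}_\gamma\le D\,h^{|\gamma|}W^{(\lambda)}_\gamma$; this gives
\[
\|f\|_{\infty,\bfW^{(\lambda)},h}\le D\,\|f\|_{\infty,\bfW^{(\lambda/A)}}.
\]

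Finally I would assemble these estimates through the universal properties of the limit topologies. In the Beurling (Fr\'echet) case the two displayed inequalities show that the fundamental systems of seminorms of $\Sch_{(\omega)}(\R^d)$ and $\Sch_{(\M_\omega)}$ dominate one another, so the two Fr\'echet topologies coincide and the spaces are equal. In the Roumieu (LB) case, the first displayed inequality shows that the Banach step of $\Sch_{\{\M_\omega\}}$ indexed by $(\lambda,h)$ maps continuously into the step of $\Sch_{\{\omega\}}(\R^d)$ indexed by $A\lambda$; since continuity of a linear map out of a locally convex inductive limit is tested on each step, this yields continuity of the identity $\Sch_{\{\M_\omega\}}\to\Sch_{\{\omega\}}(\R^d)$, and together with the trivial half the two (LB)-spaces coincide topologically. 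I do not expect a genuine obstacle here; the only points requiring care are keeping the quantifier order of Lemma~\ref{fromRS}(v) ($\forall h\,\exists A\,\forall\lambda\,\exists D$) straight so that the rescaled index ($A\lambda$ in the Roumieu case, $\lambda/A$ in the Beurling case) is admissible, and handling the Roumieu inductive limit step by step rather than through a single fundamental system of seminorms.
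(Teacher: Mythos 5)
Your proposal is correct and follows exactly the route the paper intends: the paper states the proposition as an immediate consequence of Lemma~\ref{fromRS}(v), remarking just before it that the factor $h^{|\alpha+\beta|}$ can be inserted in the denominator (for some $h$ in the Roumieu case, for all $h$ in the Beurling case), which is precisely the two-sided seminorm comparison you carry out, with the quantifier order of Lemma~\ref{fromRS}(v) handled correctly so that the rescaled indices $A\lambda$ and $\lambda/A$ are admissible.
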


\begin{Rem}\label{remarkseparating}
	\begin{em}
We observe that for the weight function  $\omega(t)=\log^s(1+t)$, for some $s>1$, we have that $\Sch_{(\omega)}(\R)$ never equals
$\Sch_{(M_p)}(\R)$ for any sequence $(M_p)_{p\in\N_0}$. Hence, $\Sch_{(\omega)}(\R)$ cannot be defined with sequences as in \cite{L} when $(M_p)$ satisfies $(M0)$, $(M1)$ and $(M2)'$ (see \cite{BMM} for the definition of $(M0)$; $(M1)$ and $(M2)'$ are recalled in \eqref{M1} and \eqref{M2'}).  
		
Indeed, by
\cite[Example 20]{BMM},  $\E_{(\omega)}(\R)\neq\E_{(M_p)}(\R)$ for any
sequence $(M_p)$ as considered just above, where $\E_{(\omega)}(\R)$ and $\E_{(M_p)}(\R)$ are the spaces of ultradifferentiable functions defined by weights and sequences (for the definitions see \cite{BMM}). 
We fix a sequence $(M_p)$ and prove that
$\Sch_{(\omega)}(\R)\neq\Sch_{(M_p)}(\R)$. Clearly, we can assume that $(M_p)$ is
non-quasianalytic since the weight $\omega$ is non-quasi-analytic. In particular, $(M_p)$ satisfies $(M0)$ (see  \cite{BMM}, condition $(M3)'$, and use also $(M1)$). 
If $f\in\E_{(M_p)}(\R)\setminus\E_{(\omega)}(\R)$, then there are  a compact set 
$K\subseteq\R$ and $m\in\N$ such that
$$\sup_{j\in\N_0}\sup_{x\in K}| f^{(j)}(x)|e^{-m\varphi^*\left(\frac{j}{m}\right)}=+\infty.$$ 
Hence
\beqsn
\forall n\in\N\ \exists x_n\in K, j_n\in \N\ \mbox{such that }
|f^{(j_n)}(x_n)|\geq ne^{m\varphi^*\left(\frac{j_n}{m}\right)}.
\eeqsn
Since $K$ is compact we can assume that the sequence
$(x_n)$ converges to some $x_0\in K$. Let $\varphi\in\D_{(M_p)}(\R)$ (the space of functions in $\E_{(M_p)}(\R)$ with compact support) with
$\varphi\equiv1$ in a neighbourhood of $x_0$.
Then $g=f\varphi\in\D_{(M_p)}(\R)\subseteq\Sch_{(M_p)}(\R)$ but,
for $n$ sufficiently large,
\beqsn
\frac{|g^{(j_n)}(x_n)|}{e^{m\varphi^*\left(\frac{j_n}{m}\right)}}=
\frac{|f^{(j_n)}(x_n)|}{e^{m\varphi^*\left(\frac{j_n}{m}\right)}}\geq n\longrightarrow+\infty,
\eeqsn
and hence $g\notin\Sch_{(\omega)}(\R)$ (see the definition of $\Sch_{(\omega)}(\R)$ above).
		
Analogously, for $f\in\E_{(\omega)}(\R) \setminus  \E_{(M_p)}(\R)$ we can construct
$g\in\Sch_{(\omega)}(\R) \setminus  \Sch_{(M_p)}(\R)$.
		
The same arguments are valid for the Roumieu case and for dimension bigger than one (considering always isotropic classes). 
\end{em}
\end{Rem}

The following Lemma was proved in dimension $1$ in \cite[Lemma~2.5]{JSS}; here we give a version of it in dimension $d$.
\begin{Lemma}
\label{lemma52G}
Let $\omega$ be a weight function. Then there exists a constant $B>0$ and, for every $\lambda>0$, there exists
$C_\lambda>0$, such that
\beqs
\label{4a}
\lambda\omega_{\bfW^{(\lambda)}}(t)\leq\omega(t)\leq B\lambda
\omega_{\bfW^{(\lambda)}}(t)+C_\lambda,\qquad t\in\R^d.
\eeqs
%where we mean $\omega(t):=\omega(|t|)$.
\end{Lemma}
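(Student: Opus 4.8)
The plan is to prove the two inequalities in \eqref{4a} separately, reducing the $d$-dimensional statement to the one-dimensional Lemma~2.5 of \cite{JSS} together with the elementary norm comparison $|t|_\infty\le|t|\le\sqrt d\,|t|_\infty$, where $|t|_\infty:=\max_{1\le j\le d}|t_j|$. By construction \eqref{Wlambda} the only $\alpha$-dependence of $W^{(\lambda)}_\alpha$ is through $\tfrac1\lambda\varphi^*_\omega(\lambda|\alpha|)$, so the associated weight function reads
\[
\omega_{\bfW^{(\lambda)}}(t)=\sup_{\alpha\in\N_{0,t}^d}\Big(\log|t^\alpha|-\tfrac1\lambda\varphi^*_\omega(\lambda|\alpha|)\Big).
\]
Since it is not restrictive to take $\omega|_{[0,1]}\equiv0$, for $|t|<1$ both sides of \eqref{4a} vanish (the supremum is attained at $\alpha=0$), so I may assume $|t|_\infty\ge1$ throughout.

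For the left inequality I would use $|t^\alpha|=\prod_j|t_j|^{\alpha_j}\le|t|^{|\alpha|}$, hence $\log|t^\alpha|\le|\alpha|\log|t|$. Writing $s=\lambda|\alpha|$ and recalling the biduality $\varphi^{**}_\omega=\varphi_\omega$ stated above, each term of the supremum satisfies $\log|t^\alpha|-\tfrac1\lambda\varphi^*_\omega(\lambda|\alpha|)\le\tfrac1\lambda\big(s\log|t|-\varphi^*_\omega(s)\big)$, and therefore
\[
\log|t^\alpha|-\tfrac1\lambda\varphi^*_\omega(\lambda|\alpha|)\le\tfrac1\lambda\sup_{s\ge0}\big(s\log|t|-\varphi^*_\omega(s)\big)=\tfrac1\lambda\varphi^{**}_\omega(\log|t|)=\tfrac1\lambda\omega(|t|).
\]
Taking the supremum over $\alpha$ gives $\lambda\omega_{\bfW^{(\lambda)}}(t)\le\omega(t)$.

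For the right inequality the idea is to reduce to a single coordinate. Choosing $j_0$ with $|t_{j_0}|=|t|_\infty$ and testing the supremum only on the multi-indices $\alpha=p\,e_{j_0}$, $p\in\N_0$ (which lie in $\N_{0,t}^d$ since $t_{j_0}\neq0$), I get $\omega_{\bfW^{(\lambda)}}(t)\ge\sup_{p\in\N_0}\big(p\log|t|_\infty-\tfrac1\lambda\varphi^*_\omega(\lambda p)\big)=:\Omega_\lambda(|t|_\infty)$, where $\Omega_\lambda$ is precisely the one-dimensional associated weight function of $\bfW^{(\lambda)}$. Applying the one-dimensional Lemma~2.5 of \cite{JSS} to $r=|t|_\infty$ yields a constant $B_1>0$ and, for each $\lambda$, a constant $C^{(1)}_\lambda$ with $\omega(|t|_\infty)\le B_1\lambda\,\Omega_\lambda(|t|_\infty)+C^{(1)}_\lambda\le B_1\lambda\,\omega_{\bfW^{(\lambda)}}(t)+C^{(1)}_\lambda$. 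Finally, using $|t|\le\sqrt d\,|t|_\infty\le2^m|t|_\infty$ with $m$ the least integer such that $2^m\ge\sqrt d$, the monotonicity of $\omega$ and $m$ iterations of condition $(\alpha)$ of Definition~\ref{defomega} give $\omega(t)=\omega(|t|)\le L^m\omega(|t|_\infty)+C''$ for some $C''=C''(d,L)$; combining the two estimates proves \eqref{4a} with $B:=L^mB_1$ (independent of $\lambda$) and $C_\lambda:=L^mC^{(1)}_\lambda+C''$.

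If one prefers a self-contained argument instead of citing \cite{JSS}, the one-dimensional step can be done directly, and this is the heart of the matter and the main obstacle. The map $s\mapsto s\log|t|_\infty-\varphi^*_\omega(s)$ is concave with continuous maximum $\varphi^{**}_\omega(\log|t|_\infty)=\omega(|t|_\infty)$; since consecutive admissible abscissae $s=\lambda p$ are at distance $\lambda$ and the slope there is at most $\log|t|_\infty$, evaluating at the lattice point just below the maximizer loses at most $\lambda\log|t|_\infty$, i.e. $\omega(|t|_\infty)\le\lambda\,\Omega_\lambda(|t|_\infty)+\lambda\log|t|_\infty$. The excess $\lambda\log|t|_\infty$ is then absorbed via condition $(\gamma)$: choosing the threshold so that $\log r\le\tfrac1{2\lambda}\omega(r)$ for large $r$ gives $\lambda\log|t|_\infty\le\tfrac12\omega(|t|_\infty)+c_\lambda$, whence $\omega(|t|_\infty)\le2\lambda\,\Omega_\lambda(|t|_\infty)+2c_\lambda$. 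After this, the passage from $|t|_\infty$ to $|t|$ via condition $(\alpha)$ is routine and is exactly what produces the dimension-dependent multiplicative constant $B$.
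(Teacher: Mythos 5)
Your proof is correct and follows essentially the same route as the paper: the identical biduality argument ($|t^\alpha|\le|t|^{|\alpha|}$ plus $\varphi^{**}_\omega=\varphi_\omega$) for the left inequality, and for the right inequality the same reduction to the coordinate $j_0$ realizing $|t|_\infty$ (testing the supremum on $\alpha=p\,e_{j_0}$), the same one-dimensional discretization estimate with the loss absorbed by condition $(\gamma)$ into a factor $2$ (the paper invokes the proof of Lemma 5.7 of \cite{RS}, you invoke \cite{JSS} or prove it directly by concavity), and the same iterated use of condition $(\alpha)$ to pass from $|t|_\infty$ to $|t|$. The only slip is that excluding $|t|<1$ does not yield $|t|_\infty\ge1$; but in the region $|t|_\infty<1$ one has $\omega(t)\le\omega(\sqrt d\,|t|_\infty)\le\omega(\sqrt d)$ while $\omega_{\bfW^{(\lambda)}}(t)\ge0$, so that region is absorbed into $C_\lambda$, exactly as the paper absorbs the range $|t|\le e^{\varphi^*_\omega(\lambda)/\lambda}$.
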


\begin{proof}
For $t=0$ the thesis is trivial, so we can consider $t\neq 0$. Since $|t^\alpha|\leq |t|^{|\alpha|}$ for every multi-index $\alpha$, we have
\beqsn
\lambda\omega_{\bfW^{(\lambda)}}(t) &=& \lambda\sup_{\alpha\in\N^d_{0,t}} \log\frac{|t^\alpha|}{e^{\varphi^*_\omega(\lambda |\alpha|)/\lambda}}\leq \sup_{\alpha\in\N^d_{0,t}}\left\{ \lambda|\alpha|\log |t| -\varphi^*_\omega(\lambda|\alpha|)\right\} \\
&\leq& \varphi^{**}_\omega(\log |t|) = \omega(t),
\eeqsn
so the first inequality of \eqref{4a} is proved. Now, similarly to \cite[proof of Lemma 5.7]{RS}, we can prove that, for every $t\in\R^d$ such that $|t|\geq e^{\varphi^*_\omega(\lambda)/\lambda}$,
\beqs
\label{4e}
\omega(t)\leq 2\sup_{M\in\N_0}\left\{ \lambda M\log |t|-\varphi^*_\omega(\lambda M)\right\}.
\eeqs

Observe now that for every $t\in\R^d$, we have $|t|\leq \sqrt{d} |t|_\infty\leq d |t|_\infty.$ 
Then by \cite[Remark 2.2(iii)]{BJO-PW},
\beqs
\label{4f}
\omega(t)\leq\omega(d|t|_\infty)\leq D_d\left(\omega(|t|_\infty)+1\right),
\eeqs
for $D_d=L+L^2+\ldots+L^{d-1}$, where $L$ is the constant of condition $(\alpha)$ in
Definition~\ref{defomega}.

Fix now $t\in\R^d$ with $|t|\geq e^{\varphi^*_\omega(\lambda)/\lambda}$ and let $j_0$ be such that $|t|_\infty=|t_{j_0}|$; for every $M\in\N_0$, we then write $\alpha_M:=Me_{j_0}$. We then have $|t|_\infty^M=|t^{\alpha_M}|$, and so by \eqref{4e} we obtain
\beqsn
\omega(|t|_\infty)=\omega(|t_{j_0}|)\leq 2\lambda\sup_{M\in\N_0}\log \frac{|t^{\alpha_M}|}{e^{\varphi^*_\omega(\lambda |\alpha_M|)/\lambda}}\leq 2\lambda\sup_{\alpha\in\N^d_{0,t}} \log\frac{|t^\alpha|}{e^{\varphi^*_\omega(\lambda |\alpha|)/\lambda}}=2\lambda\omega_{\bfW^{(\lambda)}}(t),
\eeqsn
since $\alpha_M\in\N^d_{0,t}$ due to the fact that $t_{j_0}\neq 0$ (we are in fact considering $t\in\R^d$ such that $|t|\geq e^{\varphi^*_\omega(\lambda)/\lambda}$). By \eqref{4f} we then obtain
\beqsn
\omega(t)\leq 2\lambda D_d\omega_{\bfW^{(\lambda)}}(t)+D_d
\eeqsn
for $|t|\geq e^{\varphi^*_\omega(\lambda)/\lambda}$. Then the second inequality of \eqref{4a} holds for
\beqsn
B=2D_d\quad\text{and}\quad C_\lambda=D_d+\sup_{|t|\leq e^{\varphi^*_\omega(\lambda)/\lambda}}\omega(t).
\eeqsn
\end{proof}

\begin{Lemma}
\label{lemma51G}
Let $\omega$ be a weight function and consider the weight matrix
$\M_\omega$ as defined in \eqref{Wlambda}, \eqref{calMomega}. Then, for
$r>0$:
\begin{itemize}
\item[(a)]
$\omega(t)=O(t^{1/r})$ as $t\to+\infty$ if and only if
\beqs
\label{55G}
\forall\,\lambda>0\ \exists\, C,D\geq1\ \forall \alpha\in\N^d:\
\alpha^{r\alpha}\leq CD^{|\alpha|}W^{(\lambda)}_\alpha;
\eeqs
\item[(b)]
$\omega(t)=o(t^{1/r})$ as $t\to+\infty$ if and only if
\beqs
\label{56G}
\forall\,\lambda,D>0\ \exists\, C\geq1\ \forall \alpha\in\N^d:\
\alpha^{r\alpha}\leq CD^{|\alpha|}W^{(\lambda)}_\alpha.
\eeqs
\end{itemize}
Moreover, in the conditions above we can replace ``$\ \forall\,\lambda$" by ``$\ \exists\,\lambda$".
\end{Lemma}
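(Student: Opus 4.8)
The plan is to reduce the anisotropic conditions \eqref{55G} and \eqref{56G} to purely one–dimensional statements in the variable $p:=|\alpha|$, and then to read them off from the definition of the Young conjugate by Legendre duality. For the reduction observe that, since both $W^{(\lambda)}_\alpha=e^{\varphi^*_\omega(\lambda|\alpha|)/\lambda}$ and the majorant $D^{|\alpha|}$ depend only on $|\alpha|$, evaluating \eqref{55G} at $\alpha=pe_1$ (for which $\alpha^{r\alpha}=p^{rp}$ and $W^{(\lambda)}_\alpha=W^{(\lambda)}_p$, writing $W^{(\lambda)}_p:=e^{\varphi^*_\omega(\lambda p)/\lambda}$) gives $p^{rp}\le CD^pW^{(\lambda)}_p$ for all $p\in\N_0$; conversely the elementary bound $\alpha^{r\alpha}=\prod_j\alpha_j^{r\alpha_j}\le|\alpha|^{r|\alpha|}=p^{rp}$ shows that this one–dimensional inequality implies \eqref{55G} back. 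Thus \eqref{55G} is equivalent to
\[
\forall\lambda>0\ \exists C,D\ge1\ \forall p\in\N_0:\quad p^{rp}\le CD^pW^{(\lambda)}_p,
\]
and likewise \eqref{56G} is equivalent to the same statement with the quantifiers $\forall\lambda,D\ \exists C$. The decisive identity is that, unravelling $\varphi^*_\omega(s)=\sup_{u\ge0}(su-\omega(e^u))$, one has $W^{(\lambda)}_p=\sup_{x\ge1}x^pe^{-\omega(x)/\lambda}$, so that the one–dimensional problem is exactly a Legendre–duality statement between the sequence $p^{rp}$ and the weight $\omega/\lambda$.

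For the forward implication in $(a)$ I would insert the hypothesis $\omega(x)\le Kx^{1/r}+K'$ into this identity and carry out the one–variable optimization $\sup_{x\ge1}x^pe^{-cx^{1/r}}=\left(\tfrac{r}{ce}\right)^{rp}p^{rp}$ (attained at $x=(rp/c)^r$ once $p\ge c/r$, the finitely many remaining $p$ being absorbed into $C$), with $c=K/\lambda$. This yields $p^{rp}\le CD^pW^{(\lambda)}_p$ with $C=e^{K'/\lambda}$ and $D=(Ke/(r\lambda))^r$ for every $\lambda>0$, i.e.\ the strong ($\forall\lambda$) form of the condition. For $(b)$ the same computation applies with $K$ arbitrarily small: since $\omega(t)=o(t^{1/r})$ furnishes, for each $K>0$, a constant $K'=K'_K$ with $\omega(x)\le Kx^{1/r}+K'$, and since $D=(Ke/(r\lambda))^r\to0$ as $K\to0$, one obtains \eqref{56G} for every prescribed $\lambda$ and $D$.

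The converse is the delicate point. From $p^{rp}\le CD^pW^{(\lambda)}_p$ and $\varphi^*_\omega(\lambda p)=\lambda\log W^{(\lambda)}_p$ I get the lower bound $\varphi^*_\omega(\lambda p)\ge \lambda r\,p\log p-\lambda p\log D-\lambda\log C$ at the discrete points $s=\lambda p$. The obstacle is to promote this to a lower bound $\varphi^*_\omega(s)\ge rs\log s-as-b$ valid for all real $s\ge0$, because bi-conjugation only sees $\varphi^*_\omega$ on the whole half-line. Here I would use that $\varphi^*_\omega$ is nondecreasing (as $\varphi^*_\omega(s)/s$ is increasing and $\varphi^*_\omega(0)=0$): for $s\in[\lambda p,\lambda(p+1))$ one has $\varphi^*_\omega(s)\ge\varphi^*_\omega(\lambda p)$, and since $rs\log s$ varies by only $O(s)$ across a gap of fixed width $\lambda$, the discrete bound upgrades to the continuous one after enlarging $a$ and $b$. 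Then $\varphi_\omega=\varphi^{**}_\omega$ (condition $(\delta)$) gives, for $x\ge1$,
\[
\omega(x)=\sup_{s\ge0}\bigl(s\log x-\varphi^*_\omega(s)\bigr)\le b+\sup_{s\ge0}\bigl(s(\log x+a)-rs\log s\bigr)=b+\tfrac{r\lambda}{e}\,D^{1/r}\,x^{1/r},
\]
so $\omega(t)=O(t^{1/r})$, proving $(a)$. For $(b)$ one runs this for each $D$: the coefficient $\tfrac{r\lambda}{e}D^{1/r}$ tends to $0$ as $D\to0$, whence $\limsup_{t\to\infty}\omega(t)/t^{1/r}$ is smaller than any $\varepsilon>0$, i.e.\ $\omega(t)=o(t^{1/r})$. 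Finally, the ``moreover'' is automatic: the forward implications produced the strong ($\forall\lambda$) forms while the converses used only a single $\lambda$ (and, for $(b)$, all $D$ for that one $\lambda$), so the $\forall\lambda$ and $\exists\lambda$ versions are both squeezed between $\omega(t)=O(t^{1/r})$ (resp.\ $o$) and are therefore equivalent. Alternatively, one may avoid the explicit conjugate computation and deduce everything from Lemma~\ref{lemma52G} together with the associated weight function of the Gevrey sequence $p^{rp}$, which is comparable to $t^{1/r}$.
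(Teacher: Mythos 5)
Your proof is correct, and while the forward implications coincide in substance with the paper's (insert $\omega(x)\le Kx^{1/r}+K'$ into Young duality and optimize; the resulting $D=(Ke/(\lambda r))^{r}\to 0$ as $K\to 0$ gives part (b)), the rest takes a genuinely different route. For the dimension reduction you use $\alpha^{r\alpha}\le|\alpha|^{r|\alpha|}$ plus evaluation at $\alpha=pe_1$, so everything becomes one-dimensional from the start; the paper instead factors coordinate-wise, $\alpha^{r\alpha}\le\prod_j \tilde{C}_\lambda D_\lambda^{\alpha_j}\tilde{W}^{(\lambda)}_{\alpha_j}$, and recombines via the super-multiplicativity $\tilde{W}^{(\lambda)}_{\alpha_1}\cdots\tilde{W}^{(\lambda)}_{\alpha_d}\le W^{(\lambda)}_{\alpha}$ of Lemma~\ref{fromRS}(vii), so your reduction is the more elementary one (it needs only that $W^{(\lambda)}_\alpha$ depends on $|\alpha|$ alone). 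Evaluating at $pe_1$ does read \eqref{55G} over $\N_0^d$ rather than $\N^d$, but this is harmless: your forward direction establishes the inequality there, and the paper's own converse takes the same licence when it applies \eqref{55G} for $\alpha\in\N^d_{0,z}$. The real divergence is in the converse: the paper bounds the associated function, $\omega_{\bfW^{(\lambda)}}(z)\le\log C+\frac{dr}{e}(|z|D)^{1/r}$, and concludes via the comparison $\omega\le B\lambda\,\omega_{\bfW^{(\lambda)}}+C_\lambda$ of Lemma~\ref{lemma52G}, whereas you never touch the associated function: you lower-bound $\varphi^*_\omega$ at the discrete points $s=\lambda p$, upgrade to all $s\ge0$ by monotonicity of $\varphi^*_\omega$, and invoke $\varphi_\omega=\varphi^{**}_\omega$. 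Your route is self-contained (it bypasses Lemma~\ref{lemma52G} entirely) at the price of doing by hand the discrete-to-continuous interpolation that Lemma~\ref{lemma52G} packages for the paper; the one point worth making explicit is that the enlargement of the linear coefficient $a$ in that interpolation depends only on $r$ and the mesh width $\lambda$, not on $D$ (the correction is $O(s)+O(1)$ with $D$-independent constants, only the additive constant $b$ may pick up $D$-dependence), since otherwise the $D\to0$ limit in your part (b) would not close--as stated your coefficient $\frac{r\lambda}{e}D^{1/r}$ silently omits the $D$-independent factor $e^{c/r}$ coming from this enlargement, which does not affect the conclusion. Finally, your quantifier bookkeeping (forward implications produce the $\forall\lambda$ forms, converses consume only an $\exists\lambda$ form) disposes of the ``moreover'' clause more transparently than the paper's remark that the $\exists\lambda$ case is ``analogous''.
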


\begin{proof}
We only consider the case ``$\ \forall\,\lambda$", since the proof for the case ``$\ \exists\,\lambda$" is analogous.

$(a)$: If $\omega(t)=O(t^{1/r})$ as $t\to+\infty$, there exists $c\geq1$ such that
\beqsn
\omega(t)\leq ct^{1/r}+c,\qquad t\geq0,
\eeqsn
and hence
\beqsn
\varphi_\omega(y)=\omega(e^y)\leq ce^{y/r}+c,\qquad y\geq0.
\eeqsn
Then
\beqs
\nonumber
\varphi^*_\omega(x)=&&\sup_{y\geq0}\{xy-\varphi_\omega(y)\}
\geq\sup_{y\geq0}\{xy-ce^{y/r}\}-c\\
\label{C30}
=&&xr\left(\log\frac{xr}{c}-1\right)-c,\qquad\mbox{if}\ x\geq\frac cr\,.
\eeqs
Therefore, for every $\lambda>0$ and $j\in\N$ with $j\geq\frac{c}{r\lambda}$,
choosing $x=\lambda j$ and multiplying by $1/\lambda$ in \eqref{C30}, we have
\beqsn
\frac1\lambda\varphi^*_\omega(\lambda j)\geq
jr\left(\log\frac{\lambda jr}{c}-1\right)-\frac c\lambda
=\log j^{jr}+jr\log\frac{\lambda r}{ec}-\frac c\lambda
\eeqsn
 and hence, for $j\geq\frac{c}{r\lambda}$,
 \beqs
 \label{4g}
 j^{jr}\leq e^{\frac{1}{\lambda}\varphi^*_\omega(\lambda j)}
 \left(\frac{ec}{\lambda r}\right)^{jr}e^{\frac c\lambda}
 \leq \tilde{C}_\lambda D_\lambda^j\tilde{W}^{(\lambda)}_j
 \eeqs
 for $\tilde{C}_\lambda=e^{c/\lambda}$, $D_\lambda=\max\left\{\left(
 \frac{ec}{\lambda r}\right)^r,1\right\}$, and $\tilde{W}^{(\lambda)}_j=e^{\varphi^*_\omega(\lambda j)/\lambda}$.
 Enlarging the constants $\tilde{C}_\lambda,D_\lambda$ we have \eqref{4g} for all
 $j\in\N$. Then,
 \beqsn
 \alpha^{r\alpha}=\alpha_1^{r\alpha_1}\dots\alpha_d^{r\alpha_d}\leq \tilde{C}_\lambda D_\lambda^{\alpha_1}\tilde{W}^{(\lambda)}_{\alpha_1}\dots \tilde{C}_\lambda D_\lambda^{\alpha_d}\tilde{W}^{(\lambda)}_{\alpha_d},
 \eeqsn
 and so we obtain \eqref{55G} for $C=\tilde{C}_\lambda^d$ in view of Lemma~\ref{fromRS}(vii).

 Conversely, if \eqref{55G} holds then, by definition of associated function we obtain, for $z\in\R^d$,
 \beqsn
 \omega_{\bfW^{(\lambda)}}(z)=\sup_{\alpha\in\N^d_{0,z}}\log\frac{|z^\alpha|}{W^{(\lambda)}_\alpha}
 \leq\sup_{\alpha\in\N^d_{0,z}}\log |z^\alpha|\frac{CD^{|\alpha|}}{\alpha^{r\alpha}}\leq
 \sup_{\alpha\in\N^d_{0,z}}\left(\log C+\sum_{j=1}^d \log\frac{(|z_j| D)^{\alpha_j}}{\alpha_j^{r\alpha_j}}\right).
 \eeqsn
 Consider now $j$ such that $z_j\neq 0$ (otherwise the corresponding addend in the previous sum is $0$). A simple computation shows that
 \beqsn
 \sup_{\alpha_j\in \N}\log\frac{(|z_j| D)^{\alpha_j}}{\alpha_j^{r\alpha_j}}\leq \sup_{s>0}\log\frac{(|z_j| D)^{s}}{s^{rs}}\leq \frac{r}{e}(|z_j| D)^{1/r}.
 \eeqsn
 We then have
 \beqs
 \label{C31}
 \omega_{\bfW^{(\lambda)}}(z)\leq \log C+\sum_{j=1}^d \frac{r}{e}(|z_j| D)^{1/r}\leq \log C+\frac{dr}{e}(|z| D)^{1/r}.
 \eeqs
 By Lemma~\ref{lemma52G},  we have $\omega(z)=\omega(|z|)=O(|z|^{1/r})$ as 
 $|z|\to+\infty$ for $z\in\R^d$,
 which is equivalent to $\omega(t)=O(t^{1/r})$ as $t\to+\infty$ for $t\in\R$.

 $(b)$:
 If $\omega(t)=o(t^{1/r})$ as $t\to+\infty$, then for every $D>0$ there exists $c>0$
 such that
 \beqsn
 \omega(t)\leq Dt^{1/r}+c,\quad t\geq0.
 \eeqsn
 Proceeding as in $(a)$ we have 
 \beqsn
 \varphi^*_\omega(x)\geq xr\left(\log\frac{xr}{D}-1\right)-c,
 \qquad\mbox{for}\ x\geq\frac D r,
 \eeqsn
and hence
\beqsn
\alpha^{r\alpha}\leq e^{c/\lambda}\left(\frac{eD}{\lambda r}\right)^{r|\alpha|}W^{(\lambda)}_\alpha
\eeqsn
and \eqref{56G} is satisfied by the arbitrariness of $D>0$.

Conversely, if \eqref{56G} holds then, proceeding as in $(a)$, we have that for
every $\lambda,D>0$ there exists $C>0$ such that \eqref{C31} is valid and therefore,
by Lemma~\ref{lemma52G}, $\omega(z)=o(|z|^{1/r})$ as  $|z|\to+\infty$ for
$z\in\R^d$,
or, equivalently, $\omega(t)=o(t^{1/r})$ as  $t\to+\infty$.
\end{proof}

\begin{Cor}
\label{remopiccolo}
Let $\omega$ be a weight function. We have:
\begin{enumerate}
\item[(a)] The Hermite functions belong to  $\Sch_{\{\omega\}}(\R^d)$ if and only if $\omega(t)=O(t^2)$ as $t\to+\infty$.

\item[(b)] The Hermite functions belong to   $\Sch_{(\omega)}(\R^d)$ if and only if $\omega(t)=o(t^2)$ as $t\to+\infty$.
\end{enumerate}
\end{Cor}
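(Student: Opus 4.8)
The plan is to reduce the statement to the characterizations already available in Proposition~\ref{lemma37G} and Lemma~\ref{lemma51G}, with no new estimates needed. First I would recall that, by Proposition~\ref{lemma514RS}, we have the topological identifications $\Sch_{\{\omega\}}(\R^d)=\Sch_{\{\M_\omega\}}$ and $\Sch_{(\omega)}(\R^d)=\Sch_{(\M_\omega)}$, where $\M_\omega$ is the weight matrix from \eqref{Wlambda}, \eqref{calMomega}. Moreover, by Lemma~\ref{fromRS}(vi) and (vii), the matrix $\M_\omega$ satisfies \eqref{M2'R} and \eqref{37LR} in the Roumieu setting, and \eqref{M2'B} and \eqref{37LB} in the Beurling setting. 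Hence all the hypotheses needed to invoke Proposition~\ref{lemma37G} for $\M_\omega$ are in force.

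For part (a), I would use the equivalence $(a)\Leftrightarrow(c)$ of Proposition~\ref{lemma37G} applied to $\M_\omega$: all Hermite functions belong to $\Sch_{\{\M_\omega\}}$ if and only if there exist $\lambda>0$ and $C,C_1>0$ with $\alpha^{\alpha/2}\leq C_1 C^{|\alpha|}W^{(\lambda)}_\alpha$ for every $\alpha\in\N_0^d$. This is exactly condition \eqref{55G} of Lemma~\ref{lemma51G} with $r=1/2$, in its ``$\exists\,\lambda$'' formulation (the multiplicative constant $C_1$ playing the role of $C$ in \eqref{55G}, and the base $C$ that of $D$). By Lemma~\ref{lemma51G}(a), together with the closing remark there that ``$\forall\,\lambda$'' may be replaced by ``$\exists\,\lambda$'', this condition is equivalent to $\omega(t)=O(t^{2})$ as $t\to+\infty$, which proves (a).

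For part (b), I would argue symmetrically using the equivalence $(a)'\Leftrightarrow(c)'$ of Proposition~\ref{lemma37G}: all Hermite functions belong to $\Sch_{(\M_\omega)}$ if and only if for all $\lambda,C>0$ there is $C_1>0$ with $\alpha^{\alpha/2}\leq C_1 C^{|\alpha|}W^{(\lambda)}_\alpha$ for every $\alpha$. This is precisely condition \eqref{56G} of Lemma~\ref{lemma51G} with $r=1/2$, and by Lemma~\ref{lemma51G}(b) it is equivalent to $\omega(t)=o(t^{2})$ as $t\to+\infty$, giving (b).

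The only point requiring care is the bookkeeping of quantifiers on $\lambda$: in (a) the existential $\lambda$ in condition $(a)$ of Proposition~\ref{lemma37G} must be matched with the ``$\exists\,\lambda$'' variant of \eqref{55G}, whereas in (b) the universal $\lambda$ (and $C$) of condition $(a)'$ matches the stated ``$\forall\,\lambda$'' form of \eqref{56G} directly. Since the translation between the growth rate of $\omega$ and the size of the entries $W^{(\lambda)}_\alpha$ has already been carried out in Lemma~\ref{lemma51G}, this matching is the whole content of the argument and no further computation is needed.
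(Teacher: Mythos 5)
Your proposal is correct and takes essentially the same route as the paper: the paper's proof also combines Lemma~\ref{fromRS}, Lemma~\ref{lemma51G} with $r=1/2$, and Proposition~\ref{lemma37G} applied to $\M_\omega$, merely phrasing the chain of equivalences through conditions \eqref{12L2R} and \eqref{12L2B} (i.e.\ $(b)$/$(b)'$ of Proposition~\ref{lemma37G}) instead of going directly via $(a)\Leftrightarrow(c)$. Your explicit citation of Proposition~\ref{lemma514RS} for the identification $\Sch_{\{\omega\}}(\R^d)=\Sch_{\{\M_\omega\}}$, $\Sch_{(\omega)}(\R^d)=\Sch_{(\M_\omega)}$, and your bookkeeping of the quantifiers on $\lambda$, only spell out what the paper leaves implicit.
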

\begin{proof}
By Lemmas~\ref{lemma51G} and \ref{fromRS} and Proposition~\ref{lemma37G}, $\omega(t)=O(t^2)$ as $t\to+\infty$ if and only if  $\M_\omega$ satisfies \eqref{12L2R} if and only if the space $\Sch_{\{\M_\omega\}}$ contains the Hermite functions; while
$\omega(t)=o(t^2)$ as $t\to+\infty$ if and only if $\M_\omega$ satisfies \eqref{12L2B} if and only if  $\Sch_{(\M_\omega)}$ contains the Hermite functions.
\end{proof}

For a weight function $\omega$ we now consider the sequence spaces
\beqsn
&&\Lambda_{\{\omega\}}:=\{\bfc=(c_\alpha)\in \C^{\N_0^d}:\ \exists\, j\in\N,\ \
\|\bfc\|_{\omega,j}:=\sup_{\alpha\in\N^d_0}|c_\alpha|e^{\frac{1}{j}\omega(\alpha^{1/2}/ j)}
<+\infty\},\\
&&\Lambda_{(\omega)}:=\{\bfc=(c_\alpha)\in \C^{\N_0^d}:\ \forall\,j\in\N,\
\
\|\bfc\|_{\omega,1/j}=\sup_{\alpha\in\N^d_0}|c_\alpha|e^{j\omega(\alpha^{1/2} j)}<+\infty\}.
\eeqsn

\begin{Prop}
\label{lemma53G}
Let $\omega$ be a weight function and $\M_\omega$ the weight matrix defined by
\eqref{Wlambda}, \eqref{calMomega}. Then $\Lambda_{\{\omega\}}=\Lambda_{\{\M_\omega\}}$ and $\Lambda_{(\omega)}=\Lambda_{(\M_\omega)}$
and the equalities are also topological.
\end{Prop}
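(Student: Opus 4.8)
The plan is to deduce everything from the two-sided comparison in Lemma~\ref{lemma52G}, namely $\lambda\,\omega_{\bfW^{(\lambda)}}(t)\le\omega(t)\le B\lambda\,\omega_{\bfW^{(\lambda)}}(t)+C_\lambda$, whose crucial feature is that the constant $B$ does \emph{not} depend on $\lambda$ (only $C_\lambda$ does). Writing the defining seminorms in the common shape $\|\bfc\|_{\omega,h}=\sup_\alpha|c_\alpha|e^{\frac1h\omega(\alpha^{1/2}/h)}$ and $\|\bfc\|_{\bfW^{(\lambda)},h}=\sup_\alpha|c_\alpha|e^{\omega_{\bfW^{(\lambda)}}(\alpha^{1/2}/h)}$ (so that the Roumieu data correspond to $h=j$ and the Beurling data to $h=1/j$), the whole statement reduces to showing that the two systems $\{\|\cdot\|_{\omega,h}\}$ and $\{\|\cdot\|_{\bfW^{(h)},h}\}$ are equivalent, each seminorm of one being dominated by some seminorm of the other. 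This simultaneously yields the set-theoretic equalities and the topological ones, since it makes the identity a continuous bijection with continuous inverse between the two inductive (resp. projective) limits; cofinality of the index subfamilies $h=j$ and $h=1/j$ is handled by the monotonicity of $h\mapsto\omega_{\bfW^{(\cdot)}}$.

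First I would settle the ``easy'' inclusions $\Lambda_{\{\omega\}}\subseteq\Lambda_{\{\M_\omega\}}$ and $\Lambda_{(\omega)}\subseteq\Lambda_{(\M_\omega)}$. Specialising Lemma~\ref{lemma52G} to $\lambda=h$ and $t=\alpha^{1/2}/h$, the left inequality gives $\omega_{\bfW^{(h)}}(\alpha^{1/2}/h)\le\frac1h\omega(\alpha^{1/2}/h)$, whence $\|\bfc\|_{\bfW^{(h)},h}\le\|\bfc\|_{\omega,h}$ for every $h>0$. Read with the same index on both sides, this inequality immediately gives both forward inclusions and the continuity of the identity from the $\omega$-space into the matrix space.

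The reverse inclusions are where I expect the factor $B$ to be the main obstacle. Here I would lower-bound a matrix seminorm by the right inequality of Lemma~\ref{lemma52G}, in the form $\omega_{\bfW^{(\mu)}}(\alpha^{1/2}/\mu)\ge\frac{1}{B\mu}\omega(\alpha^{1/2}/\mu)-\frac{C_\mu}{B\mu}$, which reduces the task to an estimate involving only $\omega$: one must bound $\frac1h\omega(\alpha^{1/2}/h)$ by $\frac{1}{B\mu}\omega(\alpha^{1/2}/\mu)$ up to an additive constant, for a suitably chosen index $\mu$. The point is that the unwanted factor $B$ can be absorbed into a shift of the index together with the monotonicity of $\omega$: in the Roumieu case one keeps $\mu=j_0$ fixed and chooses the target index $\ell_0\ge Bj_0$, so that $\frac{1}{\ell_0}\le\frac{1}{Bj_0}$ and $\omega(\alpha^{1/2}/\ell_0)\le\omega(\alpha^{1/2}/j_0)$; in the Beurling case, given $h$, one picks a smaller matrix index $\mu=h'\le h/B$, so that $\frac{1}{Bh'}\ge\frac1h$ and $\omega(\alpha^{1/2}/h')\ge\omega(\alpha^{1/2}/h)$. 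Either way the required inequality $\frac1h\omega(\alpha^{1/2}/h)\le\omega_{\bfW^{(\mu)}}(\alpha^{1/2}/\mu)+\mathrm{const}$ follows, yielding $\Lambda_{\{\M_\omega\}}\subseteq\Lambda_{\{\omega\}}$ and $\Lambda_{(\M_\omega)}\subseteq\Lambda_{(\omega)}$ together with continuity of the inverse identity. Since $B$ depends only on the dimension (it equals $2D_d$ in Lemma~\ref{lemma52G}), these index shifts are harmless, and collecting the four inclusions with their accompanying norm estimates gives the asserted topological equalities.
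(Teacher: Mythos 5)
Your proof is correct and takes essentially the same route as the paper: both arguments rest entirely on Lemma~\ref{lemma52G}, using its left inequality for the easy inclusions and its right inequality combined with an index shift by the $\lambda$-independent constant $B$ (plus monotonicity) for the reverse ones. The only cosmetic difference is that the paper applies the right inequality at the shifted point $\alpha^{1/2}/(Bj)$ and then uses monotonicity of $\omega_{\bfW^{(j)}}$, whereas you apply it at $\alpha^{1/2}/j$ and use monotonicity of $\omega$ itself.
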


\begin{proof}
From Lemma~\ref{lemma52G} with $\lambda=j$ (and taking $B\in\N$), we have
\beqsn
e^{\frac{1}{Bj}\omega\left(\frac{\alpha^{1/2}}{Bj}\right)}
\leq
e^{\omega_{\bfW^{(j)}}\left(\frac{\alpha^{1/2}}{Bj}\right)+\frac{C_j}{Bj}}
\leq e^{\frac{C_j}{Bj}}e^{\omega_{\bfW^{(j)}}({\alpha^{1/2}}/{j})}
\eeqsn
and, conversely, $e^{\omega_{\bfW^{(j)}}({\alpha^{1/2}}/{j})}\leq e^{\frac1j\omega(\alpha^{1/2}/j)}.$ 
This proves the Roumieu case. Taking 
$\lambda=1/j$ we prove analogously the Beurling case.
\end{proof}
We now easily deduce the following consequence of Theorem~\ref{th41G}.
\begin{Cor}
\label{th54G}
Let $\omega$ be a weight function.
The Hermite
functions are an absolute Schauder basis in $\Sch_{\{\omega\}}(\R^d)$ and
\beqsn
T:\ \Sch_{\{\omega\}}(\R^d)&&\longrightarrow \Lambda_{\{\omega\}}\\
f&&\longmapsto (\xi_\gamma(f))_{\gamma\in\N_0}
\eeqsn
defines an isomorphism.

If moreover $\omega(t)=o(t^2)$ as $t\to+\infty$, then
the Hermite
functions are an absolute Schauder basis  in $\Sch_{(\omega)}(\R^d)$ and
\beqsn
T:\ \Sch_{(\omega)}(\R^d)\longrightarrow \Lambda_{(\omega)}
\eeqsn
as defined above is also an isomorphism.
\end{Cor}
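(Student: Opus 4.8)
The plan is to apply Theorem~\ref{th41G} directly to the weight matrix $\M_\omega$ of \eqref{Wlambda}, \eqref{calMomega} and then transport the conclusion into the $\omega$-language through the identifications already established. The only genuine work is to check that $\M_\omega$ meets the hypotheses of Theorem~\ref{th41G}; once that is done, both the Schauder-basis and the isomorphism assertions are immediate.

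For the Roumieu assertion I would first note that, by Lemma~\ref{fromRS}(vi), the matrix $\M_\omega$ satisfies \eqref{M2'R}, so it remains only to verify \eqref{12L2R}. Here I would use that every weight function obeys $(\beta)$, that is $\omega(t)=O(t^2)$. Taking $r=1/2$ in Lemma~\ref{lemma51G}(a) --- so that $t^{1/r}=t^2$ and $\alpha^{r\alpha}=\alpha^{\alpha/2}$ --- this growth bound is equivalent to \eqref{55G}, which is precisely condition $(a)$ of Proposition~\ref{lemma37G}. Since Lemma~\ref{fromRS}(vi),(vii) supply the hypotheses \eqref{M2'R} and \eqref{37LR} demanded there, the equivalence $(a)\Leftrightarrow(b)$ of Proposition~\ref{lemma37G} yields exactly \eqref{12L2R} for $\M_\omega$. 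This is the same chain already recorded in the proof of Corollary~\ref{remopiccolo}.

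With \eqref{12L2R} and \eqref{M2'R} in hand, Theorem~\ref{th41G} applies: the Hermite functions form an absolute Schauder basis of $\Sch_{\{\M_\omega\}}$ and $T\colon\Sch_{\{\M_\omega\}}\to\Lambda_{\{\M_\omega\}}$ is a topological isomorphism. It then remains only to replace the matrix spaces by the $\omega$-spaces. By Proposition~\ref{lemma514RS} one has the topological identity $\Sch_{\{\M_\omega\}}=\Sch_{\{\omega\}}(\R^d)$, and by Proposition~\ref{lemma53G} the topological identity $\Lambda_{\{\M_\omega\}}=\Lambda_{\{\omega\}}$; under these identifications the operator $T$ is unchanged, since the coefficients $\xi_\gamma(f)=\int_{\R^d}f H_\gamma$ are literally the same. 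This gives the Roumieu statement.

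The Beurling case is entirely parallel, the only change being that one now invokes the stronger hypothesis $\omega(t)=o(t^2)$. Lemma~\ref{fromRS}(vi) again gives \eqref{M2'B}, while applying Lemma~\ref{lemma51G}(b) with $r=1/2$ turns $\omega(t)=o(t^2)$ into \eqref{56G}, i.e. condition $(a)'$ of Proposition~\ref{lemma37G}, whose equivalence $(a)'\Leftrightarrow(b)'$ (valid by Lemma~\ref{fromRS}(vi),(vii)) produces \eqref{12L2B}. Theorem~\ref{th41G} then delivers the Schauder basis and the isomorphism $T\colon\Sch_{(\M_\omega)}\to\Lambda_{(\M_\omega)}$, and Propositions~\ref{lemma514RS} and~\ref{lemma53G} rewrite these as $\Sch_{(\omega)}(\R^d)$ and $\Lambda_{(\omega)}$. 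Since all the hard analysis has been carried out already in Theorem~\ref{th41G}, I do not expect any real obstacle; the one point demanding care is the matching of the exponent --- recognizing that $r=1/2$ is exactly what aligns the quadratic growth of $\omega$ with the factor $\alpha^{\alpha/2}$ appearing in \eqref{12L2R}, \eqref{12L2B} and in Proposition~\ref{lemma37G}.
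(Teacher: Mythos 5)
Your proposal is correct and takes essentially the same route as the paper: the corollary is deduced from Theorem~\ref{th41G} applied to $\M_\omega$, with the hypotheses \eqref{12L2R} (resp.\ \eqref{12L2B}) verified through Lemma~\ref{fromRS}, Lemma~\ref{lemma51G} with $r=1/2$, and the equivalences of Proposition~\ref{lemma37G} --- precisely the chain the paper records in Corollary~\ref{remopiccolo}. The final translation to the $\omega$-spaces via the topological identifications of Propositions~\ref{lemma514RS} and \ref{lemma53G} is also exactly how the paper concludes.
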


We finally have
\begin{Cor}
\label{cor56G}
If $\omega$ is a weight function, then
$\Sch_{\{\omega\}}(\R^d)$ is nuclear. If moreover $\omega(t)=o(t^2)$ as
$t\to+\infty$, then
$\Sch_{(\omega)}(\R^d)$ is nuclear. 
\end{Cor}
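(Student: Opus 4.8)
The plan is to deduce both statements from the corresponding results for the weight matrix $\M_\omega$ defined in \eqref{Wlambda}, \eqref{calMomega}, using the topological identifications $\Sch_{\{\M_\omega\}}=\Sch_{\{\omega\}}(\R^d)$ and $\Sch_{(\M_\omega)}=\Sch_{(\omega)}(\R^d)$ from Proposition~\ref{lemma514RS}. Since Corollary~\ref{lemma57Gend} already gives nuclearity of $\Sch_{\{\M\}}$ (resp. $\Sch_{(\M)}$) whenever the matrix satisfies \eqref{12L2R} and \eqref{M2'R} (resp. \eqref{12L2B} and \eqref{M2'B}), the whole task reduces to checking that $\M_\omega$ meets these hypotheses, and then transporting the conclusion back to the function spaces.

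First I would invoke Lemma~\ref{fromRS}: parts (vi) and (vii) guarantee that $\M_\omega$ satisfies \eqref{M2'R} and \eqref{M2'B} as well as \eqref{37LR} and \eqref{37LB} (with $\kappa=\lambda$, $A=1$), for every weight function $\omega$. Thus the derivation-closedness and the \eqref{37L}-type conditions required by Corollary~\ref{lemma57Gend}, and needed to trigger the equivalences of Proposition~\ref{lemma37G}, are available for free. What remains is to produce the mixed conditions \eqref{12L2R} and \eqref{12L2B}, and for this I would use Proposition~\ref{lemma37G}, whose equivalences $(a)\Leftrightarrow(b)$ and $(a)'\Leftrightarrow(b)'$ are applicable precisely because \eqref{M2'R}, \eqref{37LR} (resp. \eqref{M2'B}, \eqref{37LB}) already hold for $\M_\omega$. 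It then suffices to verify condition $(a)$ (resp. $(a)'$), i.e. a lower bound of the form $\alpha^{\alpha/2}\leq C_1C^{|\alpha|}W^{(\lambda)}_\alpha$. This is exactly what Lemma~\ref{lemma51G} provides with the choice $r=1/2$, since then $\alpha^{r\alpha}=\alpha^{\alpha/2}$ and $t^{1/r}=t^2$: part (a) of that lemma (in the ``$\exists\,\lambda$'' form) translates $\omega(t)=O(t^2)$ into \eqref{55G}, which is condition $(a)$ of Proposition~\ref{lemma37G}, and part (b) translates $\omega(t)=o(t^2)$ into \eqref{56G}, which is condition $(a)'$.

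Finally, I would close the argument by feeding in the growth hypotheses on $\omega$. For the Roumieu class, condition $(\beta)$ of Definition~\ref{defomega} already states $\omega(t)=O(t^2)$, so \eqref{12L2R} holds for \emph{every} weight function, and Corollary~\ref{lemma57Gend} yields nuclearity of $\Sch_{\{\M_\omega\}}=\Sch_{\{\omega\}}(\R^d)$ unconditionally. For the Beurling class, the stronger assumption $\omega(t)=o(t^2)$ delivers \eqref{12L2B} via Lemma~\ref{lemma51G}(b) and Proposition~\ref{lemma37G}, whence Corollary~\ref{lemma57Gend} gives nuclearity of $\Sch_{(\M_\omega)}=\Sch_{(\omega)}(\R^d)$. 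Because each step is a direct appeal to a previously proved statement, I do not expect a genuine obstacle; the only point requiring care is bookkeeping the quantifier structure of the mixed conditions (the ``$\exists\,\lambda$'' versus ``$\forall\,\lambda$'' variants, and the order in which the constants $C$ and $C_1$ are quantified) so that Lemma~\ref{lemma51G} is applied in the matching form, together with the observation that $(\beta)$ is built into the very definition of a weight function, which is what makes the Roumieu case hold without an extra hypothesis while the Beurling case needs the sharper $o(t^2)$.
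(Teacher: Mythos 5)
Your proof is correct and follows essentially the same route the paper intends: identify $\Sch_{[\omega]}(\R^d)$ with $\Sch_{[\M_\omega]}$ via Proposition~\ref{lemma514RS}, verify the hypotheses of Corollary~\ref{lemma57Gend} for $\M_\omega$ using Lemma~\ref{fromRS}(vi)--(vii) together with Lemma~\ref{lemma51G} (with $r=1/2$) and Proposition~\ref{lemma37G}, noting that condition $(\beta)$ makes the Roumieu case unconditional while the Beurling case requires $\omega(t)=o(t^2)$. The quantifier bookkeeping you flag is exactly right and matches the paper's chain of results.
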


%Only in the Beurling setting we can consider the weaker condition $\log t=O(\omega(t))$ as $t\to+\infty$ instead of $(\gamma)$ of Definition~\ref{defomega} and the previous results are still valid. In particular, our results apply for the Schwartz class $\Sch(\R^d)$ since  it coincides with $\Sch_{(\omega)}(\R^d)$ when $\omega(t)=\log(1+t)$.

%\begin{proof}
%By Corollary~\ref{th54G} the space $\Sch_{(\omega)}(\R^d)$ is isomorphic to
%$\Lambda_{(\omega)}$, and $\Sch_{\{\omega\}}(\R^d)$ is isomorphic to
%$\Lambda_{\{\omega\}}$. The thesis thus follows from Proposition~\ref{lemma53G},
%Theorem~\ref{thm55G} and Theorem~\ref{thm55G-R} since $\M_\omega$ satisfies \eqref{M2'R} and \eqref{M2'B} by Lemma~\ref{fromRS}(vi).
%\end{proof}

%\begin{Rem}
%\begin{em}
%Note that, from Corollary~\ref{lemma57Gend} \red{(or Corollary~\ref{lemma57Gend2})}
%and Theorem~\ref{cor56G} the space
%$\Sch_{(\omega)}(\R)$ is nuclear for every weight function $\omega$ with $\omega(t)
%=o(t^2)$, while $\Sch_{(\M)}$ is nuclear if and only if condition $(b)$ of
%Proposition~\ref{lemma57G} \red{(or may be \eqref{M2'B}?)} is satisfied. This result should
%be compared with \cite{BJOS}.
%\end{em}
%\end{Rem}

\vskip\baselineskip
%\vspace*{10mm}
{\bf Acknowledgments.}
The first three authors were partially supported by  the Project FFABR 2017 (MIUR),
and by the Projects FIR 2018, FAR 2018 and FAR 2019 (University of Ferrara).
The first and third  authors are members of the Gruppo Nazionale per l'Analisi
Matematica, la Probabilit\`a e le loro Applicazioni (GNAMPA) of the Istituto Nazionale di Alta
Matematica (INdAM). The research of the second author was partially supported by the project MTM2016-76647-P and the grant BEST/2019/172 from Generalitat Valenciana. The fourth author is supported by FWF-project J 3948-N35 and FWF-project P32905.

%\red{(I took this from the previous paper; to be updated)}

\end{document}